\newenvironment{quoting}{%
    \list{}{\leftmargin 1.6em \rightmargin\leftmargin}
    \item\relax} {\endlist}
\newtheorem{theorem}{Theorem}
\newtheorem{lemma}{Lemma}[section]
\theoremstyle{remark}
\newtheorem*{remark}{Remark}
\newtheorem*{question}{Question}
\newcommand{\sT}{T} 
\newcommand{\abs}[1]{\left|#1\right|} 
\newcommand{\id}{\operatorname{id}} 
\newcommand{\cC}{\mathcal{C}} 
\newcommand{\cM}{\mathcal{M}} 
\newcommand{\cK}{\mathcal{K}} 
\newcommand{\bC}{\mathbb{C}} 
\newcommand{\cL}{\mathcal{L}} 
\newcommand{\cI}{\mathcal{I}} 
\newcommand{\bN}{\mathbb{N}} 
\newcommand{\norm}[1]{\left\|#1\right\|} 
\newcommand{\fA}{\mathfrak{A}} 
\newcommand{\fB}{\mathfrak{B}} 
\newcommand{\fJ}{\mathfrak{J}} 
\newcommand{\fC}{\mathfrak{C}} 
\newcommand{\snorm}[1]{\left\|#1\right\|_{\fB}} 
\newcommand{\characteristic}[1]{\mathbf{1}_{#1}} 
\newcommand{\SBV}{\operatorname{SBV}} 
\newcommand{\BV}{\operatorname{BV}} 
\newcommand{\Spec}[1]{\operatorname{Spec}(#1)} 
\newcommand{\FullSpec}[1]{\normalfont\operatorname{\underbar{Spec}}(#1)} 
\newcommand{\EssSpec}[1]{\operatorname{EssSpec}(#1)} 
\newcommand{\FullEssSpec}[1]{\normalfont\operatorname{\underbar{EssSpec}}(#1)} 
\newcommand{\jump}[2]{J\left (#1,#2 \right)} 
\newcommand{\cball}[1]{\{\abs{\lambda} \leq #1\}} 
\newcommand{\dD}{\operatorname{D}} 
\newcommand{\apart}{\operatorname{D}_{\operatorname{a}}} 
\newcommand{\jpart}{\operatorname{D}_{\operatorname{j}}} 
\title{Discontinuities cause essential spectrum}
\author[O.~Butterley]{Oliver Butterley}
\address{(Oliver Butterley) Dipartimento di Matematica, II Università di Roma ``Tor Vergata'' -- Via della Ricerca Scientifica 1 -- 00133 Roma -- Italy}
\email{butterley@mat.uniroma2.it}
\author[G.~Canestrari]{Giovanni Canestrari}
\address{(Giovanni Canestrari) Dipartimento di Matematica, II Università di Roma ``Tor Vergata'' -- Via della Ricerca Scientifica 1 -- 00133 Roma -- Italy}
\email{canestrari@mat.uniroma2.it}
\author[S.~Jain]{Sakshi Jain}
\address{(Sakshi Jain) Dipartimento di Matematica, II Università di Roma ``Tor Vergata'' -- Via della Ricerca Scientifica 1 -- 00133 Roma -- Italy}
\email{jain@mat.uniroma2.it}
\begin{document}

\begin{abstract}
    We study transfer operators associated to piecewise monotone interval transformations and show that the essential spectrum is large whenever the Banach space bounds \(L^\infty\) and the transformation fails to be Markov.
    Constructing a family of Banach spaces we show that the lower bound on the essential spectral radius is optimal. 
    Indeed, these Banach spaces realise an essential spectral radius as close as desired to the theoretical best possible case.
\end{abstract}

\maketitle
\thispagestyle{empty}

\section{Introduction}%
\label{sec:intro}

Describing the statistical properties of a hyperbolic dynamical system is an important question and one powerful approach to this problem is to study the associated transfer operator acting on a ``good'' choice of Banach space (see \cite{Liverani04} and references within).
In recent decades these techniques have seen massive development
and many systems were studied in this way with valuable results being obtained~\cite{Keller89}.
Results include statistical properties (invariant measures, CLT, LLT, decay of correlations, large deviations, zeta functions, etc.), stability of these properties under perturbations~\cite{KL99} and rigorous numerics~\cite{Liverani01}.
Different Banach spaces were used in various applications and often it was beneficial to engineer appropriate Banach spaces.
At present we have, a ``bestiary'' \cite{Smania21} of available spaces, each with benefits for different settings and purposes.

Arguably, particularly with physical applications in mind, discontinuities are natural.
For example, in billiard systems discontinuities arise from tangential collisions~\cite{CM06} and in Lorenz-like flows discontinuities arise from shearing close to the singularity~\cite{Viana00}.
A weight of evidence suggests that discontinuities don't worsen the statistical properties of the system even if they cause technical difficulties (e.g., \cite{AM16,BDL18}).

Let \(\cL : \fB \to \fB\) be a transfer operator associated to some dynamical system.
Since the Banach space must contain the observables of interest, we take the point of view that, at the very least, it includes \(\cC^{\infty}\).
Consequently, it also contains \(\cL^{n}(\cC^{\infty})\) for all \(n\in \bN\)
and so discontinuities of the underlying system force the Banach space to contain observables with discontinuities.
On the other hand, if the space is big, the spectrum can be huge and consequently useless (further details in the following section).
The aim is to choose a Banach space for which the essential spectrum is as small as possible.
Once this is done, one may proceed to determine the point spectrum or ``resonances'' of the system  (see, e.g., \cite{BKL22,FGL19} and references within).
Moreover the point spectrum is essentially independent of the choice of Banach space, it is inherent in the system studied~\cite{BT08}.

Our present focus is piecewise monotone interval transformations.
Classically these are studied using \(\BV\) and the essential spectral radius is equal to a value depending on the expansion~\cite{Keller84}.
However, for \(C^{\infty}\) systems without discontinuities, the essential spectral radius can be made as small as desired by choosing a Banach space of smoother observables~\cite{CI91} and so we ask ourselves:

\begin{question}
    Does there exist a Banach space for studying piecewise monotone interval transformations on which the essential spectrum is as small as desired?
\end{question}

Somewhat unfortunately, one success of this present work is to answer this question in the negative when the transformation fails to be Markov.\footnote{A piecewise monotone transformation \(\sT:[0,1] \to [0,1]\) is said to be \emph{Markov} if it admits a finite partition \(\Omega\) which additionally has the property that, for any \(\omega,\omega^{\prime}\in \Omega\), if \(\sT \omega \cap \omega^{\prime} \neq \emptyset\) then \(\sT \omega \supseteq \omega^{\prime}\).}
In Theorem~\ref{thm:lower}, we show the existence of essential spectrum caused by the discontinuities of the transformation and which cannot be avoided by choice of Banach space.
The key idea is to construct eigenvalues of the dual operator in a way that relies on the discontinuities. 
Consequently this component of the spectrum has a different origin compared to the essential spectrum studied previously~\cite{CI91} and which is due to limited differentiability.
This part of the work gives a lower estimate on the essential spectral radius which holds for any Banach space.
In Theorem~\ref{thm:example} we construct an example of a piecewise expanding interval transformation for which this theoretical lower estimate is strictly smaller than the essential spectral radius studying the same system on \(\BV\)~\cite{Keller84}.
Consequently the natural next question is:

\begin{question}
    Given a piecewise monotone interval transformation which fails to be Markov, does there exist a Banach space on which the essential spectral radius is smaller than for \(\BV\), closer to the theoretical best?
\end{question}

In Theorem~\ref{thm:custom} we answer this question in the affirmative by demonstrating how, for any given expanding interval transformation, it is possible to construct a Banach space so that the essential spectral radius is arbitrarily close to the theoretical best case we have established in Theorem~\ref{thm:lower}.

Intimately connected to the present work and of physical relevance is the task of understanding when the correlations of \(C^{\infty}\) observables admit a description in terms of resonances (see definition and discussion in \cite[\S 1]{FGL19}).
It is known (see \cite{Ruelle90,BT07,GL08,FGL19} and citations within) that all uniformly expanding and uniformly hyperbolic \(\cC^{\infty}\) maps (in any dimension) admit Ruelle resonances to arbitrary precision.
This includes Markov expanding systems and pseudo Anosov diffeomorphisms, which are \(\cC^{\infty}\) in the sense that, apart from a set of discontinuities/singularities, they are \(\cC^{\infty}\) and this set is both invariant under the dynamics and has zero measure.
Essentially all proofs of resonances rely on spectral considerations for the transfer operator. 
As such, the present work demonstrates that discontinuities are an obstacle to the precision with which the correlations of observables can be described by resonances.

\section{Results}%
\label{sec:results}

We say that \(\sT:[0,1] \to [0,1]\) is a \(\cC^{r}\) \emph{piecewise monotone transformation} if there exists a finite partition \(\Omega\) of a full measure subset of \([0,1]\) into open intervals such that \(\left.\sT\right|_{\omega}\) is strictly monotone and \(r\)-times differentiable with continuous derivative for each \(\omega \in \Omega\). 
Let \(\cI\) denote the disjoint union of \(\overline{\omega}\), \(\omega\in\Omega\)
\footnote{In the rest of the article,
    it will be convenient to see \(T\) as a transformation from \(\mathcal{I}\) to \(\mathcal{I}\) rather than
    from \(I\) to \(I\). For each \(\omega\in \Omega\), \(T\) has a unique \(\cC^{0}\) extension
    to \(\overline{\omega}\). In such a way, \(T\) can be considered as a transformation from \(\mathcal{I}\) to itself.}.
We suppose throughout that \(\Omega\) denotes the maximal such partition.
The \emph{transfer operator} is defined\footnote{If \(\varphi\) is constant, or if \(\varphi = 1/{\abs{\sT^{\prime}}}\) then \(\cL_{\varphi}\) is the operator associated to the measure of maximal entropy or the SRB measure, respectively (see e.g.,~\cite{Baladi00}).} pointwise for any \(h: [0,1] \to \bC\),
\begin{equation}
    \label{eq:L-defin}
    \cL_{\varphi} h = \sum_{\omega \in \Omega} (\varphi \cdot h) \circ {{\left.\sT\right|}_{\omega}^{-1}} \cdot \characteristic{\sT\omega}.
\end{equation}
Note that \(\cL_{\varphi}\) doesn't necessarily leave\footnote{\(h\in\cC^\infty(\cI)\) means that \(h:[0,1]\to \bC\) and that \(\left.h\right|_{\omega}\in\cC^\infty(\omega)\) for each \(\omega\in \Omega\).} \(\cC^\infty(\cI)\) invariant, particularly in the cases of present interest.
We say that \(a \in \mathcal{I}\) is a \emph{non-trivial} discontinuity of \(\sT\) if \(a \in \partial \omega\), \(\omega \in \Omega\),
and \({\{\sT^k a\}}_{k=1}^{\infty}\) is an infinite set.
For each such non-trivial discontinuity of \(\sT\) we consider the quantities
\[
    \begin{aligned}
        \Lambda^{\inf}({\sT,\varphi,a})
        &= \displaystyle\liminf_{n\to \infty}\textstyle \abs{\prod_{k=1}^{n}\varphi(\sT^k a)}^{\frac{1}{n}}, \\ 
        \Lambda^{\sup}({\sT,\varphi,a})
        &= \displaystyle\limsup_{n\to \infty}\textstyle \abs{\prod_{k=1}^{n}\varphi(\sT^k a)}^{\frac{1}{n}}.    
    \end{aligned}
\]
A given transformation might have more than one non-trivial discontinuity but, at most, a finite number.
We define \(\Lambda^{\inf}(\sT,\varphi)\) and \(\Lambda^{\sup}(\sT,\varphi)\) to be equal to the maximum of \(\Lambda^{\inf}({\sT,\varphi,a})\) and \(\Lambda^{\sup}({\sT,\varphi,a})\), over all non-trivial discontinuities, respectively.
If \(\sT\) has no non-trivial discontinuities, i.e., is Markov,\footnote{Markov implies the set of all images of discontinuities is a finite set and so there is no non-trivial discontinuity. On the other hand, no non-trivial discontinuities means the set of all images of discontinuities is a finite set and this set describes a Markov partition.} we set \(\Lambda^{\inf}(\sT,\varphi)\) and \(\Lambda^{\sup}(\sT,\varphi)\)  equal to zero.
Whenever it happens that \(\Lambda^{\inf}(\sT,\varphi) = \Lambda^{\sup}(\sT,\varphi)\), we set \(\Lambda^{\inf}(\sT,\varphi) = \Lambda^{\sup}(\sT,\varphi) = \Lambda(\sT,\varphi)\).

Our first result gives a lower bound for the essential spectral radius.

\begin{theorem}%
    \label{thm:lower}
    Let \(\cL_{\varphi}\) be the transfer operator associated to a \(\cC^{0}\)
    piecewise monotone transformation \(\sT:[0,1] \to [0,1]\) and let \(\varphi:[0,1]\to \bC\) be bounded\footnote{In particular, \(\varphi\) admits a continuous extension up to \(\overline{\omega}\), for all \(\omega \in \Omega\).} and \(\cC^{0}\) on each \(\omega \in \Omega\).
    Suppose that \((\fB,\snorm{\cdot})\) is a Banach space satisfying,
    \begin{enumerate}
        \item \(\cC^\infty(\cI) \subseteq \fB\);
        \item \(\cL_{\varphi}\) extends to a continuous operator on \(\fB\);
        \item \label{itm:infty}\({\norm{h}}_{L^\infty} \le \snorm{h}\) for all \(h\in \bigcup_{n=0}^{\infty} \cL^{n}_{\varphi}(\cC^{\infty}(\cI))\).
    \end{enumerate}
    If \(\sT\) has a non-trivial discontinuity \(a \in [0,1]\) then the essential spectral radius of \(\cL_{\varphi} : \fB \rightarrow \fB\) is not less than \(\Lambda^{\inf}(\sT,\varphi, a)\).
\end{theorem}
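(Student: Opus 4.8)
The plan is to construct, for each $n$, a bounded linear functional on $\fB$ that is ``almost'' an eigenfunctional of the dual operator $\cL_\varphi^*$ with eigenvalue related to the products $\prod_{k=1}^n \varphi(\sT^k a)$, and then to invoke the characterisation of the essential spectral radius of $\cL_\varphi$ as the infimum of the operator norms of $\cL_\varphi - K$ over compact $K$, equivalently the fact that the spectrum outside the essential spectral radius consists of isolated eigenvalues of finite multiplicity. Concretely, fix the non-trivial discontinuity $a$. The key observation is that the one-sided evaluation functional $\ell_a^{\pm} : h \mapsto \lim_{x \to a^{\pm}} h(x)$, restricted to $\bigcup_n \cL_\varphi^n(\cC^\infty(\cI))$, is well-defined and bounded by $\norm{h}_{L^\infty} \le \snorm{h}$ thanks to hypothesis~\eqref{itm:infty}. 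Because $a \in \partial\omega$ for some $\omega \in \Omega$ and $\{\sT^k a\}_{k\ge 1}$ is infinite, the orbit of $a$ meets the discontinuity set only finitely often, so for all large $k$ the point $\sT^k a$ lies in the interior of a single partition element; this means $\sT$ is locally a diffeomorphism near each $\sT^k a$, and pulling back by $\cL_\varphi$ picks up exactly the factor $\varphi(\sT^k a)$ from the appropriate inverse branch while respecting the one-sided limit. Thus $\ell_a^{\pm} \circ \cL_\varphi^n$ should equal $\bigl(\prod_{k=1}^n \varphi(\sT^k a)\bigr) \cdot \ell_{a_n}^{\pm}$ (up to an orientation bookkeeping for the $\pm$ and a finite transient), where $a_n$ is a preimage point; iterating along the backward orbit of $a$ one builds functionals whose growth rate under $\cL_\varphi^*$ is governed by $\Lambda^{\inf}(\sT,\varphi,a)$.

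The first step is therefore to make precise the ``transfer operator acts by multiplication on one-sided germs at orbit points of $a$'' computation: for $h \in \cC^\infty(\cI)$ and $n$ large enough that $\sT^k a$ avoids the discontinuity set for $k = 1, \dots, n$, identify the relevant inverse branch of $\sT^n$ through which the one-sided neighbourhood of $a$ is the image, and verify $\lim_{x\to a^{\pm}}(\cL_\varphi^n h)(x) = \bigl(\prod_{k=1}^n \varphi(\sT^k a)\bigr)\, h(b^{\mp \text{ or }\pm})$ for the corresponding backward-orbit point $b$ of $a$ under that branch, with the sign determined by the local monotonicity of the branch. The second step is to extract a genuine spectral consequence: suppose for contradiction that the essential spectral radius $\rho$ of $\cL_\varphi$ on $\fB$ is strictly less than $\Lambda^{\inf}(\sT,\varphi,a)$. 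Pick $\rho < r < \Lambda^{\inf}(\sT,\varphi,a)$. Outside $\cball{r}$, the spectrum of $\cL_\varphi$ consists of finitely many eigenvalues of finite multiplicity, so $\fB = V \oplus W$ with $V$ finite-dimensional $\cL_\varphi$-invariant, $W$ $\cL_\varphi$-invariant, and the spectral radius of $\left.\cL_\varphi\right|_W$ at most $r$; correspondingly the dual splits and any functional in $(\fB)^*$ decomposes, with the $W^*$-component having $\cL_\varphi^*$-orbit growing no faster than $r^n$ (up to subexponential factors). Applying this to $\ell := \ell_a^{\pm}$ (restricted appropriately and extended by Hahn--Banach to all of $\fB$, noting the bound is only needed on the invariant subspace $\bigcup_n \cL_\varphi^n(\cC^\infty(\cI))$), and testing against a fixed $h \in \cC^\infty(\cI)$ with $h \equiv 1$, gives on one hand $\abs{(\cL_\varphi^{*n}\ell)(h)} = \bigl|\prod_{k=1}^n \varphi(\sT^k a)\bigr|$, and on the other hand a bound of the form $C_\varepsilon (r+\varepsilon)^n$ plus a term from the finite-dimensional part $V$.

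The finite-dimensional part $V$ is the delicate point and the main obstacle: a priori the functional $\ell$ could be supported on $V$, in which case $(\cL_\varphi^{*n}\ell)(h)$ grows like $(\text{largest eigenvalue on } V)^n$, which is fine for our inequality only if that eigenvalue is itself $\ge \Lambda^{\inf}(\sT,\varphi,a)$ — but then it lies outside $\cball{r}$ and is not essential spectrum, so it does not contradict anything. The resolution is that we must produce not one but a whole family of functionals $\ell$ whose $\cL_\varphi^*$-images are, in a suitable sense, linearly independent or at least not eventually trapped in a fixed finite-dimensional space: varying $n$ and using the backward orbit points $b = b_n$, which are all distinct (again because the forward orbit of $a$ is infinite, so the backward orbit along the chosen branches does not close up), the germ functionals $\ell_{b_n}^{\pm}$ are linearly independent, and one argues that infinitely many of them must have nonzero $W^*$-component; choosing such an $n$ and running the above estimate yields $\bigl|\prod_{k=1}^n \varphi(\sT^k a)\bigr|^{1/n} \le r + o(1)$ along a subsequence, contradicting $r < \Lambda^{\inf}(\sT,\varphi,a) = \liminf_n \bigl|\prod_{k=1}^n\varphi(\sT^k a)\bigr|^{1/n}$. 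Making the ``infinitely many have nonzero $W^*$-component'' step rigorous — essentially a pigeonhole/dimension-counting argument on the finite-dimensional $V$ — together with the careful sign and transient bookkeeping in the germ computation, is where the real work lies; the functional-analytic skeleton is otherwise standard.
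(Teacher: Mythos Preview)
Your central computation is wrong: the one-sided evaluation functional does \emph{not} transform multiplicatively under \(\cL_\varphi^*\). Writing \(\cL_\varphi h(y)=\sum_{\omega} (\varphi\cdot h)\circ(\sT|_\omega)^{-1}(y)\,\characteristic{\sT\omega}(y)\), the value \((\cL_\varphi h)(a^{\pm})\) picks up contributions from \emph{all} inverse branches whose image covers \(a\), not just one, so there is no single backward-orbit point \(b\) and no clean product. Testing against \(h\equiv 1\) makes this transparent: \((\cL_\varphi^n\mathbf{1})(a^{\pm})=\sum_{x:\sT^n x=a}\varphi_n(x)\), which is neither a single term nor the forward-orbit product \(\prod_{k=1}^n\varphi(\sT^k a)\) that defines \(\Lambda^{\inf}\). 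The paper repairs this by using the \emph{jump} functional \(J(h,a_k)=h(a_k^+)-h(a_k^-)\) evaluated at the \emph{forward} orbit points \(a_k=\sT^k a\): for \(g\) whose discontinuities lie only in the orbit set \(\Delta\), every inverse branch through a point \emph{not} in \(\Delta\) contributes continuously and hence cancels in the jump, leaving the single relation \(J(\cL_\varphi g,a_k)=\gamma_{k-1}\varphi(a_{k-1})J(g,a_{k-1})\) (their Lemma~3.4). This is the mechanism that converts \(\cL_\varphi^*\) into a weighted shift on the sequence of jumps, and it is essential.

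Even with that correction, your second step is shaky. The shift on jumps is one-sided, so there is a boundary term at the first index; the paper absorbs it by subtracting an explicit rank-one operator \(\cK\) and exhibiting, for every \(\abs{\lambda}<\Lambda^{\inf}\), a genuine eigenfunctional \(\ell_\lambda=\sum_{k\ge k_0}\lambda^k\alpha_k\,J(\cdot,a_k)\) of \((\cL_\varphi-\cK)^*\). That produces a full disc in the spectrum directly, and stability of essential spectrum under compact perturbation plus a restriction-to-subspace lemma finishes. Your route via a spectral decomposition \(\fB=V\oplus W\) and a pigeonhole argument on the \(W^*\)-components of infinitely many germ functionals is not obviously salvageable: after Hahn--Banach extension the action of \(\cL_\varphi^*\) on the extended functionals need not respect the shift structure, and the linear-independence claim for the \(\ell_{b_n}^{\pm}\) as elements of \(\fB^*\) (not merely as functionals on \(\fA_0\)) is unproven.
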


\noindent
The proof of this result\footnote{We prove the slightly stronger statement \(\FullEssSpec{\cL_{\varphi}} \supseteq \cball{\Lambda^{\inf}}\) (see Section~\ref{sec:ess-spec}).} is the content of Section~\ref{sec:lower} and takes advantage of some details related to the essential spectrum which are described in Section~\ref{sec:ess-spec}.

In particular, this theorem means that, when a system fails to be Markov, even increasing the regularity of observables by choosing suitable Banach spaces, it is not possible to obtain a precision better that \(\Lambda^{\inf}(T,\varphi)\) in a Ruelle resonance description of correlations.

A particular case of the theorem is when \(\varphi\) is a constant as happens for the transfer operator associated to the measure of maximal entropy.
In this case Theorem~\ref{thm:lower} gives a lower estimate for the essential spectral radius which is equal to the known~\cite{BK90} upper estimate for the essential spectral radius when using \(\BV\) (equal to the constant value of \(\varphi\)).
In other words, when studying the transfer operator associated to the measure of maximal entropy, if there is a non-trivial discontinuity, then it is impossible to get a smaller essential spectral radius than the one obtained by using \(\BV\).

The \(\beta\)-transformations defined as \(x \mapsto \beta x \mod 1\) for \(\beta \in (1,2]\) (see, e.g., \cite{CT12,Parry60}) fit the setting of this present study.
In this case, also for the transfer operator associated to the SRB measure, the weight \(\varphi\) is constant because the derivative is constant. 
Again we see that, for all \(\beta\) such that the transformation fails to be Markov, the essential spectral radius can't be smaller than \(\frac{1}{\beta}\) (the essential spectral radius in \(\BV\)).

In the situation where one studies the transfer operator acting on a Banach space which is large, e.g., \(L^{p}\) with \(1 \le p < \infty\), then it is possible to construct approximate eigenfunctions which are piecewise constant and hence show that the spectrum of the transfer operator is the entire disc of radius equal to the spectral radius \footnote{Indeed, for all \(\epsilon > 0\) and \(n \in \mathbb{N}\) there exists a set \(A\) such that the sets \(\{T^{-k}A\}_{k=0}^{n-1}\) are pairwise disjoint and their measure add up to \(1-\epsilon\) (this can be done using a version of the Rokhlin Lemma for non-invertible transformations). Then, using this one can define a function  which satisfies pointwise \(\cL g(x) = \lambda g(x)\) on \(\{T^{-k}A\}_{k=0}^{n-2}\) and has unit \(L^{p}\) norm, providing approximate eigenvalues.}.
Similar results hold when the space is large, even if not as large as \(L^{2}\) (see \cite[Remark 1.5]{Baladi00} where a disc of eigenvalues is shown to exist in the case of Hölder spaces).

In the case of \(\cC^{\infty}\) expanding circle transformations, with the weight for the transfer operator again given by \(\varphi = 1/\abs{\sT^{\prime}}\), Collet \& Isola~\cite{CI91} studied the essential spectrum of the transfer operator acting on the space of \(\cC^r\) observables.
They showed that the essential spectrum is a full disc which gets arbitrarily small as \(r\) increases.\footnote{If \(m\) denotes the invariant measure, the logarithm of the essential spectral radius on \( \cC^{r}\) is equal to \(\lim_{n\to\infty}\frac{1}{n}\log \int_{0}^{1} \abs{{(\sT^n)}^{\prime}}^{-r} dm\).}
(See~\cite{GL03} for the higher dimensional expanding case studied on \(\cC^{r+\alpha}\) spaces.)
Similarly, for \(\cC^{\infty}\) hyperbolic systems, the essential spectrum can be made as small as desired for a specific choice of the Banach space~\cite{GL08}.
In the case when the piecewise monotone interval transformation is Markov, it is possible to make the essential spectrum as small as desired (see e.g., \cite{BKL22}).
For this reason, the requirement in Theorem~\ref{thm:lower} that the transformation has a non-trivial discontinuity, is essential.
However, particularly as the set of transformations with non-trivial discontinuity is dense amongst the set of all piecewise monotone transformations,\footnote{If the transformation doesn't have a non-trivial discontinuity then the orbit of the discontinuity is pre-periodic. Iteratively making smaller and smaller perturbations guarantees that the orbit isn't pre-periodic and hence the transformation fails to be Markov.}
the present work fills in much of the gap that had remained on this topic.

Piecewise monotone interval transformations, the setting of this current work, have routinely been studied using \(\BV\)~\cite{LY73,Rychlik83,Keller84,Keller89,BK90} and generalized bounded variation~\cite{Keller85,Butterley13,Butterley14}.
Keller~\cite[p.184]{Keller84} demonstrated that previous estimates for the essential spectral radius of transfer operators acting on \(\BV\) are optimal, i.e., the essential spectral radius is equal to \(\lim_{n}\norm{ \smash{ \prod_{k=0}^{n-1} \varphi\circ \sT^k} }_{L^\infty}^{1/n}\) in the case when \(\varphi = 1/\abs{\sT^{\prime}}\).
The argument uses a sequence of indicator functions supported on smaller and smaller intervals in order to produce the lower bound on the essential spectral radius.
Theorem~\ref{thm:lower} shows that we can't have a smaller essential spectrum than we would obtain using \(\BV\) if the transformation has a non-trivial discontinuity and \(\norm{\smash{ \prod_{k=0}^{n-1} \varphi\circ \sT^k}}_{L^\infty}^{1/n}\) is comparable to \(\Lambda^{\inf}_{\sT,\varphi}\).
The first quantity is never smaller than the second quantity but this raises the question:
Is there a transformation where the first is strictly greater?
Specializing to the case where \(\varphi = \frac{1}{\abs{\sT'}}\), we answer this question as follows.

\begin{theorem}
    \label{thm:example}
    For any \(c \in (0,1)\), there exists a piecewise monotone interval transformation \(\sT: [0,1] \to [0,1]\) with a non-trivial discontinuity such that
    \[
        \lim_{n\to \infty}\norm{ \smash{\tfrac{1}{(\sT^n)'}} }_{L^\infty}^{1/n}
        - \Lambda({\sT,\tfrac{1}{\abs{\sT'}}})
        > c.
    \]
\end{theorem}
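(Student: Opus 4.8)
\emph{Proof idea.}
Fix \(c\in(0,1)\). The plan is to exhibit an explicit piecewise linear expanding map in which the worst-case (\(L^{\infty}\)) expansion rate and the expansion rate along the forward orbit of the discontinuity are decoupled: the map will have one ``slow'' linear branch of slope \(s\) barely above \(1\) together with a ``fast'' part made of \(\beta\) full linear branches of large integer slope \(\beta\), arranged so that the fast part is forward invariant and the image of the unique non-trivial discontinuity lands immediately inside it. Then \(\lim_{n}\norm{1/(\sT^{n})'}_{L^{\infty}}^{1/n}=1/s\) (the worst-case expansion is governed by the slow branch) while \(\Lambda(\sT,1/\abs{\sT'})=1/\beta\) (governed by the fast orbit of the discontinuity), and these two numbers are forced apart by taking \(s\) close to \(1\) and \(\beta\) large.

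Concretely, fix an integer \(\beta>1/(1-c)\), so that \(\beta\ge2\) and \(c+1/\beta<1\), and then fix \(s\) in the nonempty open interval \(\bigl(1,\min\{2,(c+1/\beta)^{-1}\}\bigr)\subset(1,2)\), subject to a genericity condition to be specified; note \(1/s>c+1/\beta\). Define \(\sT:[0,1]\to[0,1]\) by \(\sT(x)=sx\) for \(x\in(0,\tfrac12)\), and on each of the \(\beta\) intervals \(\bigl(\tfrac12+\tfrac{j-1}{2\beta},\,\tfrac12+\tfrac{j}{2\beta}\bigr)\), \(1\le j\le\beta\), let \(\sT\) be the increasing affine bijection onto \((\tfrac12,1)\), which has slope \(\beta\). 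Then \(\sT\) is a \(\cC^{\infty}\) piecewise monotone transformation, it is expanding (all slopes lie in \(\{s,\beta\}\subset(1,\infty)\)), and \(\varphi:=1/\abs{\sT'}\) equals \(1/s\) on the slow branch and \(1/\beta\) on every fast branch, so it is bounded and \(\cC^{0}\) on each element of \(\Omega\). Moreover \(\sT([0,\tfrac12])=[0,s/2]\) with \(s/2\in(\tfrac12,1)\), and \(\sT\) maps \([\tfrac12,1]\) onto itself, so the fast region is forward invariant.

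Next I would identify the discontinuities. The cut points of \(\sT\) are \(0\), \(1\), the points \(\tfrac12+\tfrac{j}{2\beta}\) for \(1\le j\le\beta-1\), and the two copies of \(\tfrac12\) in \(\cI\). Each of \(0\), \(1\), and the \(\tfrac12+\tfrac{j}{2\beta}\) is mapped, on either side, into the finite set \(\{0,\tfrac12,1\}\), every point of which is fixed by \(\sT\), so these are trivial discontinuities; likewise the fast copy of \(\tfrac12\) is a fixed point. The slow copy of \(\tfrac12\), call it \(a\), satisfies \(\sT a=s/2\in(\tfrac12,1)\). Now \(\sT|_{[\tfrac12,1]}\) is a full-branch expanding map with \(\beta\) affine branches (conjugate to \(z\mapsto\beta z\bmod 1\)), and such a map has only countably many points whose forward orbit is finite or meets the finite set \(\{\tfrac12,1\}\cup\{\tfrac12+\tfrac{j}{2\beta}:1\le j\le\beta-1\}\); the genericity condition I impose on \(s\) is that \(s/2\) avoid this countable exceptional set. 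For such \(s\) the orbit \(\{\sT^{k}a\}_{k\ge1}\) is an infinite subset of \((\tfrac12,1)\) meeting no cut point, so \(a\) is a non-trivial discontinuity; as it is the only one and \(\varphi(\sT^{k}a)=1/\beta\) for every \(k\ge1\), we get \(\Lambda^{\inf}(\sT,\varphi,a)=\Lambda^{\sup}(\sT,\varphi,a)=1/\beta\) and hence \(\Lambda(\sT,1/\abs{\sT'})=1/\beta\).

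Finally one evaluates the \(L^{\infty}\) quantity and concludes. For every \(x\) and \(n\), the factors of \((\sT^{n})'(x)=\prod_{k=0}^{n-1}\abs{\sT'(\sT^{k}x)}\) each equal \(s\) or \(\beta\), both at least \(s\), so \(1/(\sT^{n})'(x)\le s^{-n}\); and on the positive-measure set \(\{x:s^{n-1}x<\tfrac12\}\) the first \(n\) iterates all lie in the slow branch, so \(1/(\sT^{n})'(x)=s^{-n}\) there. Thus \(\norm{1/(\sT^{n})'}_{L^{\infty}}=s^{-n}\) for all \(n\) and \(\lim_{n}\norm{1/(\sT^{n})'}_{L^{\infty}}^{1/n}=1/s\). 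Combining,
\[
    \lim_{n\to\infty}\norm{\smash{\tfrac{1}{(\sT^{n})'}}}_{L^{\infty}}^{1/n}-\Lambda\bigl(\sT,\tfrac{1}{\abs{\sT'}}\bigr)=\frac{1}{s}-\frac{1}{\beta}>c
\]
by the choices of \(\beta\) and \(s\). The step most in need of care is the genericity argument of the previous paragraph — guaranteeing that the discontinuity's orbit is infinite, stays in the fast region, and never meets a cut point where \(\sT\) would be ambiguous (as a map on \(\cI\)) — together with the bookkeeping that no other cut point is a non-trivial discontinuity; everything else is elementary.
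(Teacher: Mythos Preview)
Your proof is correct and follows essentially the same strategy as the paper: a piecewise affine map with a slow branch (governing the \(L^\infty\) quantity via a fixed point near which orbits linger) and a fast forward-invariant region into which the unique non-trivial discontinuity is launched (governing \(\Lambda\)). The paper's explicit construction differs only in detail --- it uses four branches, with a separate ``launcher'' branch of adjustable slope \(\rho\) chosen to hit a specific non-periodic point in a two-branch fast subsystem, whereas you vary the slow slope \(s\) itself and invoke genericity --- but the mechanism is the same.
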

\noindent
Section~\ref{sec:example} is devoted to the explicit construction of these examples and hence the proof of above result.

This theorem tells us that there are transformations where the theoretical best case of Theorem~\ref{thm:lower} is substantially different from the essential spectral radius obtained using \(\BV\).
Hence it could be possible to make the essential spectral radius smaller by choosing a ``better'' Banach space.
By constructing specialized Banach spaces we are able to achieve exactly this as per the following result.

\begin{theorem}
    \label{thm:custom}
    Suppose that \(\sT:[0,1] \to [0,1]\) is a \(\cC^\infty\) piecewise monotone transformation (with associated partition \(\Omega\)) and that \(\varphi:[0,1]\to \bC\) admits a \(\cC^\infty\) extension to \(\overline{\omega}\) for each \(\omega \in \Omega\) and is uniformly bounded away from zero.
    Further suppose that \(\sT\) is uniformly expanding\footnote{In the sense that there exists \(C>0\), \(0 < \lambda < 1\) such that \(\abs{(\sT^{n})^{\prime}}\geq C\lambda^{-n}\) and all \(n\).}
    and \({\|\varphi \cdot T^{\prime}\|}_{L^{\infty}} < \infty\).

    Then, for all \(\tilde{\Lambda} > \Lambda^{\sup}(\sT,\varphi)\), there exists a Banach space \(\fB \subset L^{\infty}([0,1])\) such that,
    \begin{enumerate}
        \item \(\cC^\infty(\cI) \subseteq \fB\);
        \item \(\cL_{\varphi}\) extends to a continuous operator on \(\fB\);
        \item The essential spectral radius of \(\cL_{\varphi}: \fB \to \fB\) is at most \(\tilde{\Lambda}\).
    \end{enumerate}
\end{theorem}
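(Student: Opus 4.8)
To prove Theorem~\ref{thm:custom} the plan is to construct \(\fB\) as the sum of a space of piecewise \(\cC^{r}\) functions — which takes care of the genuinely smooth part of the dynamics exactly as in Collet--Isola~\cite{CI91} — and a weighted \(\ell^{1}\) space of elementary jump functions \(\characteristic{[b,1]}\) supported on the forward orbit \(\mathcal D=\bigcup_{a}\{\sT^{k}a:k\ge1\}\) of the (finitely many) non-trivial discontinuities \(a\) of \(\sT\), with weights tuned so that \(\cL_{\varphi}\) acts on the jump part essentially as the weighted shift \(\characteristic{[b,1]}\mapsto\varphi(b)\,\characteristic{[\sT b,1]}\) and the spectral radius of that shift is forced down to just above \(\Lambda^{\sup}(\sT,\varphi)\).

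Concretely: fix \(\tilde\Lambda_{1}\in(\Lambda^{\sup}(\sT,\varphi),\tilde\Lambda)\); fix a finite partition \(\cI'\) refining \(\Omega\) and containing (as endpoints) the—finite—forward orbits of all trivial discontinuities, so that \(\sT(\partial\cI')\subseteq\partial\cI'\cup\mathcal D\); and, for \(b=\sT^{k}a\in\mathcal D\), set \(w_{b}=\tilde\Lambda_{1}^{\,k-1}\big/\abs{\prod_{i=1}^{k-1}\varphi(\sT^{i}a)}\) (the minimum of these quantities if \(b\) lies on several such orbits, empty product \(=1\)). A short computation gives \(\abs{\varphi(b)}\,w_{\sT b}/w_{b}=\tilde\Lambda_{1}\) for every \(b\); and since \(\tilde\Lambda_{1}>\Lambda^{\sup}(\sT,\varphi)\ge\Lambda^{\sup}(\sT,\varphi,a)\) one has \(\abs{\prod_{i=1}^{k}\varphi(\sT^{i}a)}^{1/k}<\tilde\Lambda_{1}\) for all large \(k\), whence \(w_{b}\to\infty\) as \(b\) exhausts \(\mathcal D\) while still \(\inf_{b}w_{b}>0\). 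For \(r\ge1\) to be chosen below, let \(\fB\) consist of all \(h=g+\sum_{b\in\mathcal D}c_{b}\,\characteristic{[b,1]}\) with \(g\in\cC^{r}(\cI')\) and \(\sum_{b}w_{b}\abs{c_{b}}<\infty\), with \(\snorm{h}\) the infimum of \(\norm{g}_{\cC^{r}}+\sum_{b}w_{b}\abs{c_{b}}\) over all such decompositions (an infimum only because \(\characteristic{[b,1]}\in\cC^{r}(\cI')\) for the finitely many \(b\in\partial\cI'\)). Then \(\fB\) is a Banach space (a quotient of \(\cC^{r}(\cI')\oplus\ell^{1}(\mathcal D,w)\) by a closed subspace), \(\fB\subseteq L^{\infty}([0,1])\) because \(\inf_{b}w_{b}>0\) forces \(\sum_{b}\abs{c_{b}}<\infty\), and \(\cC^{\infty}(\cI)\subseteq\cC^{r}(\cI')\subseteq\fB\); this is (1).

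Next I would read off from \eqref{eq:L-defin} that \(\cL_{\varphi}\) maps \(\cC^{r}(\cI')\) into \(\cC^{r}(\cI')\) plus the span of finitely many \(\characteristic{[b,1]}\), \(b\in\mathcal D\cap\sT(\partial\cI')\), with norms \(\lesssim\norm{g}_{\cC^{r}}\) (using that \(\varphi\) is \(\cC^{r}\) on each \(\overline{\omega}\) and \(\norm{\varphi\cdot\sT'}_{L^{\infty}}<\infty\)), and that \(\cL_{\varphi}\characteristic{[b,1]}=\varphi(b)\characteristic{[\sT b,1]}+e_{b}\) with \(\norm{e_{b}}_{\cC^{r}}\) bounded uniformly in \(b\) (for all but finitely many \(b\), \(\varphi\) is continuous at \(b\); the exceptional terms are absorbed into the \(\cC^{r}(\cI')\) summand). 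Combined with \(\abs{\varphi(b)}w_{\sT b}/w_{b}=\tilde\Lambda_{1}\), this shows \(\cL_{\varphi}\) is bounded on \(\fB\) — giving (2) — with jump-to-jump part of norm \(\tilde\Lambda_{1}\). For (3) I would iterate these identities to prove a Lasota--Yorke inequality \(\snorm{\cL_{\varphi}^{n}h}\le C\Theta_{*}^{n}\snorm{h}+C_{n}\norm{h}_{\mathrm w}\), where \(\norm{h}_{\mathrm w}=\inf\{\norm{g}_{\cC^{r-1}}+\sum_{b}\abs{c_{b}}\}\) — for which the inclusion \(\fB\hookrightarrow(\fB,\norm{\cdot}_{\mathrm w})\) is compact by Arzelà--Ascoli and \(w_{b}\to\infty\) — and \(\Theta_{*}=\max(\Theta(r),\tilde\Lambda_{1})\), with \(\Theta(r)\) the Collet--Isola contraction rate of the piecewise-\(\cC^{r}\) part, which \(\to0\) as \(r\to\infty\) by uniform expansion and \(\norm{\varphi\cdot\sT'}_{L^{\infty}}<\infty\); the unbounded sequence \(C_{n}\) (carrying the true spectral radius of \(\cL_{\varphi}\)) lands entirely on the compact term. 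Picking \(r\) with \(\Theta(r)<\tilde\Lambda\) makes \(\Theta_{*}<\tilde\Lambda\), and Hennion's theorem then gives essential spectral radius \(\le\Theta_{*}<\tilde\Lambda\).

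The step I expect to be the real obstacle is this Lasota--Yorke estimate: one must organise the iteration so that the multiplicative growth \(\abs{\prod\varphi}\) picked up by jumps as they are dragged along the (typically dense) set \(\mathcal D\) is \emph{exactly} compensated by the weights, leaving only \(\Theta_{*}\) in the strong-norm term and relegating all spectral-radius-sized growth to the compact remainder, and simultaneously that the weights admit the two competing properties \(\inf_{b}w_{b}>0\) (needed for \(\fB\subset L^{\infty}\)) and \(w_{b}\to\infty\) (needed for compactness). Both hold precisely because \(\tilde\Lambda>\Lambda^{\sup}(\sT,\varphi)\); this is the only place where the strict inequality — and the appearance of \(\Lambda^{\sup}\) rather than \(\Lambda^{\inf}\) — is used. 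A minor nuisance is the possibility that some \(\sT^{k}a\) is itself a partition point of \(\sT\), which I would handle through the infimum (quotient) norm together with a one-off finite enlargement of \(\cI'\).
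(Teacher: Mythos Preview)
Your overall strategy coincides with the paper's: split the norm into a ``smooth'' piece controlled by \(\cC^{r}\)-type estimates (giving contraction \(\eta\lambda^{r-1}\), made small by choosing \(r\) large) and a ``jump'' piece supported on the forward orbit \(\mathcal D\) of the discontinuities, weighted so that \(\cL_{\varphi}\) contracts it at rate \(\tilde\Lambda\); then prove a Lasota--Yorke inequality and conclude via Hennion/Nussbaum. Your weights \(w_{b}\) are, up to indexing, exactly the paper's \(\zeta(j,k)=\tilde\Lambda^{k}/|\varphi_{k}(a_{j,0})|\), and the compactness mechanism (Arzel\`a--Ascoli for the smooth part, \(w_{b}\to\infty\) for the jump part) is equivalent to the paper's \(\BV\hookrightarrow L^{1}\) argument.

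There is, however, a genuine gap. For \(r\ge 2\) your space \(\fB=\cC^{r}(\cI')+\ell^{1}(\mathcal D,w)\cdot\{\characteristic{[b,1]}\}\) is \emph{not} \(\cL_{\varphi}\)-invariant, so neither of the two displayed identities you rely on holds. Concretely: take \(b\in\mathcal D\) with \(Tb\notin\partial\cI'\) and compute \(e_{b}=\cL_{\varphi}\characteristic{[b,1]}-\varphi(b)\characteristic{[Tb,1]}\) near \(y=Tb\). On the branch \(\omega\ni b\) the relevant summand of \(\cL_{\varphi}\characteristic{[b,1]}\) is \(\varphi\!\circ\!({\left.T\right|_{\omega}})^{-1}(y)\,\characteristic{[Tb,\,\sup T\omega)}(y)\); subtracting \(\varphi(b)\characteristic{[Tb,1]}\) makes \(e_{b}\) continuous at \(Tb\), but the one-sided first derivatives there are \(0\) and \(\varphi'(b)/T'(b)\), so \(e_{b}\notin\cC^{1}\) at \(Tb\) unless \(\varphi'\equiv 0\). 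The same happens for \(\cL_{\varphi}g\) with \(g\in\cC^{r}(\cI')\): at each new point \(c=Tp\in\mathcal D\setminus\partial\cI'\) one picks up jumps in \emph{all} derivatives of \(\cL_{\varphi}g\) up to order \(r\) (they involve \(g^{(j)}(p^{\pm})\) and \(\varphi^{(j)}(p^{\pm})\)), and a single step function \(\characteristic{[c,1]}\) cancels only the zeroth-order one. Since you need \(r\ge 2\) to get \(\Theta(r)<\tilde\Lambda\), this is fatal to the construction as written.

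The paper's remedy is precisely to track the higher-order jumps: instead of a decomposition into smooth plus step functions, it works with piecewise \(\cC^{\infty}\) functions having discontinuities only in \(\Delta\) and takes the norm
\[
\|h\|_{\fB_{\zeta,r}}=\sum_{t=0}^{r}\|\apart^{t}h\|_{L^{1}}+\sum_{t=0}^{r-1}\sum_{a\in\Delta}\zeta_{a}\,\bigl|J(\apart^{t}h,a)\bigr|,
\]
i.e.\ it weights the jumps of \emph{each} derivative \(\apart^{t}h\), \(0\le t\le r-1\). Equivalently, in your language, you would need to replace the single family \(\characteristic{[b,1]}\) by the \(r\) families \((x-b)^{j}\characteristic{[b,1]}/j!\), \(0\le j\le r-1\), and track how \(\cL_{\varphi}\) mixes them. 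Once this is done the rest of your outline (the weight computation, the Lasota--Yorke split, and the compactness argument) goes through essentially as you describe.
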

\noindent
The proof of the above result is the content of Section~\ref{sec:custom} where we explicitly construct the family of Banach spaces.
The required distortion result is the content of Section~\ref{sec:distort}.

This theorem tells that, when \(\Lambda^{\sup}(\sT,\varphi) = \Lambda^{\inf}(\sT,\varphi)\), there exists a particular choice of Banach space such that the essential spectral radius is as close to the theoretical minimum as desired.
However we have to pay a price for this and the major downside is that the Banach space is very closely tied to the given system, even similar transformations couldn't be studied on the same Banach space which unfortunately would make delicate study of perturbations impossible (see, e.g., \cite{KL99}).

The present results don't say anything about the point spectrum and a high degree of smoothness of the system doesn't prevent the existence of plenty of it~\cite{KR04}.
Sometimes it is possible to use other arguments in order to identify the point spectrum~\cite{FGL19,BKL22} but control on the essential spectrum, as obtained here, is the key prerequisite to further investigation.

\begin{remark}[Fractional Sobolev spaces]
    An important class of Banach spaces which have been useful for expanding transformations with discontinuities are the ``fractional'' Sobolev spaces (see, e.g., \cite{Thomine11, AS20, Smania21}).
    They have the advantage that they allow the study of unbounded observables.
    On the other hand, assumption (\ref{itm:infty}) of Theorem~\ref{thm:lower} is not satisfied.
    Let \(p \in \mathbb{N}\), \(t\in (0,\frac{1}{p})\) and consider the factional Sobolev space \(H_p^t\) \cite{Thomine11}. 
    The norm of the characteristic function of an interval is bounded by a constant independent of the interval \cite[Corollary 3.7]{Strichartz1967}.
    For simplicity consider the case of  \(\sT: x \mapsto k x \mod 1\), \(k\in \{2,3,\ldots\}\) and the transfer operator associated to the SRB measure acting on \(H_p^t\).
    One may search for eigenfunctions of the form \({h_z = \sum_{\ell=0}^{\infty} z^\ell h\circ \sT^\ell}\) where \(h\) is in the kernel of the transfer operator (cf.\,\cite[Lemma 5]{CI91}, general transformations and transfer operators can be treated by modifying this same idea).
    If the sum converges then \(h_z\) is an eigenfunction associated to eigenvalue \(z\).
    We choose \(h\) to be the difference of the characteristic functions of two intervals and then  convergence follows from the bound on the \(H_p^t\) norm of characteristic functions. 
    Consequently the essential spectral radius is at least \(k^{-1}\) in this case. (If \(A\) is an interval, then \(\characteristic{A} \circ T^{\ell} = \characteristic{T^{-\ell}A}\) and so \(T^{-\ell}A\) is composed of at most \(k^\ell\) intervals.)
    In particular, the essential spectrum on \(H_p^t\) is not smaller than if we were to use \(\BV\).
\end{remark}

\begin{remark}[\(L^\infty\) bound]
    Assumption~(\ref{itm:infty}) of Theorem~\ref{thm:lower} is unfortunate and we suspect that it could be removed since, for all choices of Banach space which come to mind, the conclusion of the theorem holds.
    When the Banach space is sufficiently large the argument is to construct eigenfunctions or approximate eigenfunctions.
    When the Banach space is smaller, as is the case we work with, the dual space is bigger and the present work takes advantage of this to construct the eigenvectors of the dual.
\end{remark}

\begin{remark}[Zeta functions]
    The study of resonances is intimately connected to zeta functions. 
    Milnor \& Thurston \cite[p.558]{MT88} gave an example of an interval transformation for which they showed that the zeta function cannot be extended meromorphically outside a specified disc. 
    The same is known for \(\beta\)-transformations when they aren't Markov (stated in \cite[Theorem 2.4]{FLP94}, using essentially \cite{Takahashi73}).
    In general, if the transfer operator (acting on a relevant Banach space) has essential spectral radius contained in a disc of radius \(r\), then the zeta function admits a meromorphic extension to a disc of radius \(r^{-1}\) (see, e.g., \cite{HK84,BK90} and \cite[Part I]{Baladi18}). 
    If this connection between spectrum and zeta function were true for general Banach spaces (as suggested by a weight of evidence but currently an open problem as far as the authors are aware) then the two results cited above would imply, in those settings, lower bounds on the essential spectral radius for the transfer operator acting on any relevant Banach space.
\end{remark}

We would hope that results, similar to this work, can be obtained, both in the case of piecewise \(\cC^{\infty}\) expanding transformations in higher dimension and billiard maps (2D, hyperbolic with discontinuities).
This seems probable although in higher dimension there are additional complications related to the geometry and propagation of discontinuities.

\section{Lower bound}%
\label{sec:lower}

Let \(T:[0,1]\to[0,1]\), \(\varphi:[0,1]\to\bC\), \(\fB\supseteq \cC^{\infty}(\cI)\) and  \(\left\{a_k\right\}_{k=1}^{\infty}\) (orbit of the non-trivial discontinuity, i.e., \(a_{k} = T^{k}a\), where \(a \in \partial \omega\),  \(\omega \in \Omega\) and \({\{\sT^k a\}}_{k=1}^{\infty}\) is an infinite set) be fixed according to the assumptions of Theorem~\ref{thm:lower} for the rest of this section.
Additionally, for notational simplicity, we drop the subscript and write \(\cL\) for \(\cL_{\varphi}\).

Since the discontinuities of \(\sT\) are key it is convenient to introduce some additional notation for this.
We may assume that \(\Omega\) is the maximal partition on which \(\sT\) is continuous.
Let \(N=\#\Omega\) and let \(0=c_{0} < c_{1} < \cdots < c_{N} = 1\) be the points of discontinuity in the sense that \(\Omega = \left\{ (c_{\ell-1}, c_{\ell}) \right\}_{\ell=1}^{N}\).
Let \(\Gamma = \{c_0,\ldots,c_N\}\).
There are \(2N\) sequences of points of the form\footnote{Here and subsequently, we use the superscript \(+\)/\(-\) for the left and right limits in the sense that $g(a_{k}^{+})=\lim_{\epsilon \rightarrow 0}g(a_{k} + \epsilon)$ and $g(a_{k}^{-})=\lim_{\epsilon \rightarrow 0}g(a_{k} - \epsilon)$.} \(\left\{\sT^{k}(c_{\ell}^{-})\right\}_{k=0}^{\infty}\) or \(\left\{\sT^{k}(c_{\ell}^{+})\right\}_{k=0}^{\infty}\).
We say that these are the discontinuity orbits and let \(\Delta\) denote the set of all points in all these orbits.
Each one of these discontinuity orbits may be a finite or infinite set of points.
In the case that the transformation is Markov the set would be finite.
For convenience we use the convention that \(\sT^{-1}y = \{x: y=\sT(x^-)\} \cup \{x: y=\sT(x^+)\}\).

\begin{lemma}
    \label{lem:ak-start}
    Without loss of generality we may suppose that \({\{a_k\}}_{k=1}^{\infty}\) is such that \(\sT^{-n}a_k \cap \Gamma = \emptyset\) whenever \(0 \le n<k\) and that \(\sT^{-1}a_1 \cap \Gamma \neq \emptyset\).
\end{lemma}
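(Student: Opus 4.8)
The plan is to normalise the orbit, possibly replacing $a$ by a different non-trivial discontinuity, so that the distance to the discontinuity set grows by exactly one at each step. For a point $q$ lying on some discontinuity orbit --- i.e.\ in the set $\Delta$ --- let $d(q)$ denote the least $n\geq 0$ for which $q=\sT^{n}(c)$ for some $c\in\Gamma$ (following one-sided branches at discontinuities, consistently with the convention for $\sT^{-1}$). Since $a_{0}=a\in\Gamma$ one has $d(a_{k})\leq k$, and as $a_{k}$ is reachable from $\Gamma$ in $k$ steps, the relation $d(a_{k})=k$ holds precisely when $\sT^{-n}a_{k}\cap\Gamma=\emptyset$ for all $0\leq n<k$. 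Hence the first assertion of the lemma, for a given discontinuity orbit, is equivalent to ``$d(a_{k})=k$ for every $k\geq 1$''; and once this holds the second assertion is automatic (as $d(a_{1})=1$ exhibits a point of $\Gamma$ in $\sT^{-1}a_{1}$) and so is the requirement that $a$ be non-trivial (the $a_{k}$ are then pairwise distinct, hence infinitely many). Two preliminary reductions: the $a_{k}$ of the original orbit are already pairwise distinct, since a repetition would make the orbit eventually periodic with finite image; and we may assume $\varphi(a_{k})\neq 0$ for all $k$, as otherwise $\Lambda^{\inf}(\sT,\varphi,a)=0$ and there is nothing to prove.

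The heart of the matter is to produce $p^{*}\in\Gamma$ (with a branch) whose \emph{entire} forward orbit is geodesic, $d(\sT^{i}p^{*})=i$ for all $i\geq 0$, and which is cofinal with $\{a_{k}\}$, in the sense that $\sT^{k}(p^{*})=a_{k+s}$ for all large $k$ and some fixed $s\geq 0$. I would use: (i) subpaths of shortest paths are shortest --- if $c,\sT c,\dots,\sT^{n}c=q$ realises $d(q)=n$ then $d(\sT^{i}c)=i$ for all $0\leq i\leq n$, else one could splice in a shorter path to $q$; and (ii) $k\mapsto d(a_{k})$ is unbounded, because each level set $\{q:\ d(q)=m\}$ is finite (it lies in $\sT^{m}(\Gamma)$) whereas $\{a_{k}\}$ is an infinite set of distinct points. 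Pick $k_{1}<k_{2}<\cdots$ with $d(a_{k_{j}})\to\infty$ and, for each $j$, a point $p_{j}\in\Gamma$ from which a shortest path to $a_{k_{j}}$ issues. As there are finitely many candidates, some value $p^{*}$ recurs for infinitely many $j$; for those $j$, (i) gives $d(\sT^{i}p^{*})=i$ for $0\leq i\leq d(a_{k_{j}})$, so letting $j\to\infty$ gives $d(\sT^{i}p^{*})=i$ for all $i$, while $\sT^{d(a_{k_{j}})}(p^{*})=a_{k_{j}}$ forces $\sT^{d(a_{k_{j}})+i}(p^{*})=a_{k_{j}+i}$ for all $i\geq 0$, which is the claimed cofinality.

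Finally, replace $a$ and $\{a_{k}\}$ by $p^{*}$ and $\{\sT^{k}p^{*}\}_{k\geq 1}$: this orbit has $d(\sT^{k}p^{*})=k$, hence satisfies both assertions, and $p^{*}$ is a non-trivial discontinuity. Since from some index on the new orbit is a shift of $\{a_{k}\}$, along which $\varphi$ is bounded and non-vanishing, $\Lambda^{\inf}(\sT,\varphi,p^{*})=\Lambda^{\inf}(\sT,\varphi,a)$ --- a liminf of $n$-th roots of products of $\varphi$-values along an orbit is unchanged by a finite shift --- so it suffices to prove Theorem~\ref{thm:lower} for orbits in this normal form. The step I expect to be the main obstacle is exactly the construction of $p^{*}$: a tail of $\{a_{k}\}$ itself need not be geodesic, because the orbit may acquire a ``shortcut'' where it crosses an early image of another discontinuity, forcing one to switch discontinuity; and the switch must be made without shrinking $\Lambda^{\inf}$, which is why cofinality with $\{a_{k}\}$ (not merely the geodesic property) is imposed when choosing $p^{*}$.
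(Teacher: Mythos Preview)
Your argument is correct and follows the same idea as the paper --- if the property fails, pass to a cofinal discontinuity orbit based at a different point of $\Gamma$ --- but where the paper's two-sentence proof simply says ``choose $\{b_k\}$ instead,'' you supply the mechanism (the depth function $d$, its unboundedness along $\{a_k\}$, and pigeonhole on the finitely many base points in $\Gamma$) that makes the replacement well-defined and terminating, and you also justify the invariance of $\Lambda^{\inf}$ under the switch, which the paper omits entirely. One small caveat: your equality $\Lambda^{\inf}(\sT,\varphi,p^*)=\Lambda^{\inf}(\sT,\varphi,a)$ needs $\varphi\neq 0$ at the finitely many points $\sT^m p^*$ \emph{before} the two orbits merge, which your preliminary reduction on $\{a_k\}$ alone does not guarantee; this is harmless for Theorem~\ref{thm:lower} (one can enlarge $k_0$ past those points), but strictly speaking it is an extra case to note.
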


\begin{proof}
    If this isn't the case, then there would exist another discontinuity orbit \({\{b_k\}}_{k=1}^{\infty}\) such that \(a_m = b_n\) for some \(m,n\in \bN\), \(m<n\).
    In this case we can choose \({\{b_k\}}_k\) instead of  \({\{a_k\}}_k\).
\end{proof}

\begin{lemma}
    \label{lem:disc-orbit}
    There exists \(k_0 \in \bN\) such that \(\sT^{-1}a_{k} \cap \Delta = \{a_{k-1}\}\)  for \(k \ge k_0\).
\end{lemma}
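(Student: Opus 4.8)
The plan is to establish the two inclusions in $\sT^{-1}a_k \cap \Delta = \{a_{k-1}\}$ separately; the first is immediate and the second is where the work lies.

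\textbf{The inclusion $\{a_{k-1}\} \subseteq \sT^{-1}a_k \cap \Delta$.} By Lemma~\ref{lem:ak-start} (applied with $n=0$) no $a_j$ with $j \ge 1$ lies in $\Gamma$, so for $k \ge 2$ the map $\sT$ is continuous at $a_{k-1}$ with $\sT a_{k-1} = a_k$; hence $a_{k-1} \in \Delta$ and $a_{k-1} \in \sT^{-1}a_k$. The same observation shows that $a_1, a_2, \dots$ is a genuine forward orbit of $\sT$, and since it is infinite its points are pairwise distinct (an equality $a_i = a_j$ with $1 \le i < j$ would make the orbit eventually periodic, hence finite). I will use this distinctness repeatedly.

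\textbf{Reduction to one orbit at a time.} Write $\Delta = O_1 \cup \dots \cup O_M$ as the union of the (at most $2N$) discontinuity orbits, taking $O_1 = \{a_k\}_{k \ge 0}$ with $a_0 = a$. It suffices to find, for each $i$, an index $k_i$ with $\sT^{-1}a_k \cap O_i \subseteq \{a_{k-1}\}$ whenever $k \ge k_i$, since then $k_0 := \max\{2, k_1, \dots, k_M\}$ does the job. If $O_i$ is \emph{finite}, then the set of its one-sided $\sT$-images is finite, so by distinctness of the $a_k$ we have $\sT^{-1}a_k \cap O_i = \emptyset$ for all large $k$. For \emph{infinite} $O_i$ we may in addition assume throughout that $\sT^{-1}a_k \cap O_i$ contains no point of $\Gamma$: each $c_\ell \in \Gamma$ has two fixed one-sided $\sT$-images, so by distinctness of the $a_k$ it lies in $\sT^{-1}a_k$ for at most two values of $k$, which excludes only finitely many $k$.

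\textbf{The main step.} Fix an infinite orbit $O_i = \{d_m\}_{m \ge 0}$ and suppose, for contradiction, that there are infinitely many $k$ with some $p_k \in O_i \cap \sT^{-1}a_k$ satisfying $p_k \ne a_{k-1}$ and, by the previous paragraph, $p_k \notin \Gamma$. Then $\sT$ is continuous at $p_k$, so writing $p_k = d_{m_k}$ we get $a_k = \sT d_{m_k} = d_{m_k + 1} \in O_i$; as $\{a_k\}$ is infinite, $O_i \cap \{a_k : k \ge 1\}$ is infinite. But if $a_k = d_m$ then $a_{k+j} = d_{m+j}$ for all $j \ge 0$, so the two orbits eventually coincide: $O_i = F_i \cup \{a_k : k \ge \ell_i\}$ for some finite set $F_i$ and some $\ell_i \ge 0$. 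Now pick $k$ with $k \ge \max\{2, \ell_i + 1\}$ and $a_k \notin \sT(F_i)$ (possible since $F_i$ is finite and the $a_k$ distinct). Any $p \in O_i \cap \sT^{-1}a_k$ lies in $F_i$, which is impossible since it would give $a_k \in \sT(F_i)$, or equals $a_j$ for some $j \ge \ell_i$, in which case $a_{j+1} = \sT a_j = a_k$ and distinctness force $j = k-1$, i.e.\ $p = a_{k-1}$; this contradicts the choice of $p_k$. Hence only finitely many $k$ are bad, which supplies the required $k_i$ and finishes the proof.

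\textbf{Expected difficulty.} No single estimate is hard; the point that needs care is the bookkeeping forced by the one-sided convention for $\sT^{-1}$ together with $\cI$ being the disjoint union of the $\overline{\omega}$. A point lying over some $c_\ell$ carries two one-sided $\sT$-images, each of which initiates one of the finitely many orbits $O_j$, and it is precisely for this reason that one must argue with the entire finite collection of discontinuity orbits rather than with $\{a_k\}$ alone. Lemma~\ref{lem:ak-start} is exactly what rules out the orbit $\{a_k\}$ itself ever visiting $\Gamma$, so that this orbit is unambiguous and the distinctness argument is legitimate.
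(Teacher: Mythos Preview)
Your proof is correct and follows essentially the same idea as the paper's: once another discontinuity orbit meets \(\{a_k\}\), the two coincide from that point on, and since there are only finitely many discontinuity orbits this ``merging'' can happen only finitely often. The paper's argument is a terse sketch of this, while you have carefully separated out the edge cases (finite orbits, points of \(\Gamma\) in \(\sT^{-1}a_k\), the distinctness of the \(a_k\)) that the paper glosses over; your explicit decomposition \(O_i = F_i \cup \{a_k : k \ge \ell_i\}\) is precisely the formal content behind the paper's remark that ``the orbit \(\{b_k\}\) and \(\{a_k\}\) are the same after this point''.
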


\begin{proof}
    First observe that \(a_k \notin \Gamma\) for every \(k\in\bN\) by Lemma~\ref{lem:ak-start}.
    We must show that there exists \(k_0 \in \bN\) such that \(\sT^{-1}a_{k} \cap \Delta = \{a_{k-1}\}\) whenever \(k \ge k_0\).
    If this is false, it means there is another discontinuity orbit \({\{b_k\}}_{k=1}^{\infty}\subset \Delta\) such that \(\sT b_{j} = a_{m}\) for some \(j \in \bN\), \(m \geq k_0\).
    If this happens we increase \(k_0\) to be equal to \(m+1\).
    This can't happen again with the same discontinuity orbit since the orbit \(\{b_k\}\) and \(\{a_k\}\) are the same after this point.
    Since there are only a finite number of discontinuity orbits we only need to increase \(k_0\) at most a finite number of times.
\end{proof}

The above two lemmas are sufficient for the purposes of this section but, in a later section, we will prove Lemma~\ref{lem:delta}, a more detailed yet similar result.
Let \(k_0 \in \bN\) be fixed for the remainder of this section as established in Lemma~\ref{lem:disc-orbit}.

For each \(n\in \bN\), let \(\Omega_{n}\) denote the maximal partition into open intervals of a full measure subset of \([0,1]\) such that  \(\left.\sT^{n}\right|_{\omega}\) is strictly monotone and continuous for each \(\omega \in \Omega\).
Additionally, let \(\varphi_{n} = \prod_{k=0}^{n-1} \varphi \circ \sT^{k}\).
Iterating the definition~\eqref{eq:L-defin} of the transfer operator we obtain, for all \(n\in\bN\),
\begin{equation}%
    \label{eq:L-iterate}
    \cL^{n} h
    =
    \sum_{\omega \in \Omega_n} (\varphi_{n} \cdot h) \circ {\left({\left.\sT^n\right|}_{\omega}\right)}^{-1} \cdot \characteristic{\sT^{n}\omega}.
\end{equation}

Since it was assumed that \(\cC^\infty(\cI) \subseteq \fB\) and \(\cL\) is continuous on \(\fB\) we know that the set $\bigcup_{n=0}^{\infty} \cL^{n}(\cC^{\infty}(\cI))$ is contained within \(\fB\).
Let \(\fA_{0} \subseteq \fB\) denote the span of this set and let  \(\fA\) denote the completion of \(\fA_{0}\) in the \(\snorm{\cdot}\)-norm.
Observe that \(\fA \subseteq \fB\) is a complete invariant subspace.
In particular \((\fA,\snorm{\cdot})\) is a Banach space.
It may happen that \(\fA = \fB\) but this can't be assumed in general.

\begin{lemma}
    \label{lem:pw-cont}
    Any $g\in \fA_{0}$ is continuous except for at a finite set of points in \(\Delta \cup \Gamma\).
    Furthermore, for any $k\in \mathbb{N}$, $g(a_{k}^{+})$ and $g(a_{k}^{-})$
    are finite.
\end{lemma}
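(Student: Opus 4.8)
The plan is to reduce the statement to a single term of a single iterate of $\cL$ and then to trace exactly where discontinuities can be created.

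First I would use linearity. By construction $\fA_{0}$ is the linear span of $\bigcup_{n\ge 0}\cL^{n}(\cC^{\infty}(\cI))$, so a generic $g\in\fA_{0}$ is a finite linear combination of functions $\cL^{n}h$ with $h\in\cC^{\infty}(\cI)$. The set of points at which a finite sum fails to be continuous is contained in the union of the corresponding sets for the summands, and a finite linear combination of functions each possessing finite one-sided limits at a point again possesses finite one-sided limits there. Hence it suffices to prove both assertions for $g=\cL^{n}h$, and then, via the iterated formula~\eqref{eq:L-iterate}, for a single summand
\[
    g_{\omega}:=(\varphi_{n}\cdot h)\circ\bigl(\left.\sT^{n}\right|_{\omega}\bigr)^{-1}\cdot\characteristic{\sT^{n}\omega},\qquad \omega\in\Omega_{n},
\]
noting that $\Omega_{n}$ is a finite partition since its boundary is contained in the finite set $\bigcup_{m=0}^{n-1}\sT^{-m}\Gamma$.

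Next I would analyse one such $g_{\omega}$. On the open interval $\sT^{n}\omega$ the inverse branch $\bigl(\left.\sT^{n}\right|_{\omega}\bigr)^{-1}$ is a homeomorphism onto $\omega$ extending to a homeomorphism $\overline{\sT^{n}\omega}\to\overline{\omega}$. Since $\omega\in\Omega_{n}$, for each $0\le k<n$ the iterate $\left.\sT^{k}\right|_{\omega}$ maps $\omega$ continuously into a single element of $\Omega$, so $\varphi\circ\sT^{k}$ is continuous and bounded on $\omega$ with continuous extension to $\overline{\omega}$; thus $\varphi_{n}=\prod_{k=0}^{n-1}\varphi\circ\sT^{k}$ has the same property, and so does $h$ (reading $\cC^{\infty}(\cI)$ as smoothness up to the boundary of each piece, so that $h$ is bounded there). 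Therefore $\left.g_{\omega}\right|_{\sT^{n}\omega}$ is continuous and extends continuously to $\overline{\sT^{n}\omega}$, while $g_{\omega}\equiv 0$ off $\overline{\sT^{n}\omega}$. Consequently $g_{\omega}$ is continuous on $[0,1]$ except possibly at the two endpoints of $\sT^{n}\omega$, and it has finite left and right limits at \emph{every} point of $[0,1]$. Summing and recalling the reduction, the same holds for $g$, which already gives the ``furthermore''.

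It remains to locate the discontinuities. The key step is to check that every endpoint of $\sT^{n}\omega$, as $\omega$ ranges over $\Omega_{n}$, lies in $\Delta\cup\Gamma$. An endpoint $p$ of such an $\omega$ either belongs to $\{0,1\}\subseteq\Gamma$ or satisfies $\sT^{m}p=c_{\ell}\in\Gamma$ for some $0\le m\le n-1$; approaching $p$ from inside $\omega$, the first $m$ iterates of $\sT$ are continuous near $p$ while $\sT^{m}$ has one-sided limit $c_{\ell}^{\sigma}$ for a suitable $\sigma\in\{+,-\}$, whence the corresponding one-sided limit of $\sT^{n}$ at $p$ equals $\sT^{n-m}(c_{\ell}^{\sigma})\in\Delta$. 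Since $\Omega_{n}$ is finite, only finitely many such points occur, and all lie in $\Delta\cup\Gamma$. This completes the plan; the only genuinely delicate part is this last bookkeeping — verifying that \emph{all} discontinuities created by applying $\cL^{n}$ are images of discontinuity points of $\sT$ — everything else being a routine continuity/compactness argument whose only inputs are the boundedness and continuous boundary-extension of $\varphi$.
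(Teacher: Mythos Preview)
Your proof is correct and follows essentially the same route as the paper: reduce to a single $\cL^{n}h$, use the iterated formula~\eqref{eq:L-iterate} to write it as a finite sum of terms supported on the intervals $\sT^{n}\omega$, observe each term is continuous with finite boundary values there, and then verify that the endpoints of $\sT^{n}\omega$ are forward images of points of $\Gamma$ and hence lie in $\Delta$. The paper phrases the last step via the cylinder decomposition $\omega=\omega_{i_0}\cap\sT^{-1}\omega_{i_1}\cap\cdots$ (so an endpoint of $\sT^{n}\omega$ is an endpoint of some $\sT^{k}\omega_{j}$), whereas you trace the iterates of a boundary point $p$ of $\omega$ until they hit $\Gamma$ and then push forward; these are two descriptions of the same computation.
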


\begin{proof}
    We consider \(g=\cL^n h\) for some \(h\in\cC^\infty(\cI)\), \(n\in \bN\).
    This means~\eqref{eq:L-iterate} that
    \[
        g =
        \sum_{\omega \in \Omega_n} (\varphi_n \cdot h) \circ {\left({\left.\sT^n\right|}_{\omega}\right)}^{-1} \cdot \characteristic{\sT^{n}\omega}.
    \]
    Each summand is continuous on the interval \(\sT^{n}\omega\) and zero elsewhere.
    That it is only continuous is because \(\sT\) is not required to be better.
    To complete the proof we must show that the end points of each interval \(\sT^{n}\omega\) lie in \(\Delta\).
    Observe that each \(\omega \in \Omega_n\) is of the form
    \[
        \omega = \omega_{i_{0}} \cap \sT^{-1}(\omega_{i_{1}}) \cap ... \cap \sT^{-(n-1)}(\omega_{i_{n-1}}).
    \]
    where \(\Omega = \{\omega_j\}_{j=1}^{N}\) and \((i_{0},...,i_{n-1}) \in {\{1,\ldots,N\}}^{n}\).
    From this we see that a point can be an end point of the interval \(\sT^{n}\omega\) only if it is an end point of some \(\sT^k\omega_j\) where \(0\leq k\leq n\) and \(j\in  {\{1,\ldots,N\}}\).
    This set of points is contained in the set \(\Delta\) by definition.
    The continuity and boundedness of \(\varphi\) and \(h\), means that the left and right limits, $g(a_{k}^{+})$ and $g(a_{k}^{-})$, are finite.
\end{proof}

As a convenience, for any \(h:[0,1]\to \bC\) and \(x\in [0,1]\),
let 
\begin{equation}
    \label{eq:def-jump}
    \jump{h}{x} = h(x^{+}) - h(x^{-}),
\end{equation}
i.e., this is the ``jump'' in \(h\) at the point \(x\).
Lemma~\ref{lem:pw-cont} implies that \(\jump{h}{x}\) is defined for all \(h\in \fA_0\), \(x\in [0,1]\) and is zero when \(x\notin \Delta\).
Since \(\sT\) is not required to be orientation preserving we need a way to keep track of this behaviour.
For each \(k\), let
\begin{equation}
    \label{eq:def-sign}
    \gamma_k =
    \begin{cases}
        1  & \text{if \(\sT\) is increasing at \(a_k\)}, \\
        -1 & \text{if \(\sT\) is decreasing at \(a_k\)}.
    \end{cases}
\end{equation}
The following tells that the jumps behave, under the action of \(\cL\), like a shift.

\begin{lemma}
    \label{lem:calc-jump}
    If $g \in \fA_{0}$ and \(k\geq k_0\) then
    \[
        \jump{\cL g}{a_{k}} =  \gamma_{k-1} \ \varphi(a_{k-1}) \ \jump{g}{a_{k-1}}.
    \]
\end{lemma}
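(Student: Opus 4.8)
The plan is to start from the pointwise expression \eqref{eq:L-defin} for $\cL g$ and localise near the point $a_k$, determining exactly which of the summands $(\varphi\cdot g)\circ (\left.\sT\right|_{\omega})^{-1}\cdot\characteristic{\sT\omega}$, $\omega\in\Omega$, can be discontinuous there. A summand can jump at $a_k$ for two reasons: because the indicator $\characteristic{\sT\omega}$ jumps at $a_k$, or because the transported factor $(\varphi\cdot g)\circ(\left.\sT\right|_{\omega})^{-1}$ does.

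First I would rule out the indicators. A jump of $\characteristic{\sT\omega}$ at $a_k$ would mean $a_k\in\partial(\sT\omega)$, hence $\sT^{-1}a_k\cap\Gamma\neq\emptyset$; but this is excluded by Lemma~\ref{lem:ak-start} (together with Lemma~\ref{lem:disc-orbit} and the fact that $a_{k-1}\notin\Gamma$, after enlarging $k_0$ if necessary so that $k_0\ge 2$). Consequently each $\characteristic{\sT\omega}$ is locally constant near $a_k$. Next I would locate the single transported factor that survives: since $\varphi$ is continuous on each $\omega$ and $(\left.\sT\right|_{\omega})^{-1}(a_k)$ is always an interior point of $\omega$ (so not in $\Gamma$), Lemma~\ref{lem:pw-cont} shows that $(\varphi\cdot g)\circ(\left.\sT\right|_{\omega})^{-1}$ can jump at $a_k$ only when $(\left.\sT\right|_{\omega})^{-1}(a_k)\in\Delta$. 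By Lemma~\ref{lem:disc-orbit} the only point of $\sT^{-1}a_k\cap\Delta$ is $a_{k-1}$, which (being outside $\Gamma$) lies in a unique $\omega_{*}\in\Omega$; moreover $\sT(a_{k-1})=a_k$, so $a_k\in\sT\omega_{*}$ and $(\left.\sT\right|_{\omega_{*}})^{-1}(a_k)=a_{k-1}$. Putting this together, on a neighbourhood of $a_k$ one has
\[
\cL g = (\varphi\cdot g)\circ(\left.\sT\right|_{\omega_{*}})^{-1} + R,
\]
where $R$ gathers the remaining summands and is continuous at $a_k$.

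It then remains to compute the one-sided limits of the surviving term. The inverse branch $(\left.\sT\right|_{\omega_{*}})^{-1}$ is a homeomorphism onto $\sT\omega_{*}$ which is increasing if $\gamma_{k-1}=1$ and decreasing if $\gamma_{k-1}=-1$; in both cases $\varphi$ is continuous at $a_{k-1}$ with value $\varphi(a_{k-1})$, and the limits $g(a_{k-1}^{\pm})$ are finite by Lemma~\ref{lem:pw-cont}. Tracking how $x\to a_k^{\pm}$ pushes forward under the inverse branch to $a_{k-1}^{\pm}$ or $a_{k-1}^{\mp}$ according to the sign $\gamma_{k-1}$, and using continuity of $R$ at $a_k$, one obtains $\cL g(a_k^{+})-\cL g(a_k^{-})=\gamma_{k-1}\,\varphi(a_{k-1})\,\bigl(g(a_{k-1}^{+})-g(a_{k-1}^{-})\bigr)$, which is exactly the asserted identity.

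The whole argument is essentially bookkeeping, and the step I expect to require the most care — and which I regard as the only real obstacle — is verifying that no indicator $\characteristic{\sT\omega}$ jumps at $a_k$ and that $R$ is genuinely continuous there; a single stray preimage of $a_k$ in $\Delta\cup\Gamma$, or $a_k$ lying on the boundary of some $\sT\omega$, would destroy the clean shift structure. This is precisely what the reductions in Lemmas~\ref{lem:ak-start} and~\ref{lem:disc-orbit} were set up to guarantee, so once the local decomposition above is in place the computation is routine.
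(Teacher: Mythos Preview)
Your proposal is correct and follows essentially the same route as the paper's proof: both arguments decompose $\cL g$ into the summands indexed by $\omega\in\Omega$, use $\sT^{-1}a_k\cap\Gamma=\emptyset$ (via Lemmas~\ref{lem:ak-start}/\ref{lem:disc-orbit}) to kill indicator jumps, then invoke Lemma~\ref{lem:pw-cont} together with $\sT^{-1}a_k\cap\Delta=\{a_{k-1}\}$ to isolate the single contributing branch, and finally read off the factor $\gamma_{k-1}\varphi(a_{k-1})$ from orientation and continuity of $\varphi$ at $a_{k-1}$. Your write-up is slightly more explicit about the local decomposition $\cL g=(\varphi\cdot g)\circ(\left.\sT\right|_{\omega_*})^{-1}+R$ and the one-sided limit computation, but there is no substantive difference in method.
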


\begin{proof}
    In order to calculate \(\jump{\cL g}{a_{k}}\) we observe that \(\cL g\) is given by a sum of terms of the form
    \[
        (\varphi \cdot g) \circ {\left({\left.\sT\right|}_{\omega}\right)}^{-1} \cdot \characteristic{\sT\omega}.
    \]
    Because \(k\geq k_0\) we know that \(\sT^{-1}a_k \cap \Gamma = \emptyset\), in other words, none of the end points of the \(T\omega\) equal \(a_k\) (Lemma~\ref{lem:disc-orbit}).
    It therefore remains to consider discontinuities in \((\varphi \cdot g)\) at points in \(\sT^{-1}a_k\).
    Since the discontinuities of \(\varphi\) lie in \(\Gamma\), Lemma~\ref{lem:ak-start}, \(\varphi\) is continuous at points in \(\sT^{-1} a_k\).
    By Lemma~\ref{lem:pw-cont}, we know that \(g\) can only have discontinuities in \(\Delta\).
    Since \(\Delta \cap \sT^{-1}a_k = a_{k-1}\) this is the only term which contributes.
    The magnitude of the jump is equal to \(\varphi(a_{k-1}) \jump{g}{a_{k-1}}\).
    The sign depends on whether \({\left.\sT\right|}_{\omega}\) is orientation preserving or reversing for the \(\omega\) which contains \(a_{k-1}\) and so~\eqref{eq:def-sign} is given by \(\gamma_{k-1}\).
\end{proof}

In the subsequent argument it will be convenient to guarantee the existence of some element of \(\fA\) which has a non-zero jump at \(a_{k_0}\) but has no jump at \(a_k\) for all \(k>k_0\).
We can't just take \(\characteristic{[a_{k_0},1]}\) because we can't guarantee that such a function is in \(\fA_0\).
We can however take advantage of the dynamics to construct what we require.

\begin{lemma}
    \label{lem:h-K}
    There exists \(h_{\cK}\in \fA_0\) such that
    \(\jump{h_{\cK}}{a_{k_0}} = 1\)
    and, 
    whenever \(k>k_0\),
    \(\jump{h_{\cK}}{a_k} = 0\).
\end{lemma}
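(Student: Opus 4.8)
The goal is to construct an element of $\fA_0$ with a single "active" jump at $a_{k_0}$ and no jump further along the orbit. The plan is to start from a smooth bump and push it forward by the transfer operator. First I would pick a function $h \in \cC^\infty(\cI)$ supported in a small neighbourhood of the point $\sT^{-k_0} a_{k_0}$ that lies in $\Gamma$ — recall from Lemma~\ref{lem:ak-start} (and the discussion preceding it) that there is a discontinuity point $c_\ell \in \Gamma$ with $\sT^{k_0}(c_\ell^{\pm})$ following the orbit $\{a_k\}$. Choosing $h$ to be supported in $\overline{\omega}$ for the single $\omega\in\Omega$ lying on the appropriate side of $c_\ell$, and equal to a nonzero constant near $c_\ell$, one gets that $h$ already has a jump at $c_\ell$. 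Then set $h_{\cK} = \cL^{k_0} h$ (up to a normalising constant). By construction $h_{\cK}\in \fA_0$, and repeatedly applying Lemma~\ref{lem:calc-jump} — more precisely its $k_0$-fold iterate — shows $\jump{h_{\cK}}{a_{k_0}}$ equals $\prod_{j=1}^{k_0}\gamma_{j-1}\varphi(a_{j-1})$ times the jump of $h$ at $c_\ell$, which is a nonzero number (here I use that $\varphi$ is bounded away from... wait — $\varphi$ need not be bounded away from zero in Theorem~\ref{thm:lower}).

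This forces a correction: $\varphi$ is only assumed bounded, so some $\varphi(a_j)$ could vanish and the forward push could kill the jump. To handle this cleanly, instead of iterating from a single seed it is better to iterate just enough to land at $a_{k_0}$ and argue directly. Concretely, the cleaner route: take $h$ smooth, supported near $c_\ell$ on one side, with a jump at $c_\ell$; but rather than pushing $k_0$ steps at once and worrying about intermediate vanishing of $\varphi$, note that Lemma~\ref{lem:calc-jump}'s proof isolates exactly one contributing preimage branch, so the jump $\jump{\cL^{k_0}h}{a_{k_0}}$ is literally $\big(\prod_{j=0}^{k_0-1}\gamma_{j}\,\varphi(a_{j})\big)\jump{h}{a_0}$ where $a_0$ denotes the seed discontinuity $c_\ell$. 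If this product is nonzero we simply rescale $h_{\cK}$ by its reciprocal and we are done with the first claim. If the product is zero, then $\varphi(a_j)=0$ for some $0\le j<k_0$; in that case I would instead seed at a later point $a_{j+1}$ along the orbit (still a discontinuity image, hence a jump of some $\cL^{j+1}h'\in\fA_0$ lives there) — but more simply, one can just increase $k_0$: the statement of Lemma~\ref{lem:h-K} is used with the $k_0$ from Lemma~\ref{lem:disc-orbit}, and enlarging $k_0$ only strengthens that lemma, so WLOG we may assume $\varphi(a_k)\ne 0$ for the relevant indices, or restart the orbit past the last zero. I would add a sentence making this reduction explicit.

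For the second claim — that $\jump{h_{\cK}}{a_k}=0$ for all $k>k_0$ — I would use Lemma~\ref{lem:pw-cont}: $h_{\cK}=\cL^{k_0}h$ has discontinuities only at endpoints of the intervals $\sT^{k_0}\omega$, $\omega\in\Omega_{k_0}$. Since $h$ is supported in a single $\overline{\omega_0}$ with $\omega_0$ adjacent to $c_\ell$, only the branches of $\sT^{k_0}$ whose domain meets $\overline{\omega_0}$ contribute, so $h_{\cK}$ is supported on $\sT^{k_0}\overline{\omega_0}$-type pieces and its discontinuity set is contained in $\{\sT^j(c_\ell^{\pm}): 0\le j\le k_0\}$ together with images of the finitely many other discontinuities bordering these pieces. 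None of these is $a_k$ for $k>k_0$: the orbit of $c_\ell^{\pm}$ after step $k_0$ is tracking $\{a_k\}$ only up to index $k_0$ by construction (by Lemma~\ref{lem:ak-start}'s normalisation the backward orbit of $a_1,\dots,a_{k_0}$ meets $\Gamma$ only at the seed, so the forward iterates $\sT^{j}(c_\ell^\pm)$ for $j>k_0$ need not equal $a_k$) — I should double-check this matches the indexing, and if the forward orbit does continue to coincide with $\{a_k\}$ then $h_{\cK}$ would have nonzero jumps there, in which case one subtracts a further push-forward to cancel them; but since only finitely many discontinuity orbits exist and by Lemma~\ref{lem:disc-orbit} they have merged, a single finite correction of the form $h_{\cK} - \sum(\text{lower-order pushes})$ suffices.

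The main obstacle I expect is precisely this bookkeeping about which points in $\Delta$ can appear as discontinuities of $\cL^{k_0}h$ for a localised seed $h$, i.e. making sure the support of the constructed function is small enough that no spurious jump lands on $a_k$ with $k>k_0$ while a genuine jump survives at $a_{k_0}$; the vanishing-of-$\varphi$ issue is secondary and handled by enlarging $k_0$. I would organise the proof as: (i) fix the seed $c_\ell\in\Gamma$ and the adjacent interval $\omega_0$; (ii) build $h\in\cC^\infty(\cI)$ supported in $\overline{\omega_0}$, constant near $c_\ell$; (iii) compute $\jump{\cL^{k_0}h}{a_{k_0}}$ via iterated Lemma~\ref{lem:calc-jump}, reducing to the case the product of weights is nonzero; (iv) use Lemma~\ref{lem:pw-cont} and the localisation to identify all jumps of $\cL^{k_0}h$, subtract finitely many further push-forwards if needed to clear jumps at $a_k$, $k>k_0$; (v) normalise.
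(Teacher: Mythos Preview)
Your overall strategy --- seed with a smooth $h_0\in\cC^\infty(\cI)$ concentrated near a point of $\Gamma$ and take $h_\cK$ to be a scalar multiple of $\cL^{k_0}h_0$ --- is exactly the paper's approach. However, your execution of both claims has problems.

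For the nonzero jump at $a_{k_0}$, you cannot invoke Lemma~\ref{lem:calc-jump} iteratively from the seed: that lemma is stated only for $k\ge k_0$, and its proof relies on Lemma~\ref{lem:disc-orbit}, which says nothing about $\sT^{-1}a_k\cap\Delta$ for $k<k_0$. The paper avoids this by reading the jump directly from the explicit formula $\cL^{k_0}h_0=\sum_{\omega\in\Omega_{k_0}}(\varphi_{k_0}\cdot h_0)\circ(\sT^{k_0}|_\omega)^{-1}\cdot\characteristic{\sT^{k_0}\omega}$: the jump at $a_{k_0}$ receives contributions only from those $\omega$ with $a_{k_0}\in\partial(\sT^{k_0}\omega)$, i.e.\ from (one-sided) points in $\sT^{-k_0}a_{k_0}\cap\Gamma$, and one chooses $h_0$ nonzero at exactly one such point.

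The more serious gap is in the second claim. Your reasoning for $\jump{h_\cK}{a_k}=0$ when $k>k_0$ is tangled, and the proposed subtraction of further push-forwards is unnecessary. The point you are missing is: the discontinuities of $\cL^{k_0}h_0$, for \emph{any} $h_0\in\cC^\infty(\cI)$ (no localisation needed), lie in $\{\sT^j c:c\in\Gamma,\ 0\le j\le k_0\}$, as in the proof of Lemma~\ref{lem:pw-cont}; and Lemma~\ref{lem:ak-start} gives $\sT^{-n}a_k\cap\Gamma=\emptyset$ for all $n<k$. Hence for $k>k_0$ no $c\in\Gamma$ and $j\le k_0$ can satisfy $\sT^jc=a_k$, so $a_k$ is simply not in the discontinuity set of $\cL^{k_0}h_0$. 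Your parenthetical worry (``if the forward orbit does continue to coincide with $\{a_k\}$ then $h_\cK$ would have nonzero jumps there'') is misdirected: the forward orbit of the seed \emph{is} the sequence $\{a_k\}$, but only the first $k_0$ iterates appear as discontinuities of $\cL^{k_0}h_0$. No correction terms are required.

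Your handling of the $\varphi$-vanishing issue is fine and matches the paper's reduction (if some $\varphi(a_k)=0$ then $\Lambda^{\inf}=0$ and the theorem is vacuous).
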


\begin{proof}
    If \(h_0 \in \cC^{\infty}(\cI)\) (to be chosen later) then~\eqref{eq:L-iterate},
    \[
        \cL^{k_0} h_0
        =
        \sum_{\omega \in \Omega_{k_0}} (\varphi_{k_0} \cdot h_0)\circ {\left({\left.\smash{\sT^{k_0}}\right|}_{\omega}\right)}^{-1} \cdot \characteristic{\sT^{k_0}\omega}
        \in \fA_0.
    \]
    Similar to the proof of Lemma~\ref{lem:pw-cont}, we see that a point is a discontinuity of \(\cL^{k_0} h_0\) only if it is an end point of the interval \(\sT^n \omega\) where \(0 \leq n \leq {k_0}\), \(\omega \in \Omega\).
    Lemma~\ref{lem:ak-start} tells us that \(\sT^{-n}a_{k} \cap \Gamma = \emptyset\) when \(n < k\) and so \(\jump{\cL^{k_0} h_{\cK}}{a_k} = 0\) whenever \(k>k_0\).
    On the other hand, \(\sT^{-1}a_1 \subseteq \sT^{-k_0}a_{k_0}\) and \(\sT^{-1}a_1 \cap \Gamma \neq \emptyset\) according to Lemma~\ref{lem:ak-start}.
    It is possible that \( \sT^{-k_0}a_{k_0} \cap \Gamma\) contains more than a single point.
    We therefore choose \(h_0 \in \cC^{\infty}(\cI)\) such that it is non-zero at just one point in this set.
    It is even possible that \( \sT^{-k_0}a_{k_0} \cap \Gamma\) contains the same point twice since it might exist as both the left and right limit.
    However this is also not a problem since \(h_0\in \cC^\infty(\cI)\) can be chosen to be non-zero at one side of a point in \(\Gamma\) and zero the other side.
    This all means that \(\jump{\cL^{k_0} h_0}{a_{k_0}} \neq 0\) and, scaling appropriately, we insure that \(\jump{\cL^{k_0} h_0}{a_{k_0}} = 1\).
\end{proof}

Recall that \(\Lambda^{\inf} = \liminf_{n\to \infty} \abs{ \varphi_{n}(a_{1}) }^{\frac{1}{n}}\).
We may assume that \(\varphi(a_k)>0\) for all \(k\in \bN\) because, if this were not the case, then \(\Lambda^{\inf} = 0\) and Theorem~\ref{thm:lower} is trivially true since the essential spectral radius is never less than zero. Hence, we may also assume that \(\Lambda^{\inf} > 0\).
Define the sequence \( \left\{\alpha_k\right\}\) by setting \(\alpha_{k_0-1}=1\) and by requiring that
\begin{equation}
    \label{eq:def-alpha}
    \alpha_{k}
    = \frac{\gamma_{k-1}}{\varphi(a_{k-1})} \alpha_{k-1}
    \quad \text{whenever \(k \geq k_0\)}.
\end{equation}
This definition implies that \(\abs{\alpha_{k}} = \abs{\varphi_{k_0-1}(a_1)} / \abs{\varphi_{k}(a_1)}\)
and, moreover, that \(\limsup_{k\to\infty} \abs{\alpha_k}^{1/k} = {1}/{\Lambda^{\inf}}\).
For any \(\abs{\lambda} < \Lambda^{\inf}\), define the linear functional \(\ell_{\lambda}: \fA_{0} \to \bC\),
\begin{equation}
    \label{eq:def-l-func}
    \ell_{\lambda}(h) =
    \sum_{k=k_0}^{\infty}  \lambda^{k} \alpha_k \, \jump{h}{a_k}.
\end{equation}

\begin{lemma}%
    \label{lem:good-funcs}
    If \(\abs{\lambda} < {\Lambda^{\inf}}\) then \(\ell_{\lambda}\) extends to a linear functional on \(\fA\).
\end{lemma}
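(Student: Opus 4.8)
The plan is to show that $\ell_\lambda$ is bounded on $\fA_0$ with respect to $\snorm{\cdot}$, after which it extends uniquely to $\fA$ by density and continuity. The key tool is assumption (\ref{itm:infty}) of Theorem~\ref{thm:lower}, which gives $\norm{g}_{L^\infty} \le \snorm{g}$ for every $g \in \fA_0$ (since $\fA_0$ is the span of $\bigcup_n \cL^n(\cC^\infty(\cI))$). From this and the definition~\eqref{eq:def-jump} of the jump we get the crude pointwise bound $\abs{\jump{g}{a_k}} = \abs{g(a_k^+) - g(a_k^-)} \le 2\norm{g}_{L^\infty} \le 2\snorm{g}$ for every $k$, valid for all $g \in \fA_0$ by Lemma~\ref{lem:pw-cont}.

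Next I would feed this into the series~\eqref{eq:def-l-func}. We have $\abs{\ell_\lambda(g)} \le \sum_{k \ge k_0} \abs{\lambda}^k \abs{\alpha_k}\, \abs{\jump{g}{a_k}} \le 2\snorm{g} \sum_{k\ge k_0} \abs{\lambda}^k\abs{\alpha_k}$, so everything reduces to showing the numerical series $\sum_k \abs{\lambda}^k\abs{\alpha_k}$ converges when $\abs{\lambda} < \Lambda^{\inf}$. This is exactly where the identity noted just before the lemma is used: $\limsup_{k\to\infty}\abs{\alpha_k}^{1/k} = 1/\Lambda^{\inf}$. By the root test, $\limsup_k \bigl(\abs{\lambda}^k \abs{\alpha_k}\bigr)^{1/k} = \abs{\lambda}/\Lambda^{\inf} < 1$, so the series converges, giving $\abs{\ell_\lambda(g)} \le C_\lambda \snorm{g}$ with $C_\lambda = 2\sum_{k\ge k_0}\abs{\lambda}^k\abs{\alpha_k} < \infty$. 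This also shows the defining series for $\ell_\lambda(g)$ itself converges absolutely for each fixed $g \in \fA_0$, so $\ell_\lambda$ is well-defined on $\fA_0$ to begin with.

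Having established that $\ell_\lambda : \fA_0 \to \bC$ is linear (immediate, since $h \mapsto \jump{h}{a_k}$ is linear) and bounded with operator norm at most $C_\lambda$, the extension to $\fA$ is the standard bounded-linear-extension fact: $\fA_0$ is dense in $\fA$ by construction ($\fA$ is the $\snorm{\cdot}$-completion of $\fA_0$), and a bounded linear functional on a dense subspace of a Banach space extends uniquely to a bounded linear functional on the whole space with the same norm.

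I do not expect a serious obstacle here; the only thing to be careful about is that the $L^\infty$ bound in assumption (\ref{itm:infty}) is only assumed on $\bigcup_n \cL^n(\cC^\infty(\cI))$ rather than on all of $\fB$, but since $\fA_0$ is precisely the linear span of that set and the bound $\norm{g+g'}_{L^\infty} \le \norm{g}_{L^\infty} + \norm{g'}_{L^\infty} \le \snorm{g} + \snorm{g'}$ passes to finite linear combinations by the triangle inequality (using that $\snorm{\cdot}$ is a norm), the estimate $\norm{g}_{L^\infty} \le \snorm{g}$ in fact holds for all $g \in \fA_0$, which is all we need. If one wanted to be scrupulous, the only mild point is that $\snorm{\cdot}$ restricted to $\fA_0$ should genuinely be a norm (not a seminorm); this is automatic because $\fA_0 \subseteq \fB$ and $\snorm{\cdot}$ is the Banach-space norm on $\fB$.
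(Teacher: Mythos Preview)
Your approach matches the paper's exactly: bound $|J(g,a_k)|\le 2\|g\|_{L^\infty}\le 2\snorm{g}$, invoke $\limsup_k|\alpha_k|^{1/k}=1/\Lambda^{\inf}$ and the root test to get $C_\lambda=\sum_{k\ge k_0}|\lambda|^k|\alpha_k|<\infty$, conclude $|\ell_\lambda(g)|\le 2C_\lambda\snorm{g}$ on $\fA_0$, and extend by density.

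Your final paragraph, however, flags a real subtlety and then mishandles it. You correctly note that assumption~(\ref{itm:infty}) is stated only for $h\in\bigcup_n\cL^n(\cC^\infty(\cI))$, not for its linear span $\fA_0$. But your triangle-inequality argument does not repair this: from $\|g+g'\|_{L^\infty}\le\|g\|_{L^\infty}+\|g'\|_{L^\infty}\le\snorm{g}+\snorm{g'}$ you cannot conclude $\|g+g'\|_{L^\infty}\le\snorm{g+g'}$, since $\snorm{g}+\snorm{g'}$ may be much larger than $\snorm{g+g'}$. (A two-dimensional model: with $\snorm{(x,y)}=|x-y|+\epsilon(|x|+|y|)$ and the sup-norm in the role of $\|\cdot\|_{L^\infty}$, the inequality holds at $(1,0)$ and $(0,1)$ but fails at $(1,1)$ for small $\epsilon$.) The paper itself simply asserts $\|h\|_{L^\infty}\le\snorm{h}$ for $h\in\fA_0$ without comment, writing ``here we use the assumption that $\norm{\cdot}_{L^\infty}\le\norm{\cdot}_{\fB}$'' --- so you have not created a gap that was not already there, and in practice one reads assumption~(\ref{itm:infty}) as intended to hold wherever the $L^\infty$ norm makes sense. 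But the specific justification you offer is incorrect and should be dropped.
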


\begin{proof}
    For any \(h\in \fA_{0}\) we know that \(\abs{\jump{h}{a_k}}\leq 2 \norm{h}_{L^{\infty}} \leq 2 \snorm{h}\) (here we use the assumption that \(\norm{\cdot}_{L^\infty} \leq \norm{\cdot}_{\fB}\)).
    Additionally, since \(\abs{\lambda} < \Lambda^{\inf}\),
    \(C_{\lambda} = \sum_{k=k_0}^{\infty} \abs{ \lambda^k \alpha_k }\) is finite.
    Consequently, \(\abs{\ell_{\lambda}(h)}\leq 2 C_{\lambda} \snorm{h}\) for all \(h\in \fA_{0}\) and so \(\ell_{\lambda}\) extends to \(\fA\).
\end{proof}

Let \(h_{\cK}\in \fA_0\), as constructed in Lemma~\ref{lem:h-K}, be fixed for the remainder of this section.

\begin{lemma}
    \label{lem:calc-l-hK}
    If \(\abs{\lambda} < {\Lambda^{\inf}}\) then \(\ell_{\lambda}(h_{\cK}) = \lambda^{k_0} \alpha_{k_0}\).
    In particular \(\ell_{\lambda}\) is non-zero as an element of \(\fA^{*}\), the dual of \(\fA\).
\end{lemma}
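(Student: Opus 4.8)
The plan is simply to evaluate the series defining \(\ell_{\lambda}\) at the element \(h_{\cK}\) produced by Lemma~\ref{lem:h-K} and observe that all but one term disappears. First I would recall that, by Lemma~\ref{lem:good-funcs}, the series
\(\ell_{\lambda}(h_{\cK}) = \sum_{k=k_0}^{\infty} \lambda^{k} \alpha_k \, \jump{h_{\cK}}{a_k}\)
converges absolutely, so its value is genuinely the sum of its terms and is unaffected by discarding those that vanish. By the two defining properties of \(h_{\cK}\) from Lemma~\ref{lem:h-K}, namely \(\jump{h_{\cK}}{a_{k_0}} = 1\) and \(\jump{h_{\cK}}{a_k} = 0\) for every \(k > k_0\), only the term with \(k = k_0\) survives, which yields \(\ell_{\lambda}(h_{\cK}) = \lambda^{k_0} \alpha_{k_0}\).

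For the ``in particular'' assertion I would check that \(\alpha_{k_0} \neq 0\). Evaluating the recursion~\eqref{eq:def-alpha} at \(k = k_0\) with the initial value \(\alpha_{k_0-1} = 1\) gives \(\alpha_{k_0} = \gamma_{k_0-1}/\varphi(a_{k_0-1})\); this is non-zero since \(\gamma_{k_0-1} \in \{-1,1\}\) and, under the standing assumption that \(\varphi(a_k) > 0\) for all \(k\) (the complementary case \(\Lambda^{\inf} = 0\) being irrelevant here, as \(\abs{\lambda} < \Lambda^{\inf}\) forces \(\Lambda^{\inf} > 0\)), one has \(\varphi(a_{k_0-1}) \neq 0\). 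Hence for \(\lambda \neq 0\) we obtain \(\ell_{\lambda}(h_{\cK}) = \lambda^{k_0} \alpha_{k_0} \neq 0\), so \(\ell_{\lambda}\) is a non-zero element of \(\fA^{*}\).

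I do not expect any real obstacle: the substance of this statement has already been absorbed into Lemmas~\ref{lem:h-K} and~\ref{lem:good-funcs}. The only point that merits a moment's care is that the infinite series collapses to a single term rather than merely being dominated by it; but this is immediate, since \(\jump{h_{\cK}}{a_k} = 0\) for \(k > k_0\) is an exact identity, not an estimate.
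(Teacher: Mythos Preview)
Your proof is correct and follows essentially the same approach as the paper: substitute $h_{\cK}$ into the defining series for $\ell_{\lambda}$, use the properties of $h_{\cK}$ from Lemma~\ref{lem:h-K} to collapse the sum to its $k=k_0$ term, and read off the value. You are in fact more careful than the paper on the ``in particular'' clause, explicitly verifying $\alpha_{k_0}\neq 0$ via the recursion and noting the restriction $\lambda\neq 0$ (which the paper leaves implicit).
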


\begin{proof}
    Lemma~\ref{lem:h-K} tells that \(\jump{h_{\cK}}{a_k} = 0\) whenever \(k>k_0\), hence
    \[
        \ell_{\lambda}(h_{\cK})
        =  \sum_{k=k_0}^{\infty} \lambda^k \alpha_k     \, \jump{h_{\cK}}{a_k}
        =  \lambda^{k_0} \alpha_{k_0} \jump{h_{\cK}}{a_{k_0}}.
    \]
    This means that \(\ell_{\lambda}(h_{\cK}) = \lambda^{k_0} \alpha_{k_0}\) since, by Lemma~\ref{lem:h-K}, \(\jump{h_{\cK}}{a_{k_0}}=1\).
\end{proof}

Using \(h_{\cK}\in \fA_0\) we define the operator \(\cK : \fA_{0} \to \fA_{0}\),
\begin{equation}%
    \label{eq:def-K}
    \cK h = \alpha_{k_0}^{-1}\jump{h}{a_{k_0-1}} \, h_{\cK}.
\end{equation}
Observe that, for all \(h\in \fA_{0}\), \(  \snorm{\cK h} \leq 2 \abs{\smash{\alpha_{k_0}}}^{-1}  \norm{h}_{L^{\infty}} \snorm{\smash{h_{\cK}}} \le  C \snorm{h}\).
Consequently \(\cK\) admits a continuous extension to \(\fA\) which, abusing notation we also denote \(\cK\).
Observe that this is a rank-one operator.
Now let
\begin{equation}
    \label{eq:def-M}
    \cM :  \fA \to \fA; \quad
    \cM  = {\left.\cL\right|}_{\fA} - \cK.
\end{equation}
Since \(\cM\) and \(\cL\) differ by a compact operator they will have the same essential spectral radius.
We now show that the linear functionals \(\ell_{\lambda}\) are eigenvectors for the dual operator \(\cM^{*}\).

\begin{lemma}%
    \label{lem:eigenvecs}
    If \(\abs{\lambda} < {\Lambda^{\inf}}\) then \(\cM^{*} \ell_{\lambda} = \lambda \ell_{\lambda}\) as elements of \(\fA^{*}\).
\end{lemma}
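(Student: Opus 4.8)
The plan is to verify the eigenvector identity by testing $\cM^{*}\ell_{\lambda}$ against an arbitrary $h \in \fA_{0}$, since $\fA_{0}$ is dense in $\fA$ and both sides are continuous functionals. By definition $\langle \cM^{*}\ell_{\lambda}, h\rangle = \ell_{\lambda}(\cM h) = \ell_{\lambda}(\cL h) - \ell_{\lambda}(\cK h)$, so the goal is to show this equals $\lambda\,\ell_{\lambda}(h) = \sum_{k \ge k_0} \lambda^{k+1}\alpha_k\, \jump{h}{a_k}$. I would compute the two pieces separately.

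For the first piece, expand $\ell_{\lambda}(\cL h) = \sum_{k \ge k_0} \lambda^{k}\alpha_k\, \jump{\cL h}{a_k}$ and apply Lemma~\ref{lem:calc-jump}, which holds for every $k \ge k_0$, to get $\jump{\cL h}{a_k} = \gamma_{k-1}\varphi(a_{k-1})\jump{h}{a_{k-1}}$. Reindexing the sum by $j = k-1$ gives $\sum_{j \ge k_0 - 1} \lambda^{j+1}\alpha_{j+1}\gamma_{j}\varphi(a_{j})\jump{h}{a_{j}}$. Now invoke the recursion~\eqref{eq:def-alpha}: for $j \ge k_0$ one has $\alpha_{j+1}\gamma_{j}\varphi(a_{j}) = \alpha_{j}$ (rearranging $\alpha_{j+1} = \frac{\gamma_j}{\varphi(a_j)}\alpha_j$, using $\varphi(a_j) \ne 0$ and $\gamma_j^2 = 1$), so those terms collapse to $\sum_{j \ge k_0}\lambda^{j+1}\alpha_j\jump{h}{a_j} = \lambda\,\ell_{\lambda}(h)$. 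The one leftover term is $j = k_0 - 1$, namely $\lambda^{k_0}\alpha_{k_0}\gamma_{k_0-1}\varphi(a_{k_0-1})\jump{h}{a_{k_0-1}}$.

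It therefore remains to check that this leftover term is exactly $\ell_{\lambda}(\cK h)$. By~\eqref{eq:def-K}, $\cK h = \alpha_{k_0}^{-1}\jump{h}{a_{k_0-1}}\,h_{\cK}$, so $\ell_{\lambda}(\cK h) = \alpha_{k_0}^{-1}\jump{h}{a_{k_0-1}}\,\ell_{\lambda}(h_{\cK})$, and Lemma~\ref{lem:calc-l-hK} gives $\ell_{\lambda}(h_{\cK}) = \lambda^{k_0}\alpha_{k_0}$, hence $\ell_{\lambda}(\cK h) = \lambda^{k_0}\jump{h}{a_{k_0-1}}$. Matching this against the leftover term requires $\alpha_{k_0}\gamma_{k_0-1}\varphi(a_{k_0-1}) = 1$, which is precisely the recursion~\eqref{eq:def-alpha} at $k = k_0$ together with the normalisation $\alpha_{k_0-1} = 1$ (again using $\gamma_{k_0-1}^2 = 1$ and $\varphi(a_{k_0-1}) \ne 0$). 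Thus $\ell_{\lambda}(\cL h) - \ell_{\lambda}(\cK h) = \lambda\,\ell_{\lambda}(h)$ for all $h \in \fA_{0}$, and by density and continuity (Lemma~\ref{lem:good-funcs}) the identity $\cM^{*}\ell_{\lambda} = \lambda\ell_{\lambda}$ holds in $\fA^{*}$.

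The only genuinely delicate point is the bookkeeping of the boundary term $j = k_0 - 1$ produced by the reindexing: the operator $\cK$ was evidently designed precisely to absorb it, so the main task is to confirm that the constants line up, which reduces entirely to unwinding the definition~\eqref{eq:def-alpha} of $\alpha_k$ at the initial index. One should also note that interchanging $\ell_{\lambda}$ with the (finite or convergent) sum defining $\cL h$ is legitimate because $\jump{\cL h}{a_k}$ is nonzero for only the relevant indices and the series $\sum \abs{\lambda^k\alpha_k}$ converges absolutely for $\abs{\lambda} < \Lambda^{\inf}$, as established in the proof of Lemma~\ref{lem:good-funcs}.
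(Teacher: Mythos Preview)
Your proof is correct and follows essentially the same approach as the paper: compute $\ell_{\lambda}(\cL h)$ by applying Lemma~\ref{lem:calc-jump} term-by-term and using the recursion~\eqref{eq:def-alpha} to shift the index, then check that the boundary term at $k_0-1$ is cancelled by $\ell_{\lambda}(\cK h)$ via Lemma~\ref{lem:calc-l-hK}. The only cosmetic difference is that the paper applies the recursion uniformly for all $k\ge k_0$ (equivalently all $j\ge k_0-1$) in one stroke and then splits off the $j=k_0-1$ term, whereas you treat $j\ge k_0$ and $j=k_0-1$ separately before recombining; the underlying computation is identical.
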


\begin{proof}
    Let \(h\in \fA_{0}\).
    We will calculate separately the two terms of \( \ell_{\lambda}(\cM h)   =  \ell_{\lambda}(\cL h)     - \ell_{\lambda}(\cK h)\).
    For the \(\cL\) term,
    \begin{equation*}
        \ell_{\lambda}(\cL h)
        =
        \sum_{k=k_0}^{\infty}  \lambda^{k} \alpha_k \, \jump{\cL h}{a_k}.
    \end{equation*}
    By definition~\eqref{eq:def-alpha}, \(\alpha_k \gamma_{k-1}\varphi(a_{k-1}) = \alpha_{k-1}\), 
    by Lemma~\ref{lem:calc-jump},
    \(\jump{\cL g}{a_{k}} =  \gamma_{k-1} \ \varphi(a_{k-1}) \ \jump{g}{a_{k-1}}\)
    and so \(\lambda^k \alpha_k \, \jump{\cL h}{a_k} = \lambda \left[\lambda^{k-1}\alpha_{k-1} \, \jump{h}{a_{k-1}}\right] \) for each \(k\geq k_0\).
    Consequently,
    \begin{equation}
        \label{eq:calc-L}
        \begin{aligned}
            \ell_{\lambda}(\cL h)
             & = \lambda \sum_{k=k_0-1}^{\infty}  \lambda^{k} \alpha_k \, \jump{ h}{a_k} \\
             & = \lambda^{k_0} \jump{h}{a_{k_0-1}}
            + \lambda \ell_{\lambda}(h).
        \end{aligned}
    \end{equation}
    For the \(\cK\) term, recall that \(\cK h = \alpha_{k_0}^{-1} \jump{h}{a_{k_0-1}} \, h_{\cK}\) and so
    \( \ell_{\lambda}(\cK h) = \alpha_{k_0}^{-1} \jump{h}{a_{k_0-1}} \, \ell_{\lambda}(h_{\cK})\).
    Lemma~\ref{lem:calc-l-hK} tells us that \(\ell_{\lambda}(h_{\cK}) = \lambda^{k_0} \alpha_{k_0}\) and so
    \begin{equation}
        \label{eq:calc-K}
        \ell_{\lambda}(\cK h)
        = \lambda^{k_0} \jump{h}{a_{k_0-1}}.
    \end{equation}
    Summing the terms from the above calculations, \eqref{eq:calc-L} and \eqref{eq:calc-K}, we have shown that \( \ell_{\lambda}(\cM h) = \ell_{\lambda}(\cL h) - \ell_{\lambda}(\cK h) =  \lambda \ell_{\lambda}(h)\) for any \(h \in \fA_{0}\).
    By continuity this holds also on \(\fA\).
\end{proof}

We denote the spectrum and the essential spectrum of an operator by  \(\Spec{\cdot}\) and \(\EssSpec{\cdot}\) respectively, and  their respective ``filled'' versions are denoted  by \(\FullEssSpec{\cdot}\) and \(\FullSpec{\cdot}\) (see Section~\ref{sec:ess-spec} for the precise definitions).
To complete the proof it is convenient to recall the different spaces and operators we worked with.
The Banach space \((\fA,\snorm{\cdot})\) is an \(\cL\)-invariant subspace of the Banach space \((\fB,\snorm{\cdot})\).
We worked with the operators
\[
    \cL : \fB \to \fB, \quad
    \cM : \fA \to \fA, \quad
    \cK : \fA \to \fA,
\]
and we know that, by definition of \(\cM\),
\[
    \left.\cL\right|_{\fA} = \cM + \cK.
\]
Lemma~\ref{lem:calc-l-hK} tells us that \(\ell_\lambda \in \fA^*\) is non-zero and so
Lemma~\ref{lem:eigenvecs} implies that, whenever \(\abs{\lambda}<\Lambda^{\inf}\), \(\lambda\) is an eigenvalue of \(\cM^{*}: \fA^{*} \to \fA^{*}\).
Moreover the spectrum is a closed set and so%
\[
    \Spec{\cM} = \Spec{\cM^{*}} \supseteq \cball{\Lambda^{\inf}}.
\]
Lemma~\ref{lem:U-far-out} implies that \(\FullEssSpec{\cM} \supseteq \cball{\Lambda^{\inf}}\).
Since \(\cK\) is compact,
\[
    \EssSpec{\cM} = \EssSpec{\left.\cL\right|_{\fA}}
\]
and so \(\FullEssSpec{\left.\cL\right|_{\fA}} \supseteq \cball{\Lambda^{\inf}}\) 
and, in particular, \(\FullSpec{\left.\cL\right|_{\fA}} \supseteq \cball{\Lambda^{\inf}}\).
To finish the argument we need to compare the spectrum of the restriction \(\left.\cL\right|_{\fA}\) with the spectrum of \(\cL\).
By Lemma~\ref{lem:restriction}, \(\Spec{\left.\cL\right|_{\fA}} \subseteq \FullSpec{\cL}\), and so we know that \(\FullSpec{\cL} \supseteq \cball{\Lambda^{\inf}}\).
This implies (Lemma~\ref{lem:U-far-out}), that
\[
    \FullEssSpec{\cL} \supseteq \cball{\Lambda^{\inf}}
\]
and, in particular, the essential spectral radius of \(\cL\) is not smaller than \(\Lambda^{\inf}\).
This concludes the proof of Theorem~\ref{thm:lower}.

\section{Background on essential spectrum}
\label{sec:ess-spec}

In this section we isolate some relatively standard functional analytic details which are useful for the present work.
We say that \(z\) lies in the essential spectrum \(\EssSpec{\cL}\) of a bounded operator \(\cL\) if \((z\id - \cL)\) is not a Fredholm operator.
There are five different definitions of essential spectrum in common usage (see, e.g., \cite[I.4]{EE87} where the one we use corresponds to the third).
The different definitions in general aren't equivalent however the essential spectral radius is equal for all five definitions~\cite[Corollary 4.11]{EE87}.
The notion of essential spectrum we use here is invariant under compact perturbations (this is true for four out or the five definitions)~\cite[Theorem 4.1]{EE87}.

Let \(\FullEssSpec{\cL}\) denote the complement of the unbounded component of \(\bC \setminus \EssSpec{\cL}\).
I.e., this is the union of \(\EssSpec{\cL}\) and all the bounded components of the complement.
Similarly \(\FullSpec{\cL}\) denotes the complement of the unbounded component of \(\bC \setminus \Spec{\cL}\).
These are the ``filled'' versions of the usual notions.

In our work we arrive at the point where we know that the spectrum contains a complete disc.
According to one of the definitions of the essential spectrum this would imply that the entire disc is essential spectrum.
However this version of the essential spectrum might not be invariant under compact perturbation and this is a property which we require in the present argument.
We know that~\cite[Theorem 4.3.18]{Davies07}:
\begin{quoting}
    If \(\cL\) is a bounded operator then \(\Spec{\cL} \setminus \FullEssSpec{\cL}\) consists of isolated eigenvalues with finite algebraic and geometric multiplicities.
\end{quoting}
This allows us to conclude the following observation.

\begin{lemma}%
    \label{lem:U-far-out}
    Let \(\cL\) be a bounded operator
    and let \(C>0\).
    If \(\FullSpec{\cL} \supseteq \cball{C}\)
    then \(\FullEssSpec{\cL} \supseteq \cball{C}\).
\end{lemma}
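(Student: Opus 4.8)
The plan is to deduce the lemma directly from the quoted result of Davies. Suppose, for contradiction, that the conclusion fails, i.e.\ that $\FullSpec{\cL} \supseteq \cball{C}$ but there exists some $z_{0}$ with $\abs{z_{0}} \le C$ and $z_{0} \notin \FullEssSpec{\cL}$. Since $\FullEssSpec{\cL}$ is closed, a whole neighbourhood of $z_{0}$ lies outside $\FullEssSpec{\cL}$, hence in particular there are uncountably many such points (e.g.\ an entire open disc around $z_{0}$). By the hypothesis $\FullSpec{\cL} \supseteq \cball{C}$, every one of these points lies in $\Spec{\cL}$, and therefore belongs to $\Spec{\cL} \setminus \FullEssSpec{\cL}$.

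Now I would invoke the quoted theorem of Davies: $\Spec{\cL} \setminus \FullEssSpec{\cL}$ consists of \emph{isolated} eigenvalues. But an isolated set of points is at most countable, whereas we have just exhibited uncountably many points (an open disc's worth) contained in it. This contradiction establishes the lemma.

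There is one small gap to fill: the statement I want to contradict concerns $\FullSpec{\cL}$, not $\Spec{\cL}$, and the Davies theorem is about $\Spec{\cL} \setminus \FullEssSpec{\cL}$. So I should first observe that $\FullSpec{\cL} \setminus \FullEssSpec{\cL} = \Spec{\cL} \setminus \FullEssSpec{\cL}$. Indeed, $\FullSpec{\cL}$ is obtained from $\Spec{\cL}$ by adjoining bounded components of the complement of $\Spec{\cL}$; I claim any such added point already lies in $\FullEssSpec{\cL}$. A bounded component $U$ of $\bC \setminus \Spec{\cL}$ satisfies $U \subseteq \bC \setminus \EssSpec{\cL}$ (since $\EssSpec{\cL} \subseteq \Spec{\cL}$), and $U$, being bounded, cannot be contained in the unbounded component of $\bC \setminus \EssSpec{\cL}$; hence $U \subseteq \FullEssSpec{\cL}$. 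Therefore the points of $\FullSpec{\cL} \setminus \Spec{\cL}$ contribute nothing to $\FullSpec{\cL} \setminus \FullEssSpec{\cL}$, which gives the claimed equality and lets the argument above go through verbatim with $\FullSpec{\cL}$ in place of $\Spec{\cL}$.

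I do not expect any serious obstacle here; the only point requiring a little care is the bookkeeping between the filled and unfilled versions of the spectrum and essential spectrum in the previous paragraph, ensuring that passing to filled versions does not introduce spurious points outside $\FullEssSpec{\cL}$. Once that identification is in hand, the cardinality argument (uncountably many points forced into a countable set of isolated eigenvalues) closes the proof immediately.
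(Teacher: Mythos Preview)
Your overall strategy---a cardinality argument showing that an open disc cannot fit inside the countable set $\Spec{\cL}\setminus\FullEssSpec{\cL}$---is sound and gives a pleasant alternative to the paper's boundary argument. However, there is a genuine gap in your third paragraph. You assert that a bounded component $U$ of $\bC\setminus\Spec{\cL}$, ``being bounded, cannot be contained in the unbounded component of $\bC\setminus\EssSpec{\cL}$.'' This is false as a purely topological statement: a bounded set may certainly sit inside an unbounded one. Nothing you have written so far rules out, say, $\EssSpec{\cL}=\emptyset$, in which case the unbounded component of $\bC\setminus\EssSpec{\cL}$ is all of $\bC$ and trivially contains $U$. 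The reason this cannot happen for an actual bounded operator is precisely Davies' theorem, which you have not yet invoked at that point of the argument---so the step is circular as it stands.

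The repair is to feed Davies' theorem into this very step. The set $\Spec{\cL}\setminus\FullEssSpec{\cL}$ is discrete, hence closed and countable in the open connected set $V:=\bC\setminus\FullEssSpec{\cL}$; removing a closed discrete set from a connected open planar set leaves it connected, so $V\setminus\Spec{\cL}$ is connected. It contains the unbounded component of $\bC\setminus\Spec{\cL}$ (because $\FullEssSpec{\cL}\subset\FullSpec{\cL}$), and therefore cannot also contain a \emph{distinct} bounded component $U$, since a connected subset of $\bC\setminus\Spec{\cL}$ lies in a single component. This yields $U\subset\FullEssSpec{\cL}$ and your identity $\FullSpec{\cL}\setminus\FullEssSpec{\cL}=\Spec{\cL}\setminus\FullEssSpec{\cL}$ follows; after that your uncountable-versus-countable contradiction closes the proof. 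The paper, by contrast, argues directly that every boundary point of the connected component of $\FullSpec{\cL}$ containing $\cball{C}$ must lie in $\FullEssSpec{\cL}$, using Davies at the same crucial moment; both routes are short once the role of Davies is correctly located.
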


\begin{proof}
    Since \(\FullSpec{\cL} \supseteq \cball{C}\) there must exist a connected set \(S \subset \bC\) such that  \(S \supseteq \cball{C}\) and\footnote{We denote by \(\partial S\) the topological boundary of \(S\subset \bC\).} \(\partial S \subset \Spec{\cL}\).
    Let \(\lambda\in \partial S\) and, for sake of contradiction, suppose that \(\lambda \notin \FullEssSpec{\cL}\).
    This means~\cite[Theorem 4.3.18]{Davies07} that \(\lambda\) is an isolated eigenvalue which is in contradiction to the fact that \(\lambda\) is in the boundary of \(S\).
    Consequently \(\partial S \subset \FullEssSpec{\cL}\) and so \(S \subseteq \FullEssSpec{\cL}\).
\end{proof}

In general we couldn't hope for better than the above inclusion (the result requires \(\FullEssSpec{\cL}\) rather than \(\EssSpec{\cL}\)) because there exist examples\footnote{%
    Consider \(\cL:l^{2}(\mathbb {Z} )\to l^{2}(\mathbb {Z} ) \) defined as \((a_1,a_2,a_3,\ldots) \mapsto (0, a_1,a_2,\ldots) \).
    If \(\abs{z}=1\) then the range of \((z\id - \cL)\) is dense but not closed and so this set is included in every definition of essential spectrum.
    If \(\abs{z}<1\) then \((z\id - \cL)\) has a closed range, one-dimensional kernel, and one-dimensional cokernel and so this set is in \(\Spec{\cL}\) but not in \(\EssSpec{\cL}\).}
where the entire unit disc is in the spectrum but that only the unit circle is in the part of the spectrum which is invariant under compact perturbation.

We also need to be able to connect the the essential spectrum of the restriction of an operator to the spectrum of the operator itself.
It would be convenient if the spectrum of the restriction was always smaller but this isn't true in general.
Nonetheless this type of relationship holds for the part of the spectrum which we are interested in.
The following lemma tells us that the spectrum of \(\left.\cL\right|_{\fA}\) can be richer than the spectrum of \(\cL\) only if it ``fills'' holes present in the spectrum of \(\cL\).

\begin{lemma}%
    \label{lem:restriction}
    Let \(\cL\) be a bounded operator on Banach space \((\fB,\snorm{\cdot})\).
    Suppose that \(\fA \subseteq \fB\) is a closed \(\cL\)-invariant subspace.
    Then
    \begin{equation}
        \label{eq:spec-res}
        \Spec{\left.\cL\right|_{\fA}} \subseteq \FullSpec{\cL}.
    \end{equation}
\end{lemma}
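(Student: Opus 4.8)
The plan is to prove the inclusion $\Spec{\left.\cL\right|_{\fA}} \subseteq \FullSpec{\cL}$ by showing the contrapositive-flavoured statement: if $\lambda$ lies in the unbounded component $U$ of $\bC \setminus \Spec{\cL}$, then $\lambda \notin \Spec{\left.\cL\right|_{\fA}}$, i.e.\ $(\lambda\id - \cL)$ restricts to a boundedly invertible operator on $\fA$. Since $U$ is by definition the complement of $\FullSpec{\cL}$, this is exactly what we need. The key observation is that for $\lambda \in U$ the resolvent $R_\lambda = (\lambda\id - \cL)^{-1}$ exists as a bounded operator on $\fB$, and the task reduces to showing $R_\lambda$ preserves the closed invariant subspace $\fA$; once that is known, $\left.R_\lambda\right|_{\fA}$ is a bounded two-sided inverse of $\left.(\lambda\id-\cL)\right|_{\fA}$ on $\fA$, so $\lambda$ is in the resolvent set of the restriction.

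First I would treat the easy region: for $\abs{\lambda} > \norm{\cL}_{\fB}$ (which lies in $U$), the Neumann series $R_\lambda = \sum_{n=0}^{\infty} \lambda^{-n-1} \cL^n$ converges in operator norm on $\fB$. Since $\fA$ is $\cL$-invariant and norm-closed, each partial sum maps $\fA$ into $\fA$, hence so does the limit $R_\lambda$; thus $R_\lambda(\fA) \subseteq \fA$ for all such $\lambda$. Next I would propagate this property throughout $U$ using analyticity and connectedness. The map $\lambda \mapsto R_\lambda$ is holomorphic on $U$ as a $\fB$-valued (operator-valued) function. Fix any $h \in \fA$ and any bounded linear functional $\psi \in \fB^*$ that annihilates $\fA$ (i.e.\ $\psi \in \fA^\perp$); then $\lambda \mapsto \psi(R_\lambda h)$ is a scalar holomorphic function on the connected open set $U$ which vanishes on the nonempty open subset $\{\abs{\lambda} > \norm{\cL}_{\fB}\}$, hence vanishes identically on $U$ by the identity theorem. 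Since $\fA$ is closed, the Hahn--Banach theorem gives $\fA = \bigcap_{\psi \in \fA^\perp} \ker \psi$, so $R_\lambda h \in \fA$ for every $\lambda \in U$ and every $h \in \fA$. This establishes $R_\lambda(\fA) \subseteq \fA$ on all of $U$.

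Finally I would assemble the conclusion: for $\lambda \in U$, the operator $S_\lambda := \left.R_\lambda\right|_{\fA} : \fA \to \fA$ is bounded, and from $R_\lambda(\lambda\id - \cL) = (\lambda\id - \cL)R_\lambda = \id$ on $\fB$ we obtain $S_\lambda \left.(\lambda\id-\cL)\right|_{\fA} = \left.(\lambda\id-\cL)\right|_{\fA} S_\lambda = \id_{\fA}$ by restriction. Hence $\left.(\lambda\id - \cL)\right|_{\fA} = \lambda\id - \left.\cL\right|_{\fA}$ is invertible, so $\lambda \notin \Spec{\left.\cL\right|_{\fA}}$. Therefore $\Spec{\left.\cL\right|_{\fA}} \subseteq \bC \setminus U = \FullSpec{\cL}$, as claimed.

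I expect the main obstacle — or rather the one point requiring care — to be the analytic-continuation step, namely correctly justifying that invariance of $\fA$ under $R_\lambda$ spreads from the large-$\abs{\lambda}$ regime to the whole unbounded component $U$. This hinges on $U$ being connected (true by definition as a single connected component), on operator-valued holomorphy of the resolvent, and on the Hahn--Banach characterisation of the closed subspace $\fA$ via its annihilator; none of these is deep, but the argument does genuinely use closedness of $\fA$, which is why the hypothesis is stated. Everything else is routine Neumann-series and resolvent-identity bookkeeping.
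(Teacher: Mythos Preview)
Your proof is correct, but it takes a genuinely different route from the paper's. The paper argues via the approximate point spectrum: it uses the standard fact that $\partial\,\Spec{\left.\cL\right|_{\fA}}$ is contained in the approximate point spectrum of $\left.\cL\right|_{\fA}$, observes that approximate eigenvectors for the restriction are automatically approximate eigenvectors for $\cL$ on $\fB$, hence $\partial\,\Spec{\left.\cL\right|_{\fA}} \subseteq \Spec{\cL}$, and then uses a short topological argument to conclude $\Spec{\left.\cL\right|_{\fA}} \subseteq \FullSpec{\cL}$. Your argument instead works entirely on the resolvent side: you show directly that $R_\lambda$ preserves $\fA$ on the unbounded component of the resolvent set by combining the Neumann series for large $\abs{\lambda}$ with analytic continuation and Hahn--Banach. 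The paper's approach is a little shorter and avoids complex analysis, relying only on the approximate-point-spectrum characterisation of the boundary; your approach is more constructive in that it actually identifies $(\lambda\id - \left.\cL\right|_{\fA})^{-1}$ as the restriction of $R_\lambda$, and it makes transparent exactly where the closedness of $\fA$ is used. Both proofs are standard and of comparable difficulty.
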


\begin{proof}
    We know\footnote{Let \(\lambda \in \partial \Spec{\cL}\). Choose sequence \(\lambda_k \to \lambda\) such that \(\lambda_k \notin \Spec{\cL}\) for all \(k\in \bN\). Since \(\norm{(\lambda_k - \cL)^{-1}}\to \infty\) there exists \({\{h_k\}}_{k}\) such that \(\norm{h_k}\to 0\) but \(\norm{(\lambda_k - \cL)^{-1}h_k}= 1\) for each \(k\). Let \(g_k = (\lambda_k - \cL)^{-1}h_k\), these are the approximate eigenfunctions.} that  \(\partial \Spec{\left.\cL\right|_{\fA}}\) is contained in the approximate point spectrum of \({\left.\cL\right|_{\fA}}\).
    The approximate point spectrum of \(\left.\cL\right|_{\fA}\) is contained in the approximate point spectrum of \(\cL\), in particular in \(\Spec{\cL}\).
    In summary, we have \(\partial \Spec{\left.\cL\right|_{\fA}} \subseteq \Spec{\cL}\).
    Consequently \(\Spec{\left.\cL\right|_{\fA}} \subseteq \FullSpec{\cL}\).
\end{proof}

\section{Examples}
\label{sec:example}

This section is devoted to proving Theorem~\ref{thm:example} by explicitly constructing a family of examples.

For any \(m \in \bN\), \(m \ge 4\) and \(\rho \in (1,m)\) we define the piecewise monotone (indeed affine) transformation \(\sT_{m,\rho}: [0,1] \to [0,1]\),
\[
    \sT_{m, \rho}(x) =
    \begin{cases}
        \frac{m}{m-3}x,        & \mbox{if }x\in [0,\frac{m-3}{m})= \omega_{1}             \\
        m(x-\frac{m-3}{m}),    & \mbox{if }x\in[\frac{m-3}{m},\frac{m-2}{m})= \omega_{2}  \\
        m(x-\frac{m-2}{m}),    & \mbox{if }x\in[\frac{m-2}{m},\frac{m-1}{m}) = \omega_{3} \\
        \rho(x-\frac{m-1}{m}), & \mbox{if }x\in[\frac{m-1}{m},1] = \omega_{4}.            \\
    \end{cases}
\]
Observe that \(\sT_{m, \rho}\), restricted to \(\omega_1\), \(\omega_2\) or \(\omega_3\) is bijective onto \([0,1)\),
while \(T_{m,\rho}(\omega_{4}) = [0, \frac{\rho}{m}]\).

\begin{lemma}
    \label{lem:ourRadius}
    For any \(m \ge 4\), there exists \(\rho \in (0,m)\) such that \(1\) is a non-trivial discontinuity for the transformation \(\sT_{m,\rho}\)
    and\footnote{Recall that we write \(\Lambda\) when \(\Lambda^{\inf} = \Lambda^{\sup}\), i.e. when the limit exists.}
    \[
        \Lambda(\sT_{m,\rho}, \tfrac{1}{\sT_{m,\rho}^\prime}) = \frac{1}{m}.
    \]
\end{lemma}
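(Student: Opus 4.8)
The plan is to route the forward orbit of the discontinuity $1$ into a set on which $\sT_{m,\rho}$ has derivative identically $m$; such an invariant Cantor set sits inside $\omega_2\cup\omega_3$, the union of the two branches of slope $m$, and — crucially — depends only on $m$, since $\rho$ enters the definition of $\sT_{m,\rho}$ only through $\omega_4$, so it is legitimate to fix $\rho$ afterwards from a point of this set. Concretely, on $V:=[\tfrac{m-3}{m},\tfrac{m-1}{m}]$ consider the inverse branches $g_2(y)=\tfrac{m-3}{m}+\tfrac{y}{m}$ of $\sT_{m,\rho}|_{\omega_2}$ and $g_3(y)=\tfrac{m-2}{m}+\tfrac{y}{m}$ of $\sT_{m,\rho}|_{\omega_3}$, both affine contractions of ratio $1/m$. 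A handful of linear inequalities, valid for all $m\ge4$, show that $g_2(V)$ and $g_3(V)$ are disjoint, that $g_2(V)\subset\operatorname{int}\omega_2$ and $g_3(V)\subset\operatorname{int}\omega_3$ (in particular $g_2(V)\cup g_3(V)\subset V$), and that $\min\!\left(g_2(V)\cup g_3(V)\right)>\tfrac{m-3}{m}\ge\tfrac1m$ while $\max\!\left(g_2(V)\cup g_3(V)\right)<\tfrac{m-1}{m}$. Standard iterated–function–system theory then produces a unique nonempty compact $\cK\subset V$ with $\cK=g_2(\cK)\sqcup g_3(\cK)$; since $\sT_{m,\rho}|_{\omega_i}$ inverts $g_i$ on $V$ for $i\in\{2,3\}$, one gets $\sT_{m,\rho}(\cK)=\cK$, i.e.\ $\cK$ is forward invariant, and $\cK\subset\operatorname{int}\omega_2\cup\operatorname{int}\omega_3\subset(\tfrac1m,1)$. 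Because $\sT_{m,\rho}'\equiv m$ on $\operatorname{int}\omega_2\cup\operatorname{int}\omega_3$, we have $\tfrac{1}{\sT_{m,\rho}'}\equiv\tfrac1m$ on $\cK$.

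Next I would choose $p$ and $\rho$. Pick a sequence $(\omega_j)_{j\ge1}\in\{2,3\}^{\bN}$ which is not eventually periodic and let $p$ be the single point of $\bigcap_{n\ge1}(g_{\omega_1}\circ\cdots\circ g_{\omega_n})(V)$; then $p\in\cK$, and for every $n$ the point $\sT_{m,\rho}^{\,j}p$ lies in the level-$n$ cylinder coded by $(\omega_{j+1},\ldots,\omega_{j+n})$, so by disjointness of the level-$n$ cylinders the points $\{\sT_{m,\rho}^{\,k}p\}_{k\ge0}$ are pairwise distinct and the forward orbit of $p$ is infinite. Now set $\rho:=m\,p$. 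Since $p\in\cK\subset(\tfrac{m-3}{m},\tfrac{m-1}{m})$ and $m\ge4$, we get $\rho\in(m-3,m-1)\subseteq(1,m)$, so $\sT_{m,\rho}$ is exactly the transformation of the statement, and from its fourth branch $\sT_{m,\rho}(1)=\rho\cdot\tfrac1m=p$.

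It then remains to conclude. The orbit of $1\in\partial\omega_4$ is $\{\sT_{m,\rho}^{\,k}1\}_{k\ge1}=\{p,\sT_{m,\rho}p,\sT_{m,\rho}^2p,\ldots\}\subset\cK$, an infinite set, so $1$ is a non-trivial discontinuity; and as $\sT_{m,\rho}^{\,k}1\in\cK$ for all $k\ge1$ we have $\tfrac{1}{\sT_{m,\rho}'}(\sT_{m,\rho}^{\,k}1)=\tfrac1m$, whence $\abs{\prod_{k=1}^{n}\tfrac{1}{\sT_{m,\rho}'}(\sT_{m,\rho}^{\,k}1)}^{1/n}=\tfrac1m$ for every $n$, so $\Lambda^{\inf}(\sT_{m,\rho},\tfrac{1}{\sT_{m,\rho}'},1)=\Lambda^{\sup}(\sT_{m,\rho},\tfrac{1}{\sT_{m,\rho}'},1)=\tfrac1m$. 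To obtain $\Lambda(\sT_{m,\rho},\tfrac{1}{\sT_{m,\rho}'})=\tfrac1m$, i.e.\ the maximum over all non-trivial discontinuities, observe that $\Omega=\{\omega_1,\omega_2,\omega_3,\omega_4\}$ is the maximal partition of continuity, with breakpoints $0,\tfrac{m-3}{m},\tfrac{m-2}{m},\tfrac{m-1}{m},1$: the image of each breakpoint under the branch to its right is the fixed point $0$, so those discontinuities are trivial; the image of $\tfrac{m-3}{m},\tfrac{m-2}{m},\tfrac{m-1}{m}$ under the branch to its left is in each case $1$, whose orbit $\{1,p,\sT_{m,\rho}p,\ldots\}$ differs from that of $1$ only by the single bounded factor $\tfrac{1}{\sT_{m,\rho}'}(1)=\tfrac1\rho$, so again $\abs{\prod_{k=1}^{n}\tfrac{1}{\sT_{m,\rho}'}(\sT_{m,\rho}^{\,k}a)}^{1/n}\to\tfrac1m$. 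Hence every non-trivial discontinuity contributes $\tfrac1m$, and the claim follows.

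The main obstacle — indeed essentially the only real work — is verifying the short list of linear inequalities of the first step uniformly in $m\ge4$ (disjointness and the interior containments $g_i(V)\subset\operatorname{int}\omega_i$, together with $\min\cK>1/m$) and making the structural observation that $\cK$ and the orbit of $p$ are independent of $\rho$, so that defining $\rho:=m\,p$ is not circular. Once these are in place, the invariance of $\cK$, the constancy of the weight on it, and the evaluation of $\Lambda$ are all immediate.
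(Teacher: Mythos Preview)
Your argument is correct and follows the same route as the paper: locate an aperiodic point $p$ in the $\rho$-independent invariant Cantor set inside $\omega_2\cup\omega_3$ (where $T'\equiv m$), then set $\rho=mp$ so that $\sT_{m,\rho}(1)=p$, and read off $\Lambda=1/m$ from the constant weight along the orbit. You are if anything more careful than the paper, which merely asserts the Cantor set exists and states that $1$ is the only non-trivial discontinuity; your observation that the left-sided images of $\tfrac{m-3}{m},\tfrac{m-2}{m},\tfrac{m-1}{m}$ all equal $1$ (hence are also non-trivial but contribute the same $\Lambda=1/m$) is a genuine refinement of that last step.
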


\begin{proof}
    Notice that \(T_{m, \rho}\) restricted to \(\omega_{2}\cup \omega_{3}\) is independent of \(\rho\).
    Furthermore, \(T_{m, \rho}\) \(\omega_{2} \supset \omega_{1}\cup \omega_{2}\) and
    \(T_{m, \rho}\) \(\omega_{1} \supset \omega_{1}\cup \omega_{2}\).
    Therefore, there exists a point \(b \in \omega_{2} \cup \omega_{3}\) (actually a full Cantor set of points),
    which remains forever in \(\omega_{2}\cup \omega_{3}\) and enjoys a non-periodic trajectory.
    Let us fix \(\rho\) in such a way that
    \[
        \sT_{m,\rho}(1) = \frac{\rho}{m} = b .
    \]
    By the non-periodicity of the orbit of \(b\), we have that \(1\) is a non-trivial discontinuity for the map \(T_{m,\rho}\).
    Furthermore, since such an orbit will visit only the sets \(\omega_{2}\) and \(\omega_{3}\), we have that
    \[
        \Lambda(\sT_{m,\rho}, \tfrac{1}{\sT_{m,\rho}^\prime}, 1)
        = \lim_{n\to \infty} \biggl ( \frac{1}{\rho}\cdot (1/m)^{n-1}\biggr )^{1/n}
        = \frac{1}{m}.
    \]
    To finish, we observe that \(1\) is the only non-trivial discontinuity so that
    \(\Lambda(\sT_{m,\rho}, \tfrac{1}{\sT_{m,\rho}^\prime}) = \Lambda(\sT_{m,\rho}, \tfrac{1}{\sT_{m,\rho}^\prime}, 1) \).
\end{proof}

On the other hand, the next lemma gives an estimate of the quantity which corresponds to the essential spectral radius on BV.
\begin{lemma}
    \label{lem:KellRadius}
    For all \(m \ge 4\) and \(\rho \in (0,m)\),
    \[
        \lim_{n\to \infty}\norm{ {\tfrac{1}{(\sT_{m,\rho}^n)'}} }_{L^\infty}^{1/n}
        \ge \frac{m-3}{m}.
    \]
\end{lemma}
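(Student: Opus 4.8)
The plan is to exhibit an explicit orbit segment along which the derivative of $\sT_{m,\rho}^n$ stays as small as possible, i.e.\ where each factor $\sT_{m,\rho}'$ equals the minimal slope $\tfrac{m}{m-3}$ attained on $\omega_1$. Since $\norm{\tfrac{1}{(\sT_{m,\rho}^n)'}}_{L^\infty} = \sup_x \prod_{k=0}^{n-1} \tfrac{1}{\abs{\sT_{m,\rho}'(\sT_{m,\rho}^k x)}}$, it suffices to find, for each $n$, a point whose first $n$ iterates all land in $\omega_1$, because then the product is exactly $\bigl(\tfrac{m-3}{m}\bigr)^n$ and the $n$-th root is exactly $\tfrac{m-3}{m}$, giving the claimed bound (in fact with equality in the limit, but only ``$\ge$'' is asserted).

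The key step is therefore a combinatorial/dynamical observation about $\omega_1 = [0,\tfrac{m-3}{m})$: on $\omega_1$ the map is $x \mapsto \tfrac{m}{m-3}x$, which is a linear expansion of $\omega_1$ onto $[0,1)$, hence in particular onto a set containing $\omega_1$ itself. So $\sT_{m,\rho}(\omega_1 \cap \sT_{m,\rho}^{-1}\omega_1) = \omega_1$ after rescaling, and by induction the set $A_n := \bigcap_{k=0}^{n-1} \sT_{m,\rho}^{-k}\omega_1$ is a nonempty (indeed nondegenerate) subinterval of $\omega_1$ for every $n$: at each stage we are pulling back the nonempty open set $A_{n-1} \subseteq \omega_1 \subseteq [0,1)$ under the surjective branch $\left.\sT_{m,\rho}\right|_{\omega_1} : \omega_1 \to [0,1)$, which produces a nonempty subinterval of $\omega_1$. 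Pick any $x_n \in A_n$. Then $\sT_{m,\rho}^k x_n \in \omega_1$ for $0 \le k \le n-1$, so $\abs{(\sT_{m,\rho}^n)'(x_n)} = \bigl(\tfrac{m}{m-3}\bigr)^n$, whence $\norm{\tfrac{1}{(\sT_{m,\rho}^n)'}}_{L^\infty}^{1/n} \ge \tfrac{m-3}{m}$ for all $n$, and the limit (which exists by submultiplicativity of $n \mapsto \log\norm{\tfrac{1}{(\sT_{m,\rho}^n)'}}_{L^\infty}$, or one can simply take $\liminf$) inherits the bound.

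I expect no serious obstacle here; the only point requiring a line of care is checking that $A_n$ is genuinely nonempty and nondegenerate rather than shrinking to a point prematurely — this is immediate from the fact that $\left.\sT_{m,\rho}\right|_{\omega_1}$ maps $\omega_1$ \emph{onto all of} $[0,1)$, so the preimage within $\omega_1$ of any nondegenerate interval is again nondegenerate. (One should also note that the fixed point $0 \in \omega_1$ with $\sT_{m,\rho}(0)=0$ already shows $A_n \neq \emptyset$ directly, although $0$ sits on the boundary of the partition; taking an interior point $x_n \in A_n$ avoids any subtlety about the value of the derivative at partition endpoints.) This completes the proof of Lemma~\ref{lem:KellRadius}, and combined with Lemma~\ref{lem:ourRadius} it yields, for $m$ large enough that $\tfrac{m-3}{m} - \tfrac{1}{m} = \tfrac{m-4}{m} > c$, a transformation witnessing Theorem~\ref{thm:example}.
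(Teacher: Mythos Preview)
Your argument is correct and follows essentially the same approach as the paper: both exploit that orbits can remain in $\omega_1$ for $n$ steps (the paper uses the explicit intervals $I_n = [0,\tfrac{1}{10}(\tfrac{m-3}{m})^n]$ around the fixed point $0$, while you describe the full set $A_n = \bigcap_{k=0}^{n-1} \sT_{m,\rho}^{-k}\omega_1$ and note that $0$ lies in it), yielding $\abs{(\sT_{m,\rho}^n)'} = (\tfrac{m}{m-3})^n$ there. The paper's version is slightly more concrete, but the idea is identical.
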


\begin{proof}
    We take advantage of the fact that \(0\) is a fixed point.  For every \(n\in \bN\), let \(I_{n} = [0,\frac{1}{10}(\frac{m-3}{m})^{n}]\).
    We observe that \( \norm{{1}/{(T^n)'}}_{L^\infty([0,1])} \ge  \norm{{1}/{(T^n)'}}_{L^\infty(I_n)}\).
    On the other hand, for every \(m \ge 4\), \(\norm{{1}/{(T^n)'}}_{L^\infty(I_n)} = (\frac{m-3}{m})^{n}\), since \(T^{k}(x) \in \omega_{1}\) for every \(k \le n\).
\end{proof}

Let \(c\in (0,1)\) and choose \(m\) sufficiently large such that
\[
    \frac{m-3}{m} - \frac{1}{m} > c.
\]
Then, by Lemma \ref{lem:ourRadius}, we can fix \(\rho\) such that \(1\) is a non-trivial discontinuity for the transformation \(T_{m,\rho}\) and \(\Lambda(\sT_{m,\rho}, \tfrac{1}{\sT_{m,\rho}^\prime}) = \frac{1}{m}\).
On the other hand, thanks to Lemma~\ref{lem:KellRadius}, \(        \lim_{n\to \infty}\norm{ {{1}/{(\sT_{m,\rho}^n)'}} }_{L^\infty}^{1/n}  \ge \frac{m-3}{m}\), independent of \(\rho\).
This concludes the proof of Theorem~\ref{thm:example}.

\section{Custom Banach space}%
\label{sec:custom}

This section is devoted to the construction of custom Banach spaces,
and hence the proof of Theorem \ref{thm:custom}.

The argument of Keller~\cite{Keller84} for the lower bound to the essential spectral radius on \(\BV\) suggests that we need to be more restrictive on the possibility of jumps in the observables.
A first choice could be \(\SBV\), the space of ``special'' bounded variation functions.
However this space is still too large in the sense that it still permits the construction that Keller uses.
Learning from this we permit discontinuities only where we absolutely need to because
of the discontinuities of the interval transformation. This permits us to encode the asymptotic behaviour of the
discontinuities introducing norms adapted to their orbits.

Throughout this section we work under the assumptions of Theorem \ref{thm:custom}, in particular we suppose that \(\sT:[0,1] \to [0,1]\) is a \(\cC^\infty\) piecewise monotone transformation (with associated partition \(\Omega\)) and that \(\varphi:[0,1]\to \bC\) has a \(\cC^\infty\) extension to \(\overline{\omega}\) for each \(\omega \in \Omega\) and is uniformly bounded away from zero.
Moreover \(\sT\) is uniformly expanding.
Let \(\eta = \lim_{n\to \infty}\norm{\smash{\prod_{k=0}^{n-1}\varphi(\sT^k)}}_{L^\infty}^{1/n}\)
and \(\lambda = \lim_{n\to \infty} \norm{1/(\sT^n)'}_{L^\infty}^{1/n} < 1\).
Fix, for the remainder of this section, \(\widetilde{\Lambda}>\Lambda^{\sup}(T, \varphi)\).
Since \(\lambda<1\) we may choose \(r\in \bN\), \(r\geq 1\) sufficiently large such that \(\eta \lambda^{r-1} < \widetilde{\Lambda}\).

\begin{remark}
    The assumptions of Theorem~\ref{thm:custom} can be relaxed to \(\sT\) and \(\varphi\) being merely \(\cC^r\) as long as \(r\in \bN\) is sufficiently large such that the above inequality is satisfied.
\end{remark}

Recall that, as defined in Section~\ref{sec:lower}, \(\Gamma \subset \cI\) denotes the boundary of the partition elements \(\omega \in \Omega\) and \(\Delta\) denotes \(\cup_{k=0}^{\infty}\sT^k\Gamma\), the set of all images of all discontinuities.
The following is an extension and rephrasing of Lemma~\ref{lem:ak-start} and Lemma~\ref{lem:disc-orbit}.

\begin{lemma}
    \label{lem:delta}
    There exists a finite set of points \({\{b_k\}}_{k=1}^{N_2}\) and finite number (indexed by \(1\leq j \leq N_1\)) of countable sets of points \({\{a_{j,k}\}}_{k=0}^{\infty}\) such that,
    \(a_{j,k} \notin \Gamma\),
    \[
        \Delta = {\{b_k\}}_{k=1}^{N_2} \cup \bigcup_{j=1}^{N_1} {\{a_{j,k}\}}_{k=0}^{\infty},
    \]
    and,
    \begin{equation}
        \label{eq:good-a}
        \sT^{-n}a_{j,k} \cap \Delta = \{a_{j,k-n}\},
        \quad \quad
        \text{ for all \( j \), \(k \geq n\)}.
    \end{equation}
\end{lemma}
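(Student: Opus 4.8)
The plan is to run, orbit by orbit, the arguments behind Lemma~\ref{lem:ak-start} and Lemma~\ref{lem:disc-orbit}, and to collect all the (finitely many) exceptional points into the set \({\{b_k\}}\). Recall that \(\Delta\) is the union of the \(2N\) discontinuity orbits \({\{\sT^k(c_\ell^{\pm})\}}_{k\ge 0}\), each of which, being a forward orbit of the deterministic map \(\sT:\cI\to\cI\), is either finite or consists of pairwise distinct points --- a forward orbit that revisits a point is eventually periodic and hence finite. First I would discard the discontinuity orbits that happen to be finite sets, placing their (finitely many) points into \({\{b_k\}}\); no such orbit can share a point with an infinite one, since the forward orbit of a point on a finite orbit is finite while that of a point on an infinite orbit is not. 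It then remains to handle the at most \(2N\) infinite discontinuity orbits, which after truncation will supply the sequences \({\{a_{j,k}\}}_{k\ge 0}\); in particular \(N_1\le 2N\).

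The key point is that each infinite discontinuity orbit \(O\) becomes ``clean'' after finitely many steps. Indeed, \(O\) meets \(\Gamma\) at most \(\#\Gamma\) times, being injective with \(\Gamma\) finite; and for any other infinite discontinuity orbit \(O^{\prime}\), either \(O\) and \(O^{\prime}\) are disjoint or, by determinism, they coincide from their first common point onwards, so \(O\) suffers only finitely many such ``collisions'', each at a definite finite time. Combining this with the injectivity of \(O\) one obtains \(M_O\in\bN\) such that, for every \(m\ge M_O\), the \(m\)-th point of \(O\) lies outside \(\Gamma\) and its only \(\sT\)-preimage in \(\Delta\) is the \((m-1)\)-st point of \(O\): any other \(\Delta\)-preimage would have to lie on an infinite orbit (finite orbits meet no infinite orbit) and would then force a collision of \(O\) with that orbit at a time \(\ge M_O\), contrary to the choice of \(M_O\). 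For the \(j\)-th infinite orbit, writing \(M_j\) for this bound, I would let \(a_{j,k}\) be its \((M_j+k)\)-th point, \(k\ge 0\), and put the finite initial segment of indices \(<M_j\) into \({\{b_k\}}\). Then each \(a_{j,k}\notin\Gamma\), the set \({\{b_k\}}\) is a finite union of finite sets, and \(\Delta={\{b_k\}}_{k=1}^{N_2}\cup\bigcup_{j=1}^{N_1}{\{a_{j,k}\}}_{k\ge 0}\).

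Finally I would prove \eqref{eq:good-a} by induction on \(n\). The case \(n=1\) is exactly the ``clean'' property above: for \(k\ge 1\) we have \(M_j+k>M_j\), hence \(\sT^{-1}a_{j,k}\cap\Delta={\{a_{j,k-1}\}}\). For the inductive step one uses that \(\Delta\) is forward invariant (\(\sT\Delta\subseteq\Delta\), immediate from \(\Delta=\bigcup_{k\ge 0}\sT^k\Gamma\)): if \(z\in\sT^{-(n+1)}a_{j,k}\cap\Delta\) with \(k\ge n+1\), then \(\sT z\in\Delta\) and \(\sT^{n}(\sT z)=a_{j,k}\), so \(\sT z=a_{j,k-n}\) by the inductive hypothesis, and then the case \(n=1\) (applicable since \(k-n\ge 1\)) gives \(z=a_{j,k-n-1}\); the reverse inclusion is clear because the points \(a_{j,\cdot}\) form a genuine forward orbit.

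The main obstacle I anticipate is organisational rather than conceptual: making the ``collision'' bookkeeping watertight --- verifying that each infinite orbit meets \(\Gamma\), and meets each of the finitely many other discontinuity orbits, only finitely often and always at a definite finite time, so that a single finite truncation index \(M_j\) genuinely suffices --- while remaining careful about the one-sided, \(\cI\)-valued formalism used for the orbits and for \(\sT^{-1}\). No analytic input is needed; everything rests on the elementary fact that an infinite forward orbit of a deterministic map consists of pairwise distinct points.
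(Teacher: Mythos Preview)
Your proposal is correct and follows essentially the same approach as the paper's proof, which simply invokes Lemmas~\ref{lem:ak-start} and~\ref{lem:disc-orbit} for each infinite discontinuity orbit and declares the leftover points to be the \(b_k\). Your version is considerably more explicit---in particular you spell out the collision bookkeeping between distinct infinite orbits and supply the induction on \(n\) for \eqref{eq:good-a}, both of which the paper leaves implicit---but the underlying idea is identical.
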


\begin{proof}
    As mentioned at the beginning of Section~\ref{sec:lower}, there are at most a finite number of non-trivial discontinuity orbits and this is the origin of \(N_1\).
    Lemma~\ref{lem:disc-orbit} implies the result stated here, with the minor difference that now we keep track of all discontinuity orbits.
    For each of these infinite orbits there exists some iterate after which the required property \eqref{eq:good-a} is satisfied and the orbit will not return to \(\Gamma\) (Lemma~\ref{lem:ak-start} \& Lemma~\ref{lem:disc-orbit}).
    In this way we define the \(a_{j,k}\).
    The \(b_k\) are defined to be the finite number of points of \(\Delta\) remaining.
\end{proof}

For the remainder of this section we suppose that the \(b_k\) and \(a_{j,k}\) are fixed as determined in Lemma~\ref{lem:delta}.
We write \(\partial\Omega_{n} = \cup_{\omega \in \Omega_n} \partial \omega\).

\begin{lemma}
    \label{lem:disc-of-B} 
    \(T^{-n}(a_{j,k+n}) \cap \partial \Omega_{n} = \emptyset \)
    for all \(j,k,n\).
    In particular \(a_{j,k} \notin \partial \Omega_{n}\).
\end{lemma}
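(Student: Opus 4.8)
The plan is to argue by contradiction, exploiting the shift-like structure \eqref{eq:good-a} of the discontinuity orbits together with the combinatorial description of the partitions \(\Omega_n\). The starting observation is that \(\partial\Omega_n \subseteq \bigcup_{m=0}^{n-1} T^{-m}\Gamma\): since \(\Omega_n\) is the common refinement of \(\Omega, T^{-1}\Omega, \ldots, T^{-(n-1)}\Omega\) (as recalled in the proof of Lemma~\ref{lem:pw-cont}, where each \(\omega \in \Omega_n\) is written as \(\omega_{i_0}\cap T^{-1}\omega_{i_1}\cap\cdots\cap T^{-(n-1)}\omega_{i_{n-1}}\)), every \(x \in \partial\Omega_n\) admits a smallest index \(m \in \{0,\ldots,n-1\}\) such that \(T^j x \notin \Gamma\) for all \(j < m\) — so that \(T^m\) is given by a single continuous branch in a neighbourhood of \(x\) — and \(T^m x \in \Gamma\).

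Now suppose, for a contradiction, that some \(x\) lies in \(T^{-n}(a_{j,k+n}) \cap \partial\Omega_n\). Choose \(m \le n-1\) as above, so that \(T^m x \in \Gamma \subseteq \Delta\). Applying \(T\) a further \(n-m\) times gives \(T^{n-m}(T^m x) = T^n x = a_{j,k+n}\), that is, \(T^m x \in T^{-(n-m)}(a_{j,k+n})\). Applying \eqref{eq:good-a} with the pair \((k+n,\,n-m)\) in place of \((k,\,n)\) — which is legitimate since \(k+n \ge n-m\) — yields \(T^{-(n-m)}(a_{j,k+n}) \cap \Delta = \{a_{j,k+m}\}\), and therefore \(T^m x = a_{j,k+m}\). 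But \(a_{j,k+m} \notin \Gamma\) by Lemma~\ref{lem:delta}, contradicting \(T^m x \in \Gamma\). This proves the first assertion.

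For the ``in particular'' statement I would first note that \eqref{eq:good-a} with \(n=1\) forces \(T a_{j,\ell} = a_{j,\ell+1}\) for every \(\ell\) (the unique \(\Delta\)-preimage \(a_{j,\ell}\) of \(a_{j,\ell+1}\) lies outside \(\Gamma\), where \(T\) is single-valued and continuous), hence \(T^n a_{j,k} = a_{j,k+n}\) and so \(a_{j,k} \in T^{-n}(a_{j,k+n})\); the first part then gives \(a_{j,k} \notin \partial\Omega_n\). The only point requiring care is the bookkeeping around the multivalued preimage / one-sided-limit convention for \(T\): one must check that the statements ``\(T^m x \in \Gamma\)'' and ``\(T^n x = a_{j,k+n}\)'' refer to compatible branches of the iterates, and that \eqref{eq:good-a} is invoked with the index shift performed correctly. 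Beyond that the argument is a routine unwinding of the definitions, and I do not expect any substantial obstacle.
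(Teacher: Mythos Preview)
Your argument is correct and follows essentially the same route as the paper's proof: assume a point \(x\in T^{-n}(a_{j,k+n})\cap\partial\Omega_n\), find an iterate \(T^m x\in\Gamma\subseteq\Delta\) with \(m\le n-1\), and then use \eqref{eq:good-a} to force \(T^m x=a_{j,k+m}\notin\Gamma\), a contradiction. The paper's version is terser (it does not spell out the inclusion \(\partial\Omega_n\subseteq\bigcup_{m=0}^{n-1}T^{-m}\Gamma\) or the index shift in applying \eqref{eq:good-a}), but the content is the same; your added remarks about choosing the minimal \(m\) and the branch bookkeeping are sound and only make the argument more explicit.
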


\begin{proof}
    Suppose, for sake of contradiction, there exists \(y\) such that \(T^{n}y = a_{j,k+n}\) and \(T^{p}y \in \Gamma\) for \(p \le n-1\).
    Then, \(T^{n-p}T^{p}y = a_{j,k+n}\) and so there exists \(a \in \Gamma\) such that  \(\sT^{n-p} a = a_{j,k+n}\).
    This is a contradiction by Lemma~\ref{lem:delta}.
    The final claim follows from the fact that \(a_{j,k} \in T^{-n}(a_{j,k+n})\).
\end{proof}

We say that  \(h: [0,1] \to \bC\) is piecewise \(\cC^{\infty}\) with respect to a finite partition \(0=p_{0} < p_{1}<... <p_{N} =1\),
if \({\left.h\right|}_{(p_{l}, p_{l+1})}\) admits a \(\cC^{\infty}\) extension
up to \([p_{l}, p_{l+1}]\), for all \(0 \le l \le N-1\).
We define \(\fB_0\) to be the set of all such \(h\) for which there exists a finite partition of \([0,1]\) with \(p_l \in \Delta\),
which makes \(h\) piecewise \(\cC^{\infty}\) with respect to it. 

Recall that \(\jump{\cdot}{\cdot}\) was defined \eqref{eq:def-jump}  to be the jump in a piecewise \(\cC^{\infty}\) function at a given point.

\begin{lemma}
    \label{lem:calc-jump-2}
    Let \(h \in \fB_0\), 
    \(k \in \bN_0\), \(0\leq j \leq N_1\).
    Then
    \begin{equation*}
        \abs{\jump{\cL_{\varphi} h}{a_{j,k+1}}} =  \abs{\varphi(a_{j,k})} \ \abs{\jump{h}{a_{j,k}}}.
    \end{equation*}
\end{lemma}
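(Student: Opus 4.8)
The plan is to mimic the proof of Lemma~\ref{lem:calc-jump}, now tracking orientation-free magnitudes only. First I would expand \(\cL_{\varphi} h\) as the sum~\eqref{eq:L-defin} of terms \((\varphi \cdot h) \circ ({\left.\sT\right|}_{\omega})^{-1} \cdot \characteristic{\sT\omega}\) over \(\omega \in \Omega\), and ask which of these terms can be discontinuous at the point \(a_{j,k+1}\). A term can fail to be continuous at \(a_{j,k+1}\) for one of two reasons: either \(a_{j,k+1}\) is an endpoint of the interval \(\sT\omega\) — i.e.\ \(a_{j,k+1} \in \partial\Omega_1\) — or the pulled-back function \((\varphi\cdot h)\circ({\left.\sT\right|}_\omega)^{-1}\) is itself discontinuous at the preimage point inside \(\omega\). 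By Lemma~\ref{lem:disc-of-B} (with \(n=1\), \(k\) shifted) we have \(a_{j,k+1} \notin \partial\Omega_1\), so the first possibility is ruled out. For the second, \(\varphi\) is \(\cC^\infty\) on each \(\overline\omega\) and hence continuous at every point of \(\sT^{-1}a_{j,k+1}\), while \(h \in \fB_0\) can only jump at points of \(\Delta\); thus the relevant contributions come precisely from the points of \(\sT^{-1}a_{j,k+1} \cap \Delta\). By \eqref{eq:good-a} in Lemma~\ref{lem:delta}, \(\sT^{-1}a_{j,k+1} \cap \Delta = \{a_{j,k}\}\), so exactly one branch — the \(\omega\) containing \(a_{j,k}\) — contributes.

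Next I would compute the size of that single jump. On the relevant branch the contributing term is \((\varphi \cdot h) \circ ({\left.\sT\right|}_\omega)^{-1}\), and a one-sided limit of this at \(a_{j,k+1}\) equals \(\varphi\big((\left.\sT\right|_\omega)^{-1}(a_{j,k+1})^{\pm}\big)\cdot h\big((\left.\sT\right|_\omega)^{-1}(a_{j,k+1})^{\pm}\big)\). Since the preimage point is \(a_{j,k}\) and \(\varphi\) is continuous there, the \(\varphi\)-factor is the same constant \(\varphi(a_{j,k})\) on both sides, and factors out. What remains is \(\varphi(a_{j,k})\) times the difference of the two one-sided limits of \(h\) at \(a_{j,k}\), and the two sides of \(a_{j,k+1}\) correspond to the two sides of \(a_{j,k}\) in one order or the other depending on whether \({\left.\sT\right|}_\omega\) is increasing or decreasing at \(a_{j,k}\). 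Taking absolute values kills this sign ambiguity, so \(\abs{\jump{\cL_{\varphi} h}{a_{j,k+1}}} = \abs{\varphi(a_{j,k})}\,\abs{\jump{h}{a_{j,k}}}\), which is the claim. One should also note \(\cL_\varphi h \in \fB_0\) so that \(\jump{\cL_\varphi h}{\cdot}\) makes sense — this follows exactly as in Lemma~\ref{lem:pw-cont}, since \(\cL_\varphi\) maps piecewise \(\cC^\infty\) functions with breakpoints in \(\Delta\) to functions of the same type (the new breakpoints lie in \(\sT(\Delta) \cup \Gamma \subseteq \Delta\)).

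I do not expect a serious obstacle here: the statement is essentially Lemma~\ref{lem:calc-jump} stripped of the orientation bookkeeping and stated for all the discontinuity orbits at once, so Lemmas~\ref{lem:delta} and~\ref{lem:disc-of-B} do all the heavy lifting of isolating the unique contributing branch. The one point needing a little care is the edge case \(k = 0\): the displayed identity is asserted for all \(k \in \bN_0\), but \eqref{eq:good-a} and Lemma~\ref{lem:disc-of-B} are stated for \(k \geq n\), which with \(n=1\) requires \(k+1 \geq 1\), i.e.\ \(k \geq 0\) — so it is in fact covered, provided one reads the indices carefully and uses that \(a_{j,0} \notin \Gamma\) (part of Lemma~\ref{lem:delta}) to ensure \(\varphi\) is continuous at \(a_{j,0}\). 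The only other thing to double-check is that no \(b_k\) or stray point of \(\Delta\) sneaks into \(\sT^{-1}a_{j,k+1}\); but this is precisely what \eqref{eq:good-a} forbids.
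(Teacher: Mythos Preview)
Your proposal is correct and follows essentially the same approach as the paper. The paper's own proof is a single sentence pointing back to Lemma~\ref{lem:calc-jump} and noting that the only property needed is that \(h\) has discontinuities solely in \(\Delta\); you have spelled out that argument in full, correctly substituting the Section~\ref{sec:custom} structural lemmas (Lemma~\ref{lem:delta} and Lemma~\ref{lem:disc-of-B}) for their Section~\ref{sec:lower} analogues, and your careful check of the edge case \(k=0\) and of the invariance \(\cL_\varphi \fB_0 \subseteq \fB_0\) (which is~\eqref{eq:good-B}) is appropriate.
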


\begin{proof}
    This holds, as proved in Lemma~\ref{lem:calc-jump}, since the crucial property is that \(h\) only has discontinuities at points in \(\Delta\).
\end{proof}

\subsection{The Banach space}
We use the notation \(\apart{}\) and \(\jpart\) to denote the absolutely continuous part and jump part of the derivative respectively.
In other words, for any \(h\in \SBV\) (see, e.g.,~\cite{AFP00}), the weak derivative has the form \(\dD h = \apart{h} + \jpart{h}\) where \(\apart{h} \in L^{1}\) and \(\jpart{h}\) can be written as a sum of delta functions.
Note that when \(h\) is piecewise-\(\cC^{\infty}\) \(\apart h = h^{\prime}\), wherever the latter is well-defined.

We observe that 
\begin{equation}
    \label{eq:good-B}
    \apart \fB_0 \subseteq \fB_0,
    \quad \quad 
    \cL_{\varphi} \fB_0 \subseteq \fB_0.
\end{equation}

Recall that \(r\in \bN\) was fixed at the beginning of this section. 
Let \(\zeta = {\{\zeta_{a}\}}_{a\in\Delta}\) be a set of positive numbers (to be defined shortly).
For any \(h\in \fB_0\) we define
\begin{equation}
    \label{eq:def-custom}
    \norm{h}_{\fB_{\zeta,r}} =
    \sum_{t=0}^{r} \norm{\apart^{t} h}_{L^1}
    + \sum_{t=0}^{r-1} \norm{\apart^{t} h}_{\fJ_{\zeta}}
\end{equation}
where
\[
    \norm{h}_{\fJ_{\zeta}} = \sum_{a\in \Delta} \zeta_{a} \ \abs{\jump{h}{a}}.
\]
We refer to the first sum in this definition as the \emph{continuous} part of the norm and the second sum as the \emph{jump} part of the norm.

This norm has some resemblance to \(\BV\) or \(\SBV\).
In the case when \(r=0\), the norm corresponds to the \(L^1\) norm.
In the case when \(r=1\) and \(\zeta_a\) is equal to unity for every \(a\in \Delta\), then the norm corresponds to the usual \(\BV\) norm and the Banach space is the subset of \(\SBV\) (in turn a subset of \(\BV\)) where discontinuities are only permitted at points in \(\Delta\).
As anticipated, we now fix \(\zeta\).
For all \(1 \le j \le N_{1}\) and all \(k \in \mathbb{N}_{0}\), let
\begin{equation}
    \label{eq:def-zeta}
    \zeta(j,k) = \frac{\widetilde{\Lambda}^{k}}{|\varphi_{k}(a_{j,0})|},
\end{equation} 
where, as before, \(\varphi_{k} = \prod_{j=0}^{k-1}\varphi \circ T^{j}\). 
We hence define,
for any \(h \in \fB_{0}\),
\begin{equation}
    \label{eq:zeta-norm}
    \norm{h}_{\fJ_\zeta}
    = \sum_{j=1}^{N_1} \sum_{k=0}^{\infty} \zeta(j,k) \abs{\jump{h}{a_{j,k}}}
    + \sum_{k=1}^{N_2} \abs{\jump{h}{b_k}}.
\end{equation}
This corresponds to setting \(\zeta_{a_{j,k}} = \zeta(j,k) \), for all \(j\), \(k\) and setting \(\zeta_{a} = 1\) otherwise.

The Banach space \(\mathfrak{B}_{\zeta, r}\) is defined as the closure of \(\fB_{0}\) with respect to the \(\norm{\cdot}_{\fB_{r,\zeta}}\) norm.

\begin{lemma}
    \label{lem:well-def}
    For all \( h \in \fB_{0}\), \( \norm{h}_{\fB_{\zeta,r}} < \infty\).
\end{lemma}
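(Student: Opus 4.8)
The plan is to exploit the fact that the right-hand side of \eqref{eq:def-custom} is a \emph{finite} sum of terms and to bound each one separately, using crucially that, by definition, every element of \(\fB_0\) is piecewise \(\cC^\infty\) with respect to some \emph{finite} partition whose breakpoints lie in \(\Delta\); in particular it has only finitely many discontinuities, all in \(\Delta\).

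First I would record that \(\apart^{t} h \in \fB_0\) for every \(0\le t\le r\). Indeed, if \(h\) is piecewise \(\cC^\infty\) with respect to a finite partition \(0=p_0<\dots<p_M=1\) with \(p_l\in\Delta\), then on each \((p_l,p_{l+1})\) the \(\cC^\infty\) extension of \(h\) to \([p_l,p_{l+1}]\) differentiates termwise, so \(\apart^{t} h\) is piecewise \(\cC^\infty\) with respect to the \emph{same} partition; this is the content of the first inclusion in \eqref{eq:good-B}, iterated. Being continuous on each of the finitely many compact intervals \([p_l,p_{l+1}]\), every \(\apart^{t} h\) is (essentially) bounded on \([0,1]\), hence lies in \(L^1([0,1])\). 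Therefore the continuous part \(\sum_{t=0}^{r}\norm{\apart^{t} h}_{L^1}\) is finite.

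For the jump part it suffices, by the previous paragraph, to show that \(\norm{g}_{\fJ_\zeta}<\infty\) for an arbitrary \(g\in\fB_0\). Since \(g\) is piecewise \(\cC^\infty\) with respect to \(\{p_l\}\), we have \(\jump{g}{a}=0\) for every \(a\notin\{p_1,\dots,p_{M-1}\}\), so by \eqref{eq:zeta-norm}
\[
    \norm{g}_{\fJ_\zeta}=\sum_{l=1}^{M-1}\zeta_{p_l}\,\abs{\jump{g}{p_l}},
\]
a sum of finitely many terms. It remains to check that each weight \(\zeta_a\), \(a\in\Delta\), is finite. This is immediate when \(\zeta_a=1\); and for \(a=a_{j,k}\) we have \(\zeta(j,k)=\widetilde{\Lambda}^{k}/\abs{\varphi_{k}(a_{j,0})}\) by \eqref{eq:def-zeta}, where, since \(\varphi\) is uniformly bounded away from zero, say \(\abs{\varphi}\ge\delta>0\), one gets \(\abs{\varphi_{k}(a_{j,0})}=\prod_{i=0}^{k-1}\abs{\varphi(\sT^{i}a_{j,0})}\ge\delta^{k}>0\) and hence \(\zeta(j,k)\le(\widetilde{\Lambda}/\delta)^{k}<\infty\). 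Thus \(\norm{\apart^{t} h}_{\fJ_\zeta}<\infty\) for each \(0\le t\le r-1\), and combining with the bound on the continuous part yields \(\norm{h}_{\fB_{\zeta,r}}<\infty\).

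I do not anticipate a real obstacle: the statement is a well-definedness check for the norm on the dense subspace \(\fB_0\), needed before one can speak of its closure. The only place an hypothesis of Theorem~\ref{thm:custom} is genuinely used is the finiteness of the weights \(\zeta(j,k)\), which rests on \(\varphi\) being bounded away from zero; note that only finiteness of each individual \(\zeta(j,k)\) is needed here — the (non-)summability of \(\{\zeta(j,k)\}_k\) is irrelevant for \(\fB_0\) and will only become an issue when extending the norm to limit points.
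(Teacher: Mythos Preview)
Your proof is correct and follows essentially the same approach as the paper: invoke \eqref{eq:good-B} to see that \(\apart^{t}h\in\fB_0\), conclude the \(L^1\) terms are finite since these functions are bounded on finitely many compact intervals, and observe that the jump sums have only finitely many nonzero terms. You add the explicit verification that each weight \(\zeta(j,k)\) is finite (using that \(\varphi\) is bounded away from zero), a detail the paper leaves implicit; this is a welcome clarification but not a different argument.
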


\begin{proof}
    We know that~\eqref{eq:good-B}, \(\apart^{l}h \in \fB_{0}\), whenever \(0 \le l \le r\), and the restriction of \(\apart^{l}h\) to any interval of continuity 
    admits a \(\cC^{\infty}\) extension up to the boundary of such interval.
    Notice that each \(h \in \fB_{0}\) has only a finite number of discontinuities and so the jump part of the norm is finite.
    Moreover, \(\apart^{l}h\) has finite \(L^{1}\)-norm.
\end{proof}

\begin{lemma}
    \label{lem:Da-bounded}
    If \( s + l \leq r\), then
    \(\norm{\smash{\apart^{l}h}}_{\fB_{\zeta, s}} \leq \norm{h}_{\fB_{\zeta, r}}\) for all \(h\in \fB_{\zeta,r}\).
\end{lemma}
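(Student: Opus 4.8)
The plan is to prove the inequality first on the dense subspace \(\fB_0\) by directly unfolding the definition of the norm, and then extend it to all of \(\fB_{\zeta,r}\) by density.

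First I would record that by \eqref{eq:good-B} the operator \(\apart\) maps \(\fB_0\) into itself, so \(\apart^{l}h \in \fB_0\) whenever \(h\in\fB_0\) and \(0\le l\le r\); in particular \(\norm{\apart^{l}h}_{\fB_{\zeta,s}}\) is well-defined and, by Lemma~\ref{lem:well-def}, finite. Unfolding the definition \eqref{eq:def-custom} of the norm,
\[
    \norm{\apart^{l}h}_{\fB_{\zeta,s}}
    = \sum_{t=0}^{s} \norm{\apart^{t+l} h}_{L^1}
    + \sum_{t=0}^{s-1} \norm{\apart^{t+l} h}_{\fJ_{\zeta}} .
\]
Substituting \(u=t+l\), the first sum ranges over \(u\in\{l,\ldots,s+l\}\) and the second over \(u\in\{l,\ldots,s+l-1\}\). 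Since \(s+l\le r\) we have \(\{l,\ldots,s+l\}\subseteq\{0,\ldots,r\}\) and \(\{l,\ldots,s+l-1\}\subseteq\{0,\ldots,r-1\}\), so every summand appearing on the right-hand side above also appears in the defining expression \eqref{eq:def-custom} for \(\norm{h}_{\fB_{\zeta,r}}\). As all summands are nonnegative, it follows that \(\norm{\apart^{l}h}_{\fB_{\zeta,s}}\le \norm{h}_{\fB_{\zeta,r}}\) for every \(h\in\fB_0\).

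To conclude, this estimate says precisely that \(\apart^{l}\colon (\fB_0,\norm{\cdot}_{\fB_{\zeta,r}}) \to (\fB_0,\norm{\cdot}_{\fB_{\zeta,s}})\) is bounded with operator norm at most one; since \(\fB_{\zeta,s}\) is by definition the completion of \(\fB_0\) in the \(s\)-norm, the operator extends uniquely to a bounded map \(\fB_{\zeta,r}\to\fB_{\zeta,s}\) of norm at most one, and the inequality persists on the closure. There is essentially no genuine obstacle here; the only point requiring a little care is the final step, namely that the extension of \(\apart^{l}\) to the abstract completion \(\fB_{\zeta,r}\) is consistent with the notion of \(\apart^{l}h\) used elsewhere in this section — but since \(\fB_0\) is dense in \(\fB_{\zeta,r}\) and \(\apart\) is continuous in these norms, this is immediate.
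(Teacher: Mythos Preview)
Your proof is correct and follows essentially the same approach as the paper: unfold the definition of the norm on \(\fB_0\), shift indices, observe that the resulting sums are sub-sums of those defining \(\norm{h}_{\fB_{\zeta,r}}\), and then extend by density. The paper's writeup is a bit terser on the density step but the argument is identical.
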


\begin{proof}
    By density it suffices to consider \(h\in \fB_0\).
    According to the definition of the norm \eqref{eq:def-custom},
    \[
        \begin{aligned}
            \norm{\smash{\apart^{l} h}}_{\mathfrak{B}_{\zeta, s}}
             & = \sum_{t=0}^{s} \norm{\smash{\apart^{l+t} h}}_{L^1}
            + \sum_{t=0}^{s-1} \norm{\smash{\apart^{l+t} h}}_{\fJ_\zeta} \\
             & \leq \sum_{t=0}^{s+l} \norm{\apart^{t} h}_{L^1}
            + \sum_{t=0}^{s+l-1} \norm{\apart^{t} h}_{\fJ_\zeta}
            = \norm{\smash{h}}_{\mathfrak{B}_{\zeta, s+l}}.
        \end{aligned}
    \]
    Since it was assumed that \( s + l \leq r\) the conclusion follows.
\end{proof}

\begin{lemma}
    \label{lem:BV}
    There exists \(C>0\) such that, for all \(l+1 \leq r\) and all \(h\in \fB_{\zeta,r}\),
    \[
        \norm{\smash{\apart^{l} h}}_{\BV}
        \leq
        C \norm{h}_{\fB_{\zeta,r}}.
    \]
\end{lemma}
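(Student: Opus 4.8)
The plan is to compare the \(\BV\) norm of \(\apart^l h\) with the pieces of the \(\fB_{\zeta,r}\)-norm that control \(\apart^l h\) and \(\apart^{l+1}h\). Recall that for a function \(g\in\SBV\) one has \(\norm{g}_{\BV} = \norm{g}_{L^1} + \norm{\dD g}_{TV}\), and since \(\dD g = \apart{g} + \jpart{g}\) with \(\apart{g}\in L^1\) and \(\jpart{g}\) a sum of signed Dirac masses, \(\norm{\dD g}_{TV} = \norm{\apart{g}}_{L^1} + \sum_{a}\abs{\jump{g}{a}}\). Applying this with \(g = \apart^l h\) (which, by \eqref{eq:good-B}, again lies in \(\fB_0\) and is piecewise-\(\cC^\infty\), so \(\apart(\apart^l h) = \apart^{l+1}h\)) gives
\[
    \norm{\apart^l h}_{\BV}
    = \norm{\apart^l h}_{L^1} + \norm{\apart^{l+1}h}_{L^1}
    + \sum_{a\in\Delta}\abs{\jump{\apart^l h}{a}}.
\]
The first two terms are each bounded by \(\norm{h}_{\fB_{\zeta,r}}\) directly from the continuous part of \eqref{eq:def-custom}, using \(l+1\le r\). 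The remaining task is to bound the bare jump sum \(\sum_{a\in\Delta}\abs{\jump{\apart^l h}{a}}\) by a constant multiple of \(\norm{\apart^l h}_{\fJ_\zeta}\), which is a term appearing in \(\norm{h}_{\fB_{\zeta,r}}\) precisely because \(l\le r-1\).

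The key point is therefore that the weights \(\zeta_a\) are bounded below by a positive constant, uniformly in \(a\in\Delta\). For \(a = b_k\) or for any \(a\) not of the form \(a_{j,k}\), the weight is exactly \(1\). For \(a = a_{j,k}\), the weight is \(\zeta(j,k) = \widetilde{\Lambda}^k / \abs{\varphi_k(a_{j,0})}\); since \(\varphi\) is uniformly bounded away from zero, say \(\abs{\varphi}\ge c_0 > 0\) on each \(\overline\omega\), and \(\abs{\varphi}\le \norm{\varphi}_{L^\infty} =: c_1 < \infty\), we have \(\abs{\varphi_k(a_{j,0})} \in [c_0^k, c_1^k]\). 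Recalling that \(\widetilde\Lambda > \Lambda^{\sup}(T,\varphi) \ge \limsup_k \abs{\varphi_k(a_{j,0})}^{1/k}\), one checks that \(\zeta(j,k)^{1/k}\to\) a limit \(\ge \widetilde\Lambda/\limsup\abs{\varphi_k}^{1/k} > 1\) along the relevant subsequence; more robustly, \(\zeta(j,k) \ge (\widetilde\Lambda/c_1)^k\) if \(c_1 \le \widetilde\Lambda\), and otherwise \(\zeta(j,k)\ge (\widetilde\Lambda/c_1)^k\) is still a lower bound which, combined with \(\inf_k\) over the finitely many \(j\), yields \(\inf_{a\in\Delta}\zeta_a =: c_\zeta > 0\). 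Hence for any \(g\in\fB_0\),
\[
    \sum_{a\in\Delta}\abs{\jump{g}{a}}
    \le c_\zeta^{-1}\sum_{a\in\Delta}\zeta_a\abs{\jump{g}{a}}
    = c_\zeta^{-1}\norm{g}_{\fJ_\zeta}.
\]
Applying this with \(g = \apart^l h\) and combining with the display above gives \(\norm{\apart^l h}_{\BV} \le 2\norm{h}_{\fB_{\zeta,r}} + c_\zeta^{-1}\norm{h}_{\fB_{\zeta,r}}\), so the lemma holds with \(C = 2 + c_\zeta^{-1}\), first for \(h\in\fB_0\) and then for all \(h\in\fB_{\zeta,r}\) by density (both sides are continuous in the \(\fB_{\zeta,r}\)-norm, the left because \(\norm{\cdot}_{\BV}\) of \(\apart^l(\cdot)\) is dominated by it as just shown).

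The main obstacle is the uniform lower bound \(c_\zeta > 0\) on the weights: one must verify that the exponentially-defined weights \(\zeta(j,k)\) do not decay to zero, which is exactly where the hypothesis \(\widetilde\Lambda > \Lambda^{\sup}(T,\varphi)\) and the uniform positivity and boundedness of \(\varphi\) enter. If \(\widetilde\Lambda \ge \norm{\varphi}_{L^\infty}\) this is immediate; in the remaining case one uses that, by definition of \(\Lambda^{\sup}\), \(\abs{\varphi_k(a_{j,0})}^{1/k}\) eventually does not exceed \(\widetilde\Lambda\) minus a fixed margin, so \(\zeta(j,k)\) is eventually \(\ge 1\), and the finitely many remaining \((j,k)\) contribute a positive minimum. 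Everything else is the standard identification of the \(\SBV\) norm with \(L^1\) plus absolutely-continuous-derivative plus jump contributions, applied to \(\apart^l h\), which is legitimate because \eqref{eq:good-B} guarantees \(\apart^l h \in \fB_0 \subset \SBV\) with the expected derivative structure.
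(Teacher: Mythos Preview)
Your approach is essentially the same as the paper's: decompose \(\norm{\apart^l h}_{\BV}\) into the \(L^1\) parts (controlled directly by the continuous part of the \(\fB_{\zeta,r}\)-norm) and the unweighted jump sum, then show the weights \(\zeta_a\) are bounded below so that the unweighted jump sum is dominated by \(\norm{\apart^l h}_{\fJ_\zeta}\). The paper does the same, phrasing it as \(\norm{h}_{\BV}\le C\norm{h}_{\fB_{\zeta,1}}\) and then invoking Lemma~\ref{lem:Da-bounded}, but that is only a cosmetic difference.

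The one place where your write-up wobbles is the justification of \(c_\zeta>0\) in the third paragraph. The bound \(\zeta(j,k)\ge(\widetilde\Lambda/c_1)^k\) with \(c_1=\norm{\varphi}_{L^\infty}\) is useless when \(c_1>\widetilde\Lambda\), since then \((\widetilde\Lambda/c_1)^k\to 0\); your sentence ``otherwise \(\zeta(j,k)\ge(\widetilde\Lambda/c_1)^k\) is still a lower bound which \ldots\ yields \(\inf_{a\in\Delta}\zeta_a>0\)'' is simply wrong as stated. The correct argument is the one you give in the final paragraph and is exactly what the paper uses: since \(\widetilde\Lambda>\Lambda^{\sup}(T,\varphi)\ge\limsup_k\abs{\varphi_k(a_{j,0})}^{1/k}\), there exists \(C>0\) with \(\abs{\varphi_k(a_{j,0})}\le C\,\widetilde\Lambda^{k}\) for all \(k\), hence \(\zeta(j,k)=\widetilde\Lambda^{k}/\abs{\varphi_k(a_{j,0})}\ge C^{-1}\). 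Drop the \(c_1\) detour and lead with this; then the proof is clean and matches the paper.
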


\begin{proof}
    There exists \(C>1\) such that, for all \(k \in \bN\), \(\abs{\varphi_k(a_{j,0})} \leq C \widetilde{\Lambda}^{k}\) (recall that \(\widetilde{\Lambda}> \Lambda^{\sup}(T, \varphi)\), which is the maximum of \(\Lambda^{\sup}(T, \varphi, a)\) where \(a\) ranges over the set of non-trivial discontinuities) and so, using the definition of \(\zeta\), 
    \[
        \frac{1}{\zeta(j,k)}
        = \abs{\varphi_k(a_{j,0})} \widetilde{\Lambda}^{-k}
        \leq C.  
    \]
    In particular, \(1 \leq C \zeta(j,k)\).
    Consequently, for all \(h\in \fB_0\),
    \[
        \begin{split}
            \sum_{a\in \Delta}|J(h,a)|
            &\le  {C} \sum_{j=1}^{N_1} \sum_{k=0}^{\infty} \zeta(j,k)\abs{\jump{h}{a_{j,k}}}
            + \sum_{k=1}^{N_2} \abs{\jump{h}{b_k}}\\
            &\le C {\|h\|}_{\fJ_\zeta}.
        \end{split}
    \]
    Observe that (since any discontinuities of \(h\) lie in \(\Delta\)),
    \[
        \begin{aligned}
            {\|h\|}_{\BV}
             & = {\|h\|}_{L^{1}}
            + {\|\apart{h}\|}_{L^{1}}
            + \sum_{a\in \Delta} \abs{J(h,a)}, \\
            {\|h\|}_{\fB_{\zeta,1}}
             & = {\|h\|}_{L^{1}}
            + {\|\apart{h}\|}_{L^{1}}
            + {\|h\|}_{\fJ_\zeta}.
        \end{aligned}
    \]
    Hence, we have shown that, for all \(h\in \fB_0\), \({\|h\|}_{\BV} \leq C {\|h\|}_{\fB_{\zeta,1}}\).
    
    Using this together with Lemma~\ref{lem:Da-bounded} and the density of \(\fB_{0}\) implies the statement of the lemma.
\end{proof}

\subsection{Lasota-Yorke inequality}

In the next lemmas we estimate the \(\fB_{r,\zeta}\) norm of \(\cL_\varphi^n\) and hence obtain a type of Lasota-Yorke inequality.
The following is the relevant distortion result in the present setting.

\begin{lemma}%
    \label{lem:super-Da}
    There exists \(C>0\) and a set \({\{A_{l,p,n}\}}_{l,p,n}\) of functions \(A_{l,p,n}: [0,1] \to \bC\) such that,
    for all \(h\in \fB_{0}\), \(0\leq p \leq r\), \(n\in \bN_{0}\),
    \begin{equation}
        \label{eq:Dp-of-Ln}
        D_{a}^{p}\left(\cL_{\varphi}^n h\right)
        = \sum_{l=0}^{p} \cL_{\varphi}^n(A_{l,p,n} \cdot \apart^{l}h),
    \end{equation}
    the \(A_{l,p,n}\) are \(\cC^{\infty}\) on each \(\omega \in \Omega_n\),
    \(\norm{A_{l,p,n}}_{L^\infty} \leq C\)
    and \(A_{p,p,n}  = {((T^n)')}^{-p}\).
\end{lemma}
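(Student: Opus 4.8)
The plan is to prove~\eqref{eq:Dp-of-Ln} by induction on $p$, treating $n$ as fixed, and using the iterated transfer operator formula~\eqref{eq:L-iterate} together with the chain rule and Leibniz rule applied on each monotonicity branch $\omega \in \Omega_n$. The base case $p=0$ is immediate with $A_{0,0,n} = 1$. For the inductive step, suppose~\eqref{eq:Dp-of-Ln} holds for some $p < r$; I would apply $\apart{}$ to both sides. The key observation is that differentiating $\cL_\varphi^n(A_{l,p,n} \cdot \apart^l h)$ amounts, branch-by-branch, to differentiating a composition $(\varphi_n \cdot A_{l,p,n} \cdot \apart^l h) \circ (\sT^n|_\omega)^{-1}$ against the indicator of $\sT^n\omega$; since all the points where the branches are cut lie in $\Delta$ and $A_{l,p,n}$, $\varphi_n$ are $\cC^\infty$ on each $\omega \in \Omega_n$, the derivative of the indicator contributes nothing in the interior, so on each $\sT^n\omega$ one gets $\left((\varphi_n \cdot A_{l,p,n} \cdot \apart^l h)' \circ (\sT^n|_\omega)^{-1}\right)\cdot \left((\sT^n)^{-1}\right)'$. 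Expanding the product derivative via Leibniz and using $((\sT^n)^{-1})' = 1/((\sT^n)'\circ(\sT^n)^{-1})$ lets one re-absorb everything back into the form $\cL_\varphi^n(\widetilde{A}\cdot \apart^l h)$ or $\cL_\varphi^n(\widetilde{A}\cdot \apart^{l+1} h)$, which defines the $A_{l,p+1,n}$ recursively: schematically $A_{l,p+1,n}$ is built from $\apart(A_{l,p,n})\cdot((\sT^n)')^{-1}$, $A_{l,p,n}\cdot \apart\!\left(((\sT^n)')^{-1}\right)$ and $A_{l-1,p,n}\cdot((\sT^n)')^{-1}$. The leading term follows since $A_{p,p,n} = ((\sT^n)')^{-p}$ contributes $((\sT^n)')^{-p}\cdot((\sT^n)')^{-1} = ((\sT^n)')^{-(p+1)}$ to $A_{p+1,p+1,n}$, with no competing contribution from $A_{p+1,p,n}$ since that index did not exist.

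The $\cC^\infty$ regularity of each $A_{l,p,n}$ on $\omega \in \Omega_n$ is preserved at every step because $\sT^n|_\omega$ is a $\cC^\infty$ (indeed $\cC^r$ suffices, as in the remark) diffeomorphism bounded away from having critical points by uniform expansion, so $((\sT^n)')^{-1}$ and all its derivatives are $\cC^\infty$ on $\omega$; products and derivatives of such functions stay in the same class.

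The main obstacle, and the only genuinely non-routine part, is the uniform bound $\norm{A_{l,p,n}}_{L^\infty} \le C$ with $C$ independent of $n$. This is exactly a bounded-distortion statement and is why the paper announces ``the required distortion result is the content of Section~\ref{sec:distort}''. The recursion above shows $A_{l,p,n}$ is a sum of products of terms of the form $\apart^{s}\!\left(((\sT^n)')^{-1}\right)$ and iterated such derivatives, multiplied by negative powers of $(\sT^n)'$; naively these involve $(\sT^n)'' = \sum_{k=0}^{n-1}(\sT')'(\sT^k)\,(\sT^k)'\,\prod_{i\neq k}\sT'(\sT^i)$, which grows like $(\sT^n)'$, so one must check that after division by $(\sT^n)'$ the expressions telescope into convergent geometric-type sums controlled by $\lambda<1$ and $\norm{\sT''/\sT'}_{L^\infty}$ (and higher analogues). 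The plan is therefore to reduce~\eqref{eq:Dp-of-Ln} to the identity and regularity claims here, defining the $A_{l,p,n}$ explicitly through the recursion, and to defer the quantitative $L^\infty$ estimate to the distortion lemma of Section~\ref{sec:distort}, invoking it to close the argument.
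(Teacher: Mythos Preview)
Your overall approach---induction on $p$ to define the $A_{l,p,n}$ recursively, followed by a distortion argument for the uniform bound---matches the paper's Section~\ref{sec:distort}. However, there are two concrete problems.

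First, your schematic recursion is wrong. Applying $\apart$ to $\cL_\varphi^n(A_{l,p,n}\cdot\apart^l h)$ and expanding Leibniz on the three-factor product $\varphi_n\cdot A_{l,p,n}\cdot\apart^l h$ produces, among others, the term $\varphi_n'\cdot A_{l,p,n}\cdot\apart^l h$; after factoring out $\varphi_n$ and the single chain-rule factor $((T^n)')^{-1}$, the correct recursion (Lemma~\ref{lem:form-of-A}) is
\[
A_{l,p+1,n} = \frac{A_{l,p,n}'}{(T^n)'} + \frac{A_{l,p,n}}{(T^n)'}\cdot\frac{\varphi_n'}{\varphi_n} + \frac{A_{l-1,p,n}}{(T^n)'}.
\]
Your middle term $A_{l,p,n}\cdot\apart\!\left(((T^n)')^{-1}\right)$ does not arise: the factor $((T^n)')^{-1}$ appears once from the chain rule and is not itself differentiated at this step. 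You have also omitted the $\varphi_n'/\varphi_n$ contribution from differentiating the weight.

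Second, and more substantively, your plan for the uniform $L^\infty$ bound is too vague to be a proof. The paper's mechanism is a change to image coordinates: setting $B_{l,p,n,\omega} = A_{l,p,n}\circ(T^n|_\omega)^{-1}$ converts the recursion into
\[
B_{l,p+1,n,\omega} = B_{l,p,n,\omega}' + B_{l,p,n,\omega}\cdot\frac{\Phi_n'}{\Phi_n} + B_{l-1,p,n,\omega}\cdot\Theta_n,
\]
with $\Phi_n = \varphi_n\circ(T^n|_\omega)^{-1}$ and $\Theta_n = ((T^n)')^{-1}\circ(T^n|_\omega)^{-1}$. The point of this change is that the inverse branches are uniformly \emph{contracting}, so $\Theta_n$ and $\Phi_n'/\Phi_n$ have $\cC^r$ norms bounded independently of $n$ by the Gou\"ezel--Liverani distortion lemma (Lemma~\ref{lem:distort-crux}) together with Lemma~\ref{lem:diff-quotient}. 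One then gets $\|B_{l,p,n,\omega}\|_{\cC^{r-p}(\sT^n\omega)}\le C_p$ by induction on $p$, hence the $L^\infty$ bound on $A_{l,p,n}$. Your sketch (``telescope into convergent geometric-type sums controlled by $\lambda$ and $\|T''/T'\|_{L^\infty}$'') does not identify this coordinate change; in the original coordinates the recursion contains $A_{l,p,n}'/(T^n)'$, where $A_{l,p,n}'$ can itself grow like $(T^n)'$, so the telescoping you allude to is not transparent without first passing to the $B$'s.
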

\noindent
The above lemma is proved in Section~\ref{sec:distort}.

In two common cases the derivative of \(\varphi\) is zero:
(1) In the case of the transfer operator associated to the measure of maximal entropy;
(2) In the case of the transfer operator associated to the SRB measure for a piecewise affine transformation, here \(\varphi = \frac{1}{\abs{T'}}\) and is piecewise constant.
In both these cases the terms that appear in Lemma~\ref{lem:super-Da} are:
\[
    A_{j,k,n} =
    \begin{cases}
        ((T^{n})^{\prime})^{-k} & \text{if \(j=k\)}  \\
        0         & \text{if \(j<k\)}.
    \end{cases}
\]
We don't use the above in the present work but, if required, it can be verified from the formulae given in Lemma~\ref{lem:form-of-A}.

\begin{lemma}
    \label{lem:use-of-distortion}
    There exists \(C>0\) such that, for all \(n \in \bN_{0}\), 
    \(k \ge  n\), \( 0 \le t \le r\) and \( h \in \mathfrak{B}_{0}\),
    \[
        \abs{ \jump{\apart^{t}(\cL^{n}_{\varphi}h))}{a_{j,k}} }
        \le C \sum_{l=0}^{t}
        \abs{\jump{\cL^{n}_{\varphi}(\apart^{l}h)}{a_{j,k}}}.
    \]
\end{lemma}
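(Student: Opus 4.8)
The plan is to combine the distortion identity of Lemma~\ref{lem:super-Da} with the jump-shift behaviour already established in Lemma~\ref{lem:calc-jump-2}. First I would apply Lemma~\ref{lem:super-Da} with \(p = t\): since \(h\in\fB_0\) and \(0\le t\le r\), we have
\[
    \apart^{t}\left(\cL_{\varphi}^{n}h\right)
    = \sum_{l=0}^{t} \cL_{\varphi}^{n}\left(A_{l,t,n}\cdot \apart^{l}h\right),
\]
where each \(A_{l,t,n}\) is \(\cC^{\infty}\) on every \(\omega\in\Omega_n\) and \(\norm{A_{l,t,n}}_{L^\infty}\le C\). Taking the jump at \(a_{j,k}\) and using linearity of \(\jump{\cdot}{a_{j,k}}\), it suffices to bound each term \(\abs{\jump{\cL_{\varphi}^{n}(A_{l,t,n}\cdot\apart^{l}h)}{a_{j,k}}}\) by \(C\abs{\jump{\cL_{\varphi}^{n}(\apart^{l}h)}{a_{j,k}}}\), which reduces the problem to a single term: show that multiplying the argument of \(\cL_{\varphi}^{n}\) by a function that is \(\cC^{\infty}\) on each element of \(\Omega_n\) and bounded by \(C\) changes the jump of \(\cL_{\varphi}^{n}(\cdot)\) at \(a_{j,k}\) by at most a factor \(C\).

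The key step is to unwind \(\cL_{\varphi}^{n}\) via \eqref{eq:L-iterate} and locate which preimage branch actually contributes to the jump at \(a_{j,k}\). Since \(k\ge n\), Lemma~\ref{lem:delta}, specifically \eqref{eq:good-a}, gives \(\sT^{-n}a_{j,k}\cap\Delta = \{a_{j,k-n}\}\), and Lemma~\ref{lem:disc-of-B} guarantees \(\sT^{-n}a_{j,k}\cap\partial\Omega_n = \emptyset\), so \(a_{j,k}\) is interior to \(\sT^{n}\omega\) for the unique \(\omega\in\Omega_n\) containing \(a_{j,k-n}\). Hence the only source of a jump of \(\cL_{\varphi}^{n}g\) at \(a_{j,k}\) (for \(g = A_{l,t,n}\cdot\apart^{l}h\) or \(g = \apart^{l}h\)) is the discontinuity of \(g\) at \(a_{j,k-n}\), and on a one-sided neighbourhood of \(a_{j,k}\) we have \(\cL_{\varphi}^{n}g = (\varphi_n\cdot g)\circ(\sT^n|_\omega)^{-1}\). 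Writing \(x_\pm\) for the one-sided limits of \((\sT^n|_\omega)^{-1}\) at \(a_{j,k}\) (a single point \(a_{j,k-n}\) since \(A_{l,t,n}\) and \(\varphi_n\) are continuous there, having no discontinuity inside \(\omega\)), the jump becomes
\[
    \jump{\cL_{\varphi}^{n}g}{a_{j,k}}
    = \pm\,\varphi_n(a_{j,k-n})\,A_{l,t,n}(a_{j,k-n})\,\jump{\apart^{l}h}{a_{j,k-n}}
\]
in the \(g = A_{l,t,n}\apart^{l}h\) case, and the same with \(A_{l,t,n}\) omitted in the other case; the sign and the factor \(\varphi_n(a_{j,k-n})\) are common to both. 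Taking absolute values and using \(\abs{A_{l,t,n}(a_{j,k-n})}\le\norm{A_{l,t,n}}_{L^\infty}\le C\) gives exactly the desired bound for each \(l\), and summing over \(0\le l\le t\) (absorbing the factor \(t+1\le r+1\) into \(C\)) completes the proof.

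The main obstacle I anticipate is being careful about the one-sided nature of everything: \(\sT^n|_\omega\) can be orientation-reversing, so one must track that the two one-sided limits of \(\cL_{\varphi}^{n}g\) at \(a_{j,k}\) pull back to the two one-sided limits of \(g\) at \(a_{j,k-n}\) (possibly swapped), and that no \emph{other} \(\omega'\in\Omega_n\) has \(a_{j,k}\) as an endpoint of \(\sT^n\omega'\) contributing a spurious boundary term — this is precisely what Lemma~\ref{lem:disc-of-B} rules out. One also needs \(a_{j,k-n}\) to not lie in \(\partial\Omega_n\) (so that \(\varphi_n\) and \(A_{l,t,n}\) genuinely have two-sided limits there that agree, or at worst contribute a bounded factor), which again follows from Lemma~\ref{lem:disc-of-B} applied with \(k\) replaced by \(k-n\). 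Once these bookkeeping points are settled the estimate is immediate, so I would structure the written proof as: (i) invoke Lemma~\ref{lem:super-Da} to reduce to a single term; (ii) invoke Lemma~\ref{lem:delta} and Lemma~\ref{lem:disc-of-B} to identify the unique contributing branch; (iii) compute the jump explicitly and bound \(A_{l,t,n}\) in \(L^\infty\); (iv) sum over \(l\).
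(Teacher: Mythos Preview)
Your proposal is correct and follows essentially the same route as the paper: invoke Lemma~\ref{lem:super-Da} to expand \(\apart^{t}(\cL_{\varphi}^{n}h)\), use Lemma~\ref{lem:delta} and Lemma~\ref{lem:disc-of-B} to isolate the unique branch \(\omega'\in\Omega_n\) contributing to the jump at \(a_{j,k}\), factor out \(A_{l,t,n}(a_{j,k-n})\) by its continuity there, and bound it by \(\norm{A_{l,t,n}}_{L^\infty}\le C\). The only cosmetic difference is that the paper writes the remaining factor directly as \(\jump{\cL_{\varphi}^{n}(\apart^{l}h)}{a_{j,k}}\) by rerunning the branch identification, whereas you pass through the explicit value \(\pm\varphi_n(a_{j,k-n})\jump{\apart^{l}h}{a_{j,k-n}}\); both are the same computation.
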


\begin{proof}
    By Lemma~\ref{lem:super-Da} there exist functions \(A_{l,t,n}\) (with key properties) such that
    \[
        \jump{\apart^{t}(\cL^{n}_{\varphi}h)}{a_{j,k}}
        =\sum_{l=0}^{t} \jump{\cL_{\varphi}^n(A_{l,t,n} \cdot \apart^{l}h)}{a_{j,k}},
    \]
    and, by definition of \(\cL_\varphi\),
    \[
        \jump{\cL_{\varphi}^n(A_{l,t,n} \cdot \apart^{l}h)}{a_{j,k}}
        = \sum_{\omega\in\Omega_n}\jump{(A_{l,t,n} \cdot \varphi_n\cdot \apart^l h) \circ T^{n}|_{\omega}^{-1}}{a_{j,k}}.
    \]
    Lemma~\ref{lem:delta} implies that for all \(a_{j,k}\), \(k \ge n\), there exists a unique \(\omega^\prime\in\Omega_n\) such that \({T^n|_{\omega^\prime}}^{-1}(a_{j,k})= a_{j,k-n}\in\Delta\)
    and for \(\omega\neq \omega^\prime\), \({T^n|_{\omega}}^{-1}(a_{j,k})\notin\Delta\).  
    Furthermore, 
    \begin{multline*}
        \sum_{\omega\in\Omega_n}\jump{{(A_{l,t,n} \cdot \varphi_n\cdot \apart^l h) \circ {T^{n}|_{\omega}}^{-1}}}{a_{j,k}}\\
        = A_{l,t,n}(a_{j,k-n}) \jump{(\varphi_n\cdot \apart^l h)\circ {T^n|_{\omega^\prime}}^{-1}}{a_{j,k}}.
    \end{multline*}
    This is because \(A_{l,t,n} \cdot \varphi_{n}\) is continuous at any point in
    \(T^{-n}(a_{j,k})\), \(k \ge n\) (Lemma~\ref{lem:disc-of-B} \& Lemma~\ref{lem:super-Da}), and, \(\apart^{l}h\) is discontinuous only in \(\Delta\) \eqref{eq:good-B}.
    Similarly,
    \[
        \begin{aligned}
            \jump{(\varphi_n\cdot \apart^l h)\circ {T^n|_{\omega^\prime}}^{-1}}{a_{j,k}}
             & = \sum_{\omega\in\Omega_n}\jump{{(\varphi_n\cdot \apart^l h) \circ {T^{n}|_{\omega}}^{-1}}}{a_{j,k}} \\
             & = \jump{\cL^{n}_{\varphi}(\apart^{l}h)}{a_{j,k}}.
        \end{aligned}
    \]
    Collecting together the above equalities we have shown that
    \[
        \abs{\jump{\cL_{\varphi}^n(A_{l,t,n} \cdot \apart^{l}h)}{a_{j,k}}}
        \leq 
        \norm{A_{l,t,n}}_{L^\infty}
        \abs{\jump{\cL^{n}_{\varphi}(\apart^{l}h)}{a_{j,k}}}.
    \]
    The statement of the lemma follows since Lemma~\ref{lem:super-Da} tells that \(\norm{A_{l,t,n}}_{L^\infty}\) is bounded uniformly in \(n\).
\end{proof}

For convenience we use the following notation for the continuous part and jump part of the norm, respectively,
\[
    \norm{h}_{\fC^r}
    = \sum_{j=0}^{r}{\|\apart^{j} h\|}_{L^{1}},
    \quad \quad
    \norm{h}_{\fJ_\zeta^r}
    = \sum_{j=0}^{r-1}{\|\apart^{j} h\|}_{\fJ_\zeta}.
\]
By change of variables,
since \(\norm{\varphi \cdot \sT^\prime}_{L^\infty}\) is finite,
for all \(h\in L^1\), \(n\in \bN_0\),
\begin{equation}
    \label{eq:L^{1}-contraction}
    \norm{\cL_{\varphi}h}_{L^1} \leq \norm{\varphi_n \cdot \smash{\left(\sT^n\right)^\prime} \cdot h}_{L^1}.
\end{equation}
\begin{lemma}
    \label{lem:boundedness}
    There exists some \(C > 0\) such that, for all \(h \in \fB_0\),
    \[
        {\|\cL_{\varphi} h\|}_{\fB_{\zeta,r}} \le C {\|h\|}_{\fB_{\zeta,r}}.
    \]
    In particular, \(\cL_{\varphi}\) extends to a continuous operator on \(\mathfrak{B}_{\zeta,r}\).
\end{lemma}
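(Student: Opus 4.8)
The plan is to prove this Lasota-Yorke-type bound directly on the dense subspace \(\fB_0\); for mere boundedness the constants need not be small, so it suffices to keep every quantity finite. I would split the norm \eqref{eq:def-custom} as \(\norm{\cL_\varphi h}_{\fB_{\zeta,r}} = \norm{\cL_\varphi h}_{\fC^r} + \norm{\cL_\varphi h}_{\fJ_\zeta^r}\) and bound each summand by \(C\norm{h}_{\fB_{\zeta,r}}\). Throughout, since \(\apart\) does not commute with \(\cL_\varphi\), the distortion results Lemma~\ref{lem:super-Da} and Lemma~\ref{lem:use-of-distortion}, applied with \(n=1\), are the devices that let me move derivatives past the transfer operator; once the inequality holds on \(\fB_0\) the extension to a continuous operator on \(\fB_{\zeta,r}\) is automatic by density.

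For the continuous part, Lemma~\ref{lem:super-Da} with \(n=1\) gives, for each \(0\le t\le r\), the identity \(\apart^t(\cL_\varphi h) = \sum_{l=0}^t \cL_\varphi(A_{l,t,1}\cdot\apart^l h)\) with \(\norm{A_{l,t,1}}_{L^\infty}\le C\). Applying the \(L^1\)-contraction estimate \eqref{eq:L^{1}-contraction} with \(n=1\), together with \(\norm{\varphi\cdot\sT'}_{L^\infty}<\infty\), one gets \(\norm{\cL_\varphi(A_{l,t,1}\apart^l h)}_{L^1}\le \norm{\varphi\cdot\sT'}_{L^\infty}\norm{A_{l,t,1}}_{L^\infty}\norm{\apart^l h}_{L^1}\), and summing over \(l\le t\) and then over \(t\le r\) yields \(\norm{\cL_\varphi h}_{\fC^r}\le C\norm{h}_{\fC^r}\).

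For the jump part I would first treat the orbit tails. Fix \(0\le t\le r-1\) and \(1\le j\le N_1\). For \(k\ge 1\), Lemma~\ref{lem:use-of-distortion} (with \(n=1\)) bounds \(\abs{\jump{\apart^t(\cL_\varphi h)}{a_{j,k}}}\) by \(C\sum_{l=0}^t \abs{\jump{\cL_\varphi(\apart^l h)}{a_{j,k}}}\), and Lemma~\ref{lem:calc-jump-2} rewrites each term here as \(\abs{\varphi(a_{j,k-1})}\,\abs{\jump{\apart^l h}{a_{j,k-1}}}\). The definition \eqref{eq:def-zeta} of \(\zeta\) is designed exactly so that \(\zeta(j,k)\,\abs{\varphi(a_{j,k-1})} = \widetilde{\Lambda}\,\zeta(j,k-1)\); hence the weighted jumps of \(\cL_\varphi h\) along the orbit are, up to the factor \(C\widetilde{\Lambda}\) and a sum over \(l\le t\), a shifted copy of the weighted jumps of \(h\). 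Summing over \(k\ge 1\) (reindexing \(k\mapsto k-1\)), then over \(j\) and \(t\), bounds this contribution by \(C\norm{h}_{\fJ_\zeta^r}\). What remains are the finitely many ``exceptional'' points \(y\in\{a_{j,0}\}_j\cup\{b_k\}_k\), where the shift identity fails because \(\sT^{-1}y\) need not meet \(\Delta\) in a single point and may contain endpoints of the intervals \(\sT\omega\). Here I would again invoke Lemma~\ref{lem:super-Da} to write \(\apart^t(\cL_\varphi h)=\sum_{l\le t}\cL_\varphi(A_{l,t,1}\apart^l h)\); since \(A_{l,t,1}\cdot\varphi\) is \(\cC^\infty\) on each \(\omega\in\Omega\), the jump of \(\cL_\varphi(A_{l,t,1}\apart^l h)\) at \(y\) is a finite combination, one term per \(\omega\in\Omega\), of one-sided limits and jumps of \(\apart^l h\) at preimages of \(y\), each bounded by \(C\norm{\apart^l h}_{L^\infty}\). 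Since \(l\le t\le r-1\), so \(l+1\le r\), Lemma~\ref{lem:BV} and the elementary bound \(\norm{g}_{L^\infty}\le\norm{g}_{\BV}\) bound this by \(C\norm{h}_{\fB_{\zeta,r}}\), and there are only finitely many \(y\), \(t\), \(l\) to sum. Collecting the three estimates gives \(\norm{\cL_\varphi h}_{\fB_{\zeta,r}}\le C\norm{h}_{\fB_{\zeta,r}}\) on \(\fB_0\), whence the stated continuous extension.

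The hard part is the jump part: one must arrange that the weights \(\zeta(j,k)\) make \(\cL_\varphi\) act on jumps like a weighted shift (which pins down the choice \eqref{eq:def-zeta}), and then absorb the finitely many exceptional jump points through the \(\BV\)-embedding of Lemma~\ref{lem:BV}; by contrast the continuous part and the general scheme of commuting \(\apart\) past \(\cL_\varphi\) via Lemma~\ref{lem:super-Da} are routine.
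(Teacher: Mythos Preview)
Your proposal is correct and follows essentially the same approach as the paper: the same split into continuous and jump parts, the same use of Lemma~\ref{lem:super-Da} and~\eqref{eq:L^{1}-contraction} for the continuous part, the same tail/exceptional-point decomposition of the jump part handled respectively via Lemma~\ref{lem:use-of-distortion} plus Lemma~\ref{lem:calc-jump-2} and the weight identity \(\zeta(j,k)\abs{\varphi(a_{j,k-1})}=\widetilde{\Lambda}\,\zeta(j,k-1)\), and via the \(\BV\)-embedding of Lemma~\ref{lem:BV} for the finitely many exceptional points.
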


\begin{proof}
    We estimate
    \({\|\cL_{\varphi}h\|}_{\fB_{\zeta,r}}
    = \sum_{l=0}^{r}{\|\apart^{l}\cL_{\varphi}h\|}_{L^{1}}
    + \sum_{l=0}^{r-1}{\|\apart^{l}\cL_{\varphi}h\|}_{\fJ_\zeta}\).
    For any \(0 \le l \le r\), using Lemma~\ref{lem:super-Da} (for \(n=1\)) and \eqref{eq:L^{1}-contraction},
    \[
        \begin{aligned}
            {\|\apart^{l}\cL_{\varphi}h\|}_{L^{1}}
             & \le \sum_{s=0}^{l} {\|\cL_{\varphi}(A_{s,l,1}\apart^{s}h)\|}_{L^{1}}        \\
             & \le  \sum_{s=0}^{l} {\| \varphi \cdot \sT' \cdot  A_{s,l,1} \cdot \apart^{s}h\|}_{L^{1}}
            \le C \sum_{s=0}^{l} {\|\apart^{s}h\|}_{L^{1}},
        \end{aligned}
    \]
    for some \(C>0\).
    This takes care of the continuous part of the norm.
    As for the jump part, we have, for any \(0 \le l \le r-1\)
    \[
        \begin{aligned} 
            {\|\apart^{l}\cL_{\varphi}h\|}_{\fJ_\zeta}
            &= \sum_{j=1}^{N_1} \sum_{k=1}^{\infty} \zeta(j,k) \abs{\jump{\apart^{l}(\cL_{\varphi}h)}{a_{j,k}}}\\
            & \ \ + \left( \sum_{k=1}^{N_2} \abs{\jump{\apart^{l}\cL_{\varphi}h}{b_k}}
            + \sum_{j=1}^{N_1} \abs{\jump{\apart^{l}(\cL_{\varphi}h)}{a_{j,0}}} \right).
        \end{aligned}
    \]
    Let us start with the final pair of sums.
    Crucially this is a finite number of terms.
    By Lemma~\ref{lem:BV} and Lemma~\ref{lem:super-Da}, we have that, for some \(C>0\),
    \begin{equation}
        \label{eq:finite-sum}
        \begin{aligned}
            \abs{\jump{\apart^{l}\cL_{\varphi}h}{b_k}}
             & \le \sum_{s=0}^{l}
             \abs{\jump{\cL_{\varphi}(A_{s,l,1} \cdot \apart^{s}h)}{b_k}}  \\
             & \le 2  \ \# \Omega \sum_{s=0}^{l} {\|\varphi \cdot  A_{s,l,1} \cdot \apart^{s}h\|}_{L^{\infty}}
            \le C {\|h\|}_{\fB_{\zeta,r}}.
        \end{aligned}
    \end{equation}
    An identical estimate holds for the terms involving \(\abs{\jump{\apart^{l}\cL_{\varphi}h}{a_{j,0}}}\).
    This takes care of the finite sum.

    For the first sum, the remaining term to be estimated, Lemma~\ref{lem:use-of-distortion} implies, 
    \[
        \abs{\jump{\apart^{l}(\cL_{\varphi}h)}{a_{j,k}}} \le C \sum_{s=0}^{l}  \abs{\jump{\cL_\varphi(\apart^{s}h)}{a_{j,k}}}.
    \]
    Lemma~\ref{lem:calc-jump-2} implies that this is bounded by
    \( C \sum_{s=0}^{l}  \abs{\jump{\apart^{s}h}{a_{j,k-1}}}\).
    Summing over \(k\), this means that
    \[
        \sum_{j=1}^{N_1} \sum_{k=0}^{\infty} \zeta(j,k) \abs{\jump{\apart^{l}(\cL_{\varphi}h)}{a_{j,k}}}
        \leq C \norm{h}_{\fJ_\zeta^r},
    \]
    as required.
\end{proof}

Subsequently we will prove inequalities concerning \(\cL_{\varphi}\) and Lemma~\ref{lem:boundedness} means that it is sufficient to prove them on \(\fB_{0}\).

\begin{lemma}
    \label{lem:est-c-part}
    For each \(n\in  \bN\), there exists \(C_n >0\) such that,
    if \(h \in \fB_{r,\zeta}\),
    \[
        \norm{\cL_{\varphi}^n h}_{\fC^r}
        \le C_n \norm{h}_{\fC^{r-1}} +
        \norm{\varphi_n}_{L^\infty}
        \norm{{1}/{(T^n)'}}_{L^\infty}^{r-1}
        \norm{h}_{\fC^r}.
    \]
\end{lemma}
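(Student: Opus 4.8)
The plan is to estimate the continuous part of the norm of $\cL_\varphi^n h$, namely $\norm{\cL_\varphi^n h}_{\fC^r} = \sum_{t=0}^r \norm{\apart^t(\cL_\varphi^n h)}_{L^1}$, by splitting off the top-order term $t = r$ from the lower-order terms $t \le r-1$. First I would apply Lemma~\ref{lem:super-Da} to each $\apart^t(\cL_\varphi^n h)$, writing $\apart^t(\cL_\varphi^n h) = \sum_{l=0}^t \cL_\varphi^n(A_{l,t,n}\cdot \apart^l h)$, so that each term is of the form $\cL_\varphi^n$ applied to a bounded function times a derivative of $h$ of order at most $t$. Then I would use the $L^1$-contraction estimate \eqref{eq:L^{1}-contraction}, which gives $\norm{\cL_\varphi^n(A_{l,t,n}\cdot \apart^l h)}_{L^1} \le \norm{\varphi_n \cdot (\sT^n)' \cdot A_{l,t,n}\cdot \apart^l h}_{L^1}$.

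The key observation for obtaining the sharp coefficient on the top-order term is that $A_{r,r,n} = ((\sT^n)')^{-r}$ exactly (Lemma~\ref{lem:super-Da}), so the $t=r$, $l=r$ term contributes $\norm{\varphi_n \cdot (\sT^n)' \cdot ((\sT^n)')^{-r} \cdot \apart^r h}_{L^1} = \norm{\varphi_n \cdot ((\sT^n)')^{-(r-1)} \cdot \apart^r h}_{L^1} \le \norm{\varphi_n}_{L^\infty} \norm{1/(\sT^n)'}_{L^\infty}^{r-1} \norm{\apart^r h}_{L^1}$, which is precisely the second term on the right-hand side of the claimed inequality (after noting $\norm{\apart^r h}_{L^1} \le \norm{h}_{\fC^r}$). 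Every other term arising in the double sum — that is, all terms with $t \le r-1$ (for any $l \le t$), together with the terms $t = r$, $l \le r-1$ — involves only $\apart^l h$ with $l \le r-1$, hence is controlled by $\norm{h}_{\fC^{r-1}}$. For these remaining terms I would simply bound $\norm{A_{l,t,n}}_{L^\infty} \le C$ (uniform in $n$ by Lemma~\ref{lem:super-Da}) and $\norm{\varphi_n \cdot (\sT^n)'}_{L^\infty} < \infty$ (this quantity depends on $n$, which is why the constant $C_n$ is allowed to depend on $n$), collecting everything into a single constant $C_n$.

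The main subtlety — though it is not really an obstacle, more a point requiring care — is isolating exactly the top-order term with its exact coefficient rather than absorbing it into $C_n$ with a crude $\norm{A_{r,r,n}}_{L^\infty} \le C$ bound; one must use the identity $A_{r,r,n} = ((\sT^n)')^{-r}$ and cancel one power of $(\sT^n)'$ against the Jacobian factor coming from \eqref{eq:L^{1}-contraction}. Everything else is a routine application of the triangle inequality, the uniform bounds from Lemma~\ref{lem:super-Da}, and the elementary inequalities $\norm{\apart^l h}_{L^1} \le \norm{h}_{\fC^{l}} \le \norm{h}_{\fC^{r-1}}$ for $l \le r-1$. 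Since by density (Lemma~\ref{lem:boundedness}) it suffices to prove the inequality for $h \in \fB_0$, where all the formulas of Lemma~\ref{lem:super-Da} are literally valid, no approximation argument is needed beyond invoking that density.
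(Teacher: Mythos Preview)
Your proposal is correct and follows essentially the same route as the paper: reduce to \(\fB_0\) by density, expand \(\apart^t(\cL_\varphi^n h)\) via Lemma~\ref{lem:super-Da}, apply the \(L^1\) bound~\eqref{eq:L^{1}-contraction}, and isolate the top term \(t=l=r\) using \(A_{r,r,n} = ((\sT^n)')^{-r}\) while absorbing all remaining terms (which involve only \(\apart^l h\) with \(l\le r-1\)) into \(C_n\norm{h}_{\fC^{r-1}}\). The paper's proof is organized identically, with the same splitting and the same use of \(\norm{\varphi_n\cdot(\sT^n)'\cdot A_{l,q,n}}_{L^\infty}\) for the lower-order constant.
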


\begin{proof}
    By density it suffices to prove the statement for \(h\in \fB_0\).
    Using Lemma~\ref{lem:super-Da}, and \eqref{eq:L^{1}-contraction}, one has that, for \(0 \le  q \le r-1\),
    \begin{equation*}
        \begin{aligned}
            \norm{\apart^{q}{(\cL^{n}_{\varphi} h)}}_{L^{1}}
             & \le
            \sum_{l = 0}^{q}\norm{\cL^{n}_{\varphi} \cdot (A_{l,q,n} \cdot \apart^{l}{h})}_{L^{1}}
            \le
            \sum_{l = 0}^{q}\norm{ \varphi_{n} (T^{n})^{\prime} \cdot A_{l,q,n}
            \cdot \apart^{l}{h}}_{L^{1}}                                                    \\
             & \le \sum_{l= 0}^{q}  \norm{\varphi_{n}\cdot (T^{n})^{\prime} \cdot A_{l,q,n}}_{L^{\infty}}
            \norm{\smash{\apart^{l}{h}}}_{L^{1}}.
        \end{aligned}
    \end{equation*}
    Lemma~\ref{lem:super-Da} and the assumption \(\norm{\varphi \cdot T^{\prime}}_{L^{\infty}}< \infty\) implies that 
    \[
        C_n 
        = \max_{l\le q \le r}\norm{\varphi_n\cdot (T^{n})^{\prime} \cdot A_{l,q,n}}_{L^{\infty}}
    \] 
    is finite.
    This means that, 
    \(\norm{\apart^{q}{(\cL^{n}_{\varphi} h})}_{L^{1}} \le C_n \norm{h}_{\fC^q}\)
    and, in particular,
    \[
        \norm{\cL_{\varphi} h}_{\fC^{r-1}} \le r C_n \norm{h}_{\fC^{r-1}}.
    \]
    As for the top term, using
    \(A_{r,r,n} =  {((\sT^{n})^{\prime})^{-r}}\)
    (Lemma~\ref{lem:super-Da}),  
    \begin{equation*}
        \begin{split}
            \norm{\apart^{r}{(\cL_{\varphi} h)}}_{L^{1}}
            &\le
            \norm{\varphi_n\cdot (T^{n})^{\prime} \cdot A_{r,r,n}}_{L^{\infty}}
            \norm{\smash{\apart^{r}{h}}}_{L^{1}}\\
            & \ \ +
            \sum_{l= 0}^{r-1}  \norm{\varphi_n\cdot (T^{n})^{\prime} \cdot A_{l,r,n}}_{L^{\infty}}
            \norm{\smash{\apart^{l}{h}}}_{L^{1}}\\
            &\le
            \norm{\smash{{\varphi_n}\cdot {((T^{n})^{\prime})^{-(r-1)}}}}_{L^{\infty}}
            \norm{h}_{\fC^r}
            +
            r C_n \norm{h}_{\fC^{r-1}}.
        \end{split}
    \end{equation*}
    The statement follows by summing the above estimates.
\end{proof}

\begin{lemma}
    \label{lem:continuity}
    Let \(n\in \bN\).
    There exists \(C_n > 0\) such that,
    for all \(h \in \fB_0\), \(x\in [0,1]\) and \(0\le t \le r-1\),
    \[
        |J(\apart^{t}(\cL^{n}_{\varphi}h),x)|
        \le C_n {\|h\|}_{\fB_{\zeta,r}}.
    \]
\end{lemma}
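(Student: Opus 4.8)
The plan is to control the jump of \(\apart^{t}(\cL_{\varphi}^{n}h)\) at an \emph{arbitrary} point by a finite sum of one-sided pointwise values, each of which is dominated by \(\norm{h}_{\fB_{\zeta,r}}\) via the \(\BV\) estimate of Lemma~\ref{lem:BV}. The key point is that, in contrast with the Lasota--Yorke inequalities elsewhere in this section, here we only need a bound depending on \(n\) and no smallness, so we may freely lose the finite, \(n\)-dependent factor \(\#\Omega_{n}\) coming from the number of branches of \(\sT^{n}\). This is the \(n\)-step generalisation of the \(n=1\) estimate \eqref{eq:finite-sum} used inside Lemma~\ref{lem:boundedness}.

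First I would apply Lemma~\ref{lem:super-Da} with \(p=t\) to write \(\apart^{t}(\cL_{\varphi}^{n}h)=\sum_{l=0}^{t}\cL_{\varphi}^{n}(A_{l,t,n}\cdot\apart^{l}h)\), so that by the triangle inequality for \(\jump{\cdot}{\cdot}\) it suffices to bound \(\abs{\jump{\cL_{\varphi}^{n}(A_{l,t,n}\cdot\apart^{l}h)}{x}}\) for each \(0\le l\le t\) and each \(x\in[0,1]\). Set \(g_{l}=A_{l,t,n}\cdot\apart^{l}h\). Since \(A_{l,t,n}\) is \(\cC^{\infty}\) on each \(\omega\in\Omega_{n}\) with \(\norm{A_{l,t,n}}_{L^{\infty}}\le C\) (Lemma~\ref{lem:super-Da}) and \(\apart^{l}h\in\fB_{0}\) by \eqref{eq:good-B}, hence piecewise \(\cC^{\infty}\) and so bounded with finite one-sided limits at every point, the product \(g_{l}\) has the same properties and \(\norm{g_{l}}_{L^{\infty}}\le C\norm{\apart^{l}h}_{L^{\infty}}\). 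Because \(l\le t\le r-1\), Lemma~\ref{lem:BV} gives \(\norm{\apart^{l}h}_{L^{\infty}}\le\norm{\apart^{l}h}_{\BV}\le C\norm{h}_{\fB_{\zeta,r}}\), hence \(\norm{g_{l}}_{L^{\infty}}\le C\norm{h}_{\fB_{\zeta,r}}\).

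Next I would expand, using \eqref{eq:L-iterate}, \(\cL_{\varphi}^{n}g_{l}=\sum_{\omega\in\Omega_{n}}(\varphi_{n}\cdot g_{l})\circ(\sT^{n}|_{\omega})^{-1}\cdot\characteristic{\sT^{n}\omega}\), and estimate the jump at \(x\) by summing the contributions of the finitely many branches \(\omega\in\Omega_{n}\). A summand contributes nothing if \(x\notin\overline{\sT^{n}\omega}\); if \(x\) lies in the interior of \(\sT^{n}\omega\) its jump has modulus at most \(2\norm{\varphi_{n}}_{L^{\infty}}\norm{g_{l}}_{L^{\infty}}\), since \(\varphi_{n}\) is continuous on \(\omega\) and \((\sT^{n}|_{\omega})^{-1}\) is a homeomorphism, so the jump equals that of \(\varphi_{n}\cdot g_{l}\) at the corresponding interior preimage; and if \(x\in\partial(\sT^{n}\omega)\) the indicator switches on or off and the contribution is at most \(\norm{\varphi_{n}}_{L^{\infty}}\norm{g_{l}}_{L^{\infty}}\), a one-sided limit of \(\varphi_{n}\cdot g_{l}\) taken from within \(\omega\). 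Here I use that \(\varphi_{n}=\prod_{k=0}^{n-1}\varphi\circ\sT^{k}\) is bounded, \(\varphi\) being bounded, and that \(\norm{g_{l}}_{L^{\infty}}\) dominates all one-sided limits of \(g_{l}\). Summing over the at most \((\#\Omega)^{n}\) branches, over \(0\le l\le t\le r\), and collecting \(\norm{\varphi_{n}}_{L^{\infty}}\), \(\#\Omega_{n}\) and the constants from Lemmas~\ref{lem:super-Da} and~\ref{lem:BV} into a single \(C_{n}>0\) — depending on \(n\) but not on \(x\), \(h\) or \(t\) — yields \(\abs{\jump{\apart^{t}(\cL_{\varphi}^{n}h)}{x}}\le C_{n}\norm{h}_{\fB_{\zeta,r}}\).

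I do not expect a genuine obstacle, since the whole argument is a finite sum of at most \((\#\Omega)^{n}(r+1)\) one-sided values of a bounded function. The only point deserving care is the jump bookkeeping for \((\varphi_{n}\cdot A_{l,t,n}\cdot\apart^{l}h)\circ(\sT^{n}|_{\omega})^{-1}\cdot\characteristic{\sT^{n}\omega}\): one must remember that a jump at \(x\) can be produced either by a discontinuity of \(\apart^{l}h\) at the preimage or simply by \(x\) being an endpoint of \(\sT^{n}\omega\) (where the indicator creates the jump), and that orientation reversal of a branch merely swaps the two one-sided limits without changing the modulus.
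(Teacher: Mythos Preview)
Your argument is correct, but it is considerably more laborious than the paper's. The paper's proof is essentially one line: since \(t\le r-1\), Lemma~\ref{lem:BV} applied directly to \(\cL_{\varphi}^{n}h\) gives \(\abs{\jump{\apart^{t}(\cL_{\varphi}^{n}h)}{x}}\le 2\norm{\apart^{t}(\cL_{\varphi}^{n}h)}_{L^{\infty}}\le 2C\norm{\cL_{\varphi}^{n}h}_{\fB_{\zeta,r}}\), and then Lemma~\ref{lem:boundedness} (iterated \(n\) times) yields \(\norm{\cL_{\varphi}^{n}h}_{\fB_{\zeta,r}}\le C^{n}\norm{h}_{\fB_{\zeta,r}}\). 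In other words, the paper treats \(\cL_{\varphi}^{n}h\) as a single element of \(\fB_{\zeta,r}\) and never opens up the transfer operator again. Your approach instead re-expands \(\cL_{\varphi}^{n}\) via \eqref{eq:L-iterate} and Lemma~\ref{lem:super-Da}, reproducing by hand the \(n\)-step analogue of the estimate \eqref{eq:finite-sum} that already went into proving Lemma~\ref{lem:boundedness}. This buys you nothing extra here (you explicitly acknowledge that no smallness is needed), and costs the branch-counting and endpoint bookkeeping; the paper's route is shorter precisely because the relevant work has already been packaged into Lemma~\ref{lem:boundedness}.
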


\begin{proof}
    By Lemma~\ref{lem:BV} and Lemma~\ref{lem:boundedness}, 
    for all \(h \in \fB_0\) and \(x\in [0,1]\),
    \[
        |J(\apart^{t}(\cL^{n}_{\varphi}h),x)| 
        \le 2 {\|\apart^{t}(\cL^{n}_{\varphi}h)\|}_{L^{\infty}} 
        \le 2 C {\|\cL^{n}_{\varphi}h\|}_{\fB_{\zeta,r}} \le 2 {C} C^n {\|h\|}_{\fB_{\zeta,r}}. \qedhere
    \]
\end{proof}

\begin{lemma}
    \label{lem:est-j-part}
    There exists \(C>0\) and, for each \(n \in \mathbb{N}\), there exists a finite set of linear functionals \(\{\rho_{l}\}_{l\in Q_{n}}\) (on \(\fB_{r,\eta}\)) such that, for all \(h \in \fB_{r,\eta}\),
    \[
        \norm{\cL_{\varphi}^{n} h}_{\fJ_\zeta^r}
        \le
        \sum_{l \in Q_{n}}|\rho_{l}(h)|
        + C \tilde\Lambda^{n} \norm{h}_{\fJ_\zeta^r}.
    \]
\end{lemma}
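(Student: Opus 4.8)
The plan is to prove the bound for $h\in\fB_0$; it then extends to all of $\fB_{\zeta,r}$ by density, using that $\cL_\varphi$ is bounded on $\fB_{\zeta,r}$ (Lemma~\ref{lem:boundedness}), that $\norm{\cdot}_{\fJ_\zeta^r}$ is continuous, and that every functional $\rho_l$ constructed below is continuous on $\fB_{\zeta,r}$ (Lemma~\ref{lem:continuity}). Unwinding the definitions of $\norm{\cdot}_{\fJ_\zeta^r}$ and $\norm{\cdot}_{\fJ_\zeta}$,
\[
    \norm{\cL_\varphi^n h}_{\fJ_\zeta^r}
    = \sum_{t=0}^{r-1}\sum_{j=1}^{N_1}\sum_{k=0}^{\infty}\zeta(j,k)\,\abs{\jump{\apart^t(\cL_\varphi^n h)}{a_{j,k}}}
    + \sum_{t=0}^{r-1}\sum_{k=1}^{N_2}\abs{\jump{\apart^t(\cL_\varphi^n h)}{b_k}}.
\]
I would split the innermost sum at $k=n$. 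The terms with $0\le k\le n-1$, together with all the $b_k$-terms, form a finite family of cardinality $r(nN_1+N_2)$; by Lemma~\ref{lem:continuity} each of the maps $h\mapsto \zeta(j,k)\jump{\apart^t(\cL_\varphi^n h)}{a_{j,k}}$ and $h\mapsto \jump{\apart^t(\cL_\varphi^n h)}{b_k}$ is a bounded linear functional on $\fB_{\zeta,r}$, and I take this finite family to be $\{\rho_l\}_{l\in Q_n}$, so that this part is exactly $\sum_{l\in Q_n}\abs{\rho_l(h)}$.

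It remains to bound the tail $\sum_{t=0}^{r-1}\sum_{j=1}^{N_1}\sum_{k=n}^{\infty}\zeta(j,k)\,\abs{\jump{\apart^t(\cL_\varphi^n h)}{a_{j,k}}}$ by $C\widetilde{\Lambda}^n\norm{h}_{\fJ_\zeta^r}$ with $C$ independent of $n$. For $k\ge n$, Lemma~\ref{lem:use-of-distortion} gives (with $C$ uniform in $n$)
\[
    \abs{\jump{\apart^t(\cL_\varphi^n h)}{a_{j,k}}}
    \le C\sum_{l=0}^{t}\abs{\jump{\cL_\varphi^n(\apart^l h)}{a_{j,k}}},
\]
and, since $\apart^l h\in\fB_0$ and $\fB_0$ is invariant under $\apart$ and $\cL_\varphi$ by~\eqref{eq:good-B}, iterating Lemma~\ref{lem:calc-jump-2} $n$ times along the orbit yields $\abs{\jump{\cL_\varphi^n(\apart^l h)}{a_{j,k}}} = \abs{\varphi_n(a_{j,k-n})}\,\abs{\jump{\apart^l h}{a_{j,k-n}}}$. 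Reindexing by $m=k-n$, the tail becomes a finite sum, over $0\le l\le t\le r-1$ and $1\le j\le N_1$, of the quantities $\sum_{m=0}^{\infty}\zeta(j,m+n)\,\abs{\varphi_n(a_{j,m})}\,\abs{\jump{\apart^l h}{a_{j,m}}}$.

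The crux is the identity $\zeta(j,m+n)\,\abs{\varphi_n(a_{j,m})}=\widetilde{\Lambda}^n\,\zeta(j,m)$, which is exactly why $\zeta$ was defined as in~\eqref{eq:def-zeta}: writing $\zeta(j,k)=\widetilde{\Lambda}^{k}/\abs{\varphi_k(a_{j,0})}$ and using the cocycle relation $\varphi_{m+n}(a_{j,0})=\varphi_n(a_{j,m})\,\varphi_m(a_{j,0})$ (valid because $T^i a_{j,0}=a_{j,i}$ along the orbit, by Lemma~\ref{lem:delta}), the factors of $\varphi$ cancel. Hence each quantity above equals $\widetilde{\Lambda}^n\sum_{m=0}^{\infty}\zeta(j,m)\,\abs{\jump{\apart^l h}{a_{j,m}}}\le\widetilde{\Lambda}^n\norm{\apart^l h}_{\fJ_\zeta}$, and summing over $j$, $l$ and $t$ (each $\apart^l h$ occurring in at most $r$ of the $t$-summands) bounds the tail by $Cr\widetilde{\Lambda}^n\sum_{l=0}^{r-1}\norm{\apart^l h}_{\fJ_\zeta}=Cr\widetilde{\Lambda}^n\norm{h}_{\fJ_\zeta^r}$; absorbing the fixed constant $r$ into $C$ finishes the proof. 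The main obstacle is recognizing and verifying this cancellation — the whole point of the weights $\zeta$ is that the distortion-corrected jump along each infinite discontinuity orbit is transported by $\cL_\varphi^n$ with weight precisely $\widetilde{\Lambda}^n$; once that is in place, isolating the finitely many initial jumps and the $b_k$ as the functionals $\rho_l$ via Lemma~\ref{lem:continuity} is routine bookkeeping.
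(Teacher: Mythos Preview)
Your proof is correct and follows essentially the same approach as the paper: split the $\fJ_\zeta$-sum at $k=n$, package the finitely many initial terms and the $b_k$-terms as bounded linear functionals via Lemma~\ref{lem:continuity}, and for the tail combine Lemma~\ref{lem:use-of-distortion} with the iterated Lemma~\ref{lem:calc-jump-2} and the weight identity $\zeta(j,k)\abs{\varphi_n(a_{j,k-n})}=\widetilde{\Lambda}^n\zeta(j,k-n)$. Your explicit verification of this identity via the cocycle relation for $\varphi_n$ is a welcome clarification that the paper leaves implicit.
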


\begin{proof}
    By density it suffices to consider \(h \in \fB_0\).
    For all \(0\leq t \leq r-1\), \(n\in \bN_0\), using only the definition of \(\norm{\cdot}_{\fJ_\zeta}\),  
    \begin{equation}
        \label{eq:zeta-norm-higher}
        \begin{split}
            {\|\apart^{t}(\cL^{n}_ {\varphi}h)\|}_{\fJ_\zeta}
            &= \sum_{k=1}^{N_{2}}|J(\apart^{t}(\cL^{n}_{\varphi}h),b_{k})| \\
            & \ \ + \sum_{j=1}^{N_{1}}\sum_{k=0}^{n-1}\zeta(j,k)|J(\apart^{t}(\cL^{n}_{\varphi}h),a_{j,k})| \\
            & \ \ + \sum_{j=1}^{N_{1}}\sum_{k=n}^{\infty}\zeta(j,k) \abs{J(\apart^{t}(\cL_{\varphi}^n h)),a_{j,k})}.
        \end{split}
    \end{equation}
    For each \(n,t,j,k\), let
    \[
        \begin{aligned}
            \widetilde{\rho}_{n,t,k}(h) 
            &= J(\apart^{t}(\cL^{n}_{\varphi}h),b_{k}), \\
            \rho_{n,t,j,k}(h) 
            &= \zeta(j,k)(J(\apart^{t}(\cL^{n}_{\varphi}h),a_{j,k})).
        \end{aligned}
    \]
    Lemma~\ref{lem:continuity} implies that these are all linear functionals on \(\fB_{\zeta,r}\).
    This accounts for the term \(\sum_{l \in Q_{n}}|\rho_{l}(h)|\) in the statement of the lemma.
    It remains to estimate the final term of \eqref{eq:zeta-norm-higher}.
    We notice that, applying Lemma~\ref{lem:calc-jump-2} to \(\apart^{t}h\), for \(k\geq 1\), \(0\leq j \leq N_1\),
    \[
        \abs{\jump{\cL_{\varphi} (\apart^{t}h)}{a_{j,k}}} =  \abs{\varphi(a_{j,k-1})} \ \abs{\jump{\apart^{t}h}{a_{j,k-1}}}.
    \]
    Iterating this we obtain that, whenever \(k \geq n\),
    \begin{equation}
        \abs{\jump{\cL^n_{\varphi} (\apart^{t}h)}{a_{j,k}}} =  \abs{\varphi_n(a_{j,k-n})} \ \abs{\jump{\apart^{t}h}{a_{j,k-n}}}.
    \end{equation}
    Using also Lemma~\ref{lem:use-of-distortion},
    \[
        \begin{aligned}
            \abs{\jump{\apart^{t}(\cL_{\varphi}^{n}h)}{a_{j,k}}}
             & \le C \sum_{i=0}^{t} \abs{\jump{\cL_{\varphi}^{n}(\apart^{i}h)}{a_{j,k}}} \\
             & = C \abs{\varphi_n(a_{j,k-n})} \sum_{i=0}^{t}   \abs{\jump{\apart^{i}h}{a_{j,k-n}}}.  
        \end{aligned}
    \]    
    It follows by definition \eqref{eq:def-zeta} that
    \( \zeta(j,k) \abs{\varphi_n(a_{j,k-n})} = \tilde{\Lambda}^n \zeta(j,k-n)\).
    Consequently,
    \[
            \zeta(j,k) \abs{\jump{\apart^{t}(\cL_{\varphi}^{n}h)}{a_{j,k}}}
            \le C \tilde{\Lambda}^n \zeta(j,k-n) \sum_{i=0}^{t}   \abs{\jump{\apart^{i}h}{a_{j,k-n}}}.  
    \] 
    Summing over \(j\) and \(k\geq n\),
    we have shown that the final term of \eqref{eq:zeta-norm-higher} is bounded above by
    \[
        C \tilde{\Lambda}^n \sum_{j=1}^{N_{1}}\sum_{k=0}^{\infty} 
        \zeta(j,k) \sum_{i=0}^{t}   \abs{\jump{\apart^{i}h}{a_{j,k}}}
        = C \tilde\Lambda^{n} \norm{h}_{\fJ_\zeta^t}.
    \]  
    The proof is completed by summing all the estimates over \(t\).
\end{proof}

We are now in the position to combine the estimates obtained above.
Since we fixed \(r\) such that \(\eta \lambda^{r-1} < \widetilde{\Lambda}\), we can choose \(\widetilde{\eta} > \eta\) and \(\widetilde{\lambda} > \lambda\) such that  \(\widetilde{\eta} \widetilde{\lambda}^{r-1} \leq \widetilde{\Lambda}\).
Let such \(\widetilde{\eta}\) and \(\widetilde{\lambda}\) be fixed for the remainder of the section.

\begin{lemma}
    \label{lem:LY}
    There exists \(C>0\) and, for all \(n\in \bN\), there exists \(C_n>0\)
    and a finite set of linear functionals \({\{\rho_{j}\}}_{j\in Q_n}\), such that, for all \(h \in \fB_0\),
    \[
        \begin{split}
            \norm{\cL_{\varphi}^n h}_{\fB_{\zeta,r}}
            &\le
            C_n \norm{h}_{\fC^{r-1}} +
            C \tilde{\eta}^{n}
            \tilde{\lambda}^{n(r-1)}
            \norm{h}_{\fC^r} \\
            & \ \ + \sum_{j \in Q_{n}}|\rho_{j}(h)|
            + C \widetilde{\Lambda}^n \norm{h}_{\fJ_\eta^r}.
        \end{split}
    \]
\end{lemma}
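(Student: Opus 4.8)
The plan is to add together the two estimates already in hand: Lemma~\ref{lem:est-c-part} controls the continuous part $\norm{\cdot}_{\fC^r}$ of the $\fB_{\zeta,r}$-norm and Lemma~\ref{lem:est-j-part} controls the jump part $\norm{\cdot}_{\fJ_\zeta^r}$. Straight from the definition~\eqref{eq:def-custom} one reads off the exact splitting $\norm{g}_{\fB_{\zeta,r}} = \norm{g}_{\fC^r} + \norm{g}_{\fJ_\zeta^r}$, so it suffices to apply each of the two lemmas with $g = \cL_\varphi^n h$ and add. By Lemma~\ref{lem:boundedness} and density it is enough to work with $h \in \fB_0$.

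The only point that needs a short argument is to replace the $L^\infty$-factor $\norm{\varphi_n}_{L^\infty}\norm{1/(T^n)'}_{L^\infty}^{r-1}$ coming from Lemma~\ref{lem:est-c-part} by the clean exponential rate $\widetilde{\eta}^{n}\widetilde{\lambda}^{n(r-1)}$. Since $\eta = \lim_n \norm{\varphi_n}_{L^\infty}^{1/n}$ and $\lambda = \lim_n \norm{1/(T^n)'}_{L^\infty}^{1/n}$ (these limits exist by submultiplicativity of the two sequences, using $\varphi_{m+n} = \varphi_m\cdot(\varphi_n\circ T^m)$ and the chain rule), and since $\widetilde{\eta} > \eta$ and $\widetilde{\lambda} > \lambda$ have already been fixed, there is $n_0$ such that $\norm{\varphi_n}_{L^\infty} \le \widetilde{\eta}^{n}$ and $\norm{1/(T^n)'}_{L^\infty} \le \widetilde{\lambda}^{n}$ for all $n \ge n_0$; the finitely many smaller $n$ cost only a fixed multiplicative constant, and as $r$ is fixed the $(r-1)$-st power costs a further fixed constant. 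This yields $\norm{\varphi_n}_{L^\infty}\norm{1/(T^n)'}_{L^\infty}^{r-1} \le C\,\widetilde{\eta}^{n}\widetilde{\lambda}^{n(r-1)}$ with $C$ independent of $n$ and $h$.

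Then I would combine: Lemma~\ref{lem:est-c-part} together with the bound just obtained gives $\norm{\cL_\varphi^n h}_{\fC^r} \le C_n\norm{h}_{\fC^{r-1}} + C\,\widetilde{\eta}^{n}\widetilde{\lambda}^{n(r-1)}\norm{h}_{\fC^r}$, while Lemma~\ref{lem:est-j-part} gives $\norm{\cL_\varphi^n h}_{\fJ_\zeta^r} \le \sum_{l\in Q_n}\abs{\rho_l(h)} + C\,\widetilde{\Lambda}^{n}\norm{h}_{\fJ_\zeta^r}$ for a finite family $\{\rho_l\}_{l\in Q_n}$ of bounded linear functionals on $\fB_{\zeta,r}$ (boundedness coming, as in that lemma, from Lemma~\ref{lem:continuity}). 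Adding the two inequalities and relabelling constants — letting the new $C_n$ absorb the constant from Lemma~\ref{lem:est-c-part}, and taking $C$ to be the larger of the two universal constants — produces exactly the stated inequality. I do not expect a genuine obstacle here: the substance lies in the two constituent lemmas, and ultimately in the distortion estimate Lemma~\ref{lem:super-Da}; what remains is bookkeeping, with the mild Fekete-type passage from norms to rates above the only ingredient beyond addition.
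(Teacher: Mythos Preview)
Your proposal is correct and follows exactly the paper's own proof: split $\norm{\cdot}_{\fB_{\zeta,r}}$ into its continuous and jump parts, apply Lemma~\ref{lem:est-c-part} and Lemma~\ref{lem:est-j-part} respectively, and absorb the factor $\norm{\varphi_n}_{L^\infty}\norm{1/(T^n)'}_{L^\infty}^{r-1}$ into $C\,\widetilde{\eta}^{\,n}\widetilde{\lambda}^{\,n(r-1)}$ using $\widetilde{\eta}>\eta$, $\widetilde{\lambda}>\lambda$. The only difference is that you spell out the Fekete/submultiplicativity step which the paper leaves implicit.
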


\begin{proof}
    Since \(\tilde{\eta} > \eta\) and \(\tilde{\lambda} > \lambda\), Lemma~\ref{lem:est-c-part} implies that, for some \(C>0\),
    \[
        \norm{\cL_{\varphi}^n h}_{\fC^r}
        \le C_n \norm{h}_{\fC^{r-1}} +
        C \tilde{\eta}^{n}
        \tilde{\lambda}^{n(r-1)}
        \norm{h}_{\fC^r},
    \]
    as required for the first and second term in the statement of the lemma.
    The third and fourth terms are given by Lemma~\ref{lem:est-j-part}.
\end{proof}

\subsection{Essential spectral radius}
Here we use the estimate of Lemma~\ref{lem:LY}, together with a compactness result, in order to estimate the essential spectral radius.

The following is a type of compact embedding between \(\norm{\cdot}_{\fB_{\zeta,r}}\) and \(\norm{\cdot}_{\fC^s}\).

\begin{lemma}
    \label{lem:compact}
    Suppose that \(s\in \bN_0\), \(p\in\bN\), \(s < p\).
    For every \(\epsilon>0\) there exists a finite set \({\{S_k\}}_{k}\) of subsets of \(\fB_{\zeta,p}\) such that \(\{h: \norm{h}_{\fB_{\zeta,p}} \leq 1\}\) is covered by \(\bigcup_{k} S_{k}\) and,
    for each \(k\), \(\norm{h-g}_{\fC^s}\leq \epsilon\) for every \(h,g \in S_k\).
\end{lemma}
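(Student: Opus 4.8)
The plan is to realise the desired cover by combining a genuine compact-embedding argument for the \(L^1\)-type (continuous) part of the norm with a crude but sufficient control of the jump part. First I would observe that, since \(s<p\), any \(h\in\fB_{\zeta,p}\) controls \(\norm{\apart^j h}_{\BV}\) for all \(j\le s\) by Lemma~\ref{lem:BV} (applied with \(r=p\)), and in particular \(\apart^{s}h\in\BV\) with \(\norm{\apart^{s}h}_{\BV}\le C\norm{h}_{\fB_{\zeta,p}}\). Now \(\norm{h}_{\fC^s}=\sum_{j=0}^{s}\norm{\apart^{j}h}_{L^1}\), and for \(j<s\) the \(L^1\)-norm of \(\apart^{j}h\) is, up to a constant, controlled by \(\norm{\apart^{s}h}_{L^1}\) together with boundary values, but the cleanest route is simply: the unit ball of \(\fB_{\zeta,p}\) maps, under \(h\mapsto(h,\apart h,\dots,\apart^{s}h)\), into a bounded subset of \(\BV^{s+1}\) (each component bounded in \(\BV\) by Lemma~\ref{lem:BV}). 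By Helly's selection theorem \(\BV\hookrightarrow L^1\) is compact, hence the image of the unit ball of \(\fB_{\zeta,p}\) in \((L^1)^{s+1}\), equivalently in \((\fC^s,\norm{\cdot}_{\fC^s})\), is totally bounded.

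Therefore, given \(\epsilon>0\), total boundedness yields finitely many points \(h_1,\dots,h_M\) in the unit ball such that the balls \(\{g:\norm{g-h_k}_{\fC^s}<\epsilon/2\}\) cover the unit ball of \(\fB_{\zeta,p}\). I would then set
\[
    S_k = \{h\in\fB_{\zeta,p}: \norm{h}_{\fB_{\zeta,p}}\le 1,\ \norm{h-h_k}_{\fC^s}<\tfrac{\epsilon}{2}\}.
\]
These finitely many sets cover \(\{h:\norm{h}_{\fB_{\zeta,p}}\le1\}\) by construction, and for any \(h,g\in S_k\) the triangle inequality gives \(\norm{h-g}_{\fC^s}\le\norm{h-h_k}_{\fC^s}+\norm{h_k-g}_{\fC^s}<\epsilon\), as required. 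A small bookkeeping point: the \(h_k\) can be taken in \(\fB_0\) by density in \(\fB_{\zeta,p}\), which is harmless since we only need them as centres.

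The main obstacle I anticipate is making the reduction to a \(\BV\)-compactness statement fully rigorous, i.e.\ checking that boundedness in \(\fB_{\zeta,p}\) really does give uniform \(\BV\) bounds on all the derivatives \(\apart^{j}h\), \(j\le s\), and that these bounds are independent of \(h\) in the unit ball. This is exactly what Lemma~\ref{lem:BV} provides (the weights \(\zeta(j,k)\) are bounded below by \(1/C\), so the full variation \(\sum_{a\in\Delta}\abs{\jump{\apart^j h}{a}}\) is absorbed), so the argument is a matter of citing it with the correct indices. The only genuinely analytic input beyond the earlier lemmas is the classical compactness of \(\BV\hookrightarrow L^1\) on a bounded interval (Helly), which is standard (see, e.g., \cite{AFP00}).
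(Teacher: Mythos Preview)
Your proposal is correct and follows essentially the same approach as the paper: both arguments invoke Lemma~\ref{lem:BV} to obtain uniform \(\BV\) bounds on \(\apart^{j}h\) for \(j\le s\) over the unit ball of \(\fB_{\zeta,p}\), and then use the classical compactness of \(\BV\hookrightarrow L^1\) to produce the finite cover with small \(\norm{\cdot}_{\fC^s}\)-diameter. The only cosmetic differences are that the paper constructs the cover coordinate-by-coordinate (one cover per derivative level, then intersects) whereas you phrase it as total boundedness of the image in \((L^1)^{s+1}\) and use centres plus the triangle inequality; your aside about separately controlling the jump part turns out to be unnecessary, since \(\norm{\cdot}_{\fC^s}\) involves no jumps.
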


\begin{proof}
    By Lemma~\ref{lem:Da-bounded} and Lemma~\ref{lem:BV} which implies that \(\{h : \norm{\smash{\apart^l} h}_{\fB_{\zeta,1}}  \leq 1 \}\) is a subset of  \(\{h : \norm{\smash{\apart^l} h}_{\BV}  \leq C \}\),
    \[
        \left\{h : \norm{h}_{\fB_{\zeta,p} \leq 1} \right\}
        \subseteq
        \bigcap_{l = 0}^{p-1} 
        \left\{h : \norm{\smash{\apart^l} h}_{\BV}  \leq C \right\}.
    \]
    Fix, for the moment, \(l\) and consider the set \(\left\{h : \norm{\smash{\apart^l} h}_{\BV}  \leq C \right\}\).
    The compactness of \(\BV\) in \(L^1\) implies that, for all \(\epsilon>0\) there exists a finite cover \({\left\{ S_k \right\}}_k\) of \(\left\{h : \norm{h}_{\BV}  \leq C \right\}\) such that, \(\norm{f - g}_{L^1} \leq \epsilon\) whenever \(f,g \in S_k\) for each \(k\).
    The sets \(R_k = \{h : \apart^l h \in S_k \}\) 
    are a finite cover of \(\{h : \norm{\smash{\apart^l} h}_{\BV}  \leq C \}\) such that \(\norm{\apart^l(f - g)}_{L^1} \leq \epsilon\) whenever \(f,g \in R_k\) for each \(k\).
    This can be done for all \(0 \leq l \leq p-1\) and so would imply that \(\norm{f - g}_{\fC^{p-1}} \leq \epsilon\).
    This implies that the statement holds true for all \(s<p\).
\end{proof}
\begin{lemma}
    \label{lem:ess-spec}
    The essential spectral radius of \(\cL_{\varphi}\) on \(\fB_{\zeta,r}\) is at most \(\widetilde{\Lambda}\).
\end{lemma}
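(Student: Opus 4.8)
The plan is to deduce Lemma~\ref{lem:ess-spec} from the Lasota--Yorke inequality of Lemma~\ref{lem:LY} via the standard Hennion--Nussbaum description of the essential spectral radius in terms of the measure of non-compactness. The first step is to repackage Lemma~\ref{lem:LY}. By the very definitions of the continuous and jump parts of the norm one has \(\norm{h}_{\fB_{\zeta,r}} = \norm{h}_{\fC^r} + \norm{h}_{\fJ_\zeta^r}\), and, since \(\widetilde{\eta}\) and \(\widetilde{\lambda}\) were fixed so that \(\widetilde{\eta}\,\widetilde{\lambda}^{\,r-1}\le\widetilde{\Lambda}\), the two ``leading'' contributions of Lemma~\ref{lem:LY} combine into \(C\widetilde{\eta}^{\,n}\widetilde{\lambda}^{\,n(r-1)}\norm{h}_{\fC^r} + C\widetilde{\Lambda}^{\,n}\norm{h}_{\fJ_\zeta^r} \le C\widetilde{\Lambda}^{\,n}\norm{h}_{\fB_{\zeta,r}}\). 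Writing \(\norm{h}_{*,n} := C_n\norm{h}_{\fC^{r-1}} + \sum_{j\in Q_n}\abs{\rho_j(h)}\) and using Lemma~\ref{lem:boundedness} to pass from \(\fB_0\) to \(\fB_{\zeta,r}\) by density, Lemma~\ref{lem:LY} then reads, for every \(n\in\bN\) and every \(h\in\fB_{\zeta,r}\),
\[
    \norm{\cL_\varphi^{\,n} h}_{\fB_{\zeta,r}} \le C\,\widetilde{\Lambda}^{\,n}\,\norm{h}_{\fB_{\zeta,r}} + \norm{h}_{*,n}.
\]

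The second step is to verify that each \(\norm{\cdot}_{*,n}\) is a seminorm which is \emph{compact} relative to \(\norm{\cdot}_{\fB_{\zeta,r}}\), i.e.\ that bounded subsets of \((\fB_{\zeta,r},\norm{\cdot}_{\fB_{\zeta,r}})\) are totally bounded for \(\norm{\cdot}_{*,n}\). For the \(\norm{\cdot}_{\fC^{r-1}}\) part this is exactly Lemma~\ref{lem:compact} with \(s=r-1<r=p\). For the finite family \(\{\rho_j\}_{j\in Q_n}\), each \(\rho_j\) is a bounded linear functional on \(\fB_{\zeta,r}\) by Lemma~\ref{lem:continuity} together with Lemma~\ref{lem:boundedness}, so the map \(h\mapsto(\rho_j(h))_{j\in Q_n}\) carries the unit ball of \(\fB_{\zeta,r}\) into a bounded, hence precompact, subset of \(\bC^{\#Q_n}\); thus \(\sum_{j\in Q_n}\abs{\rho_j(\cdot)}\) is also compact relative to \(\norm{\cdot}_{\fB_{\zeta,r}}\). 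Taking a common refinement of the two associated finite covers shows that \(\norm{\cdot}_{*,n}\) is compact relative to \(\norm{\cdot}_{\fB_{\zeta,r}}\).

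The third step is the usual covering argument. Fix \(n\) and \(\varepsilon>0\); by compactness of \(\norm{\cdot}_{*,n}\) cover the unit ball \(B\) of \(\fB_{\zeta,r}\) by finitely many sets \(S_1,\dots,S_m\) with \(\norm{h-g}_{*,n}\le\varepsilon\) whenever \(h,g\in S_i\), and pick \(h_i\in S_i\). The displayed inequality gives, for \(h\in S_i\), \(\norm{\cL_\varphi^{\,n}h - \cL_\varphi^{\,n}h_i}_{\fB_{\zeta,r}} \le 2C\widetilde{\Lambda}^{\,n} + \varepsilon\), so \(\cL_\varphi^{\,n}B\) is covered by finitely many balls of that radius; letting \(\varepsilon\to 0\), the (ball) measure of non-compactness satisfies \(\gamma(\cL_\varphi^{\,n})\le 2C\widetilde{\Lambda}^{\,n}\). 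By Nussbaum's formula the essential spectral radius of \(\cL_\varphi\) on \(\fB_{\zeta,r}\) equals \(\lim_n\gamma(\cL_\varphi^{\,n})^{1/n}\), which is at most \(\lim_n(2C)^{1/n}\widetilde{\Lambda} = \widetilde{\Lambda}\), proving the lemma. (Equivalently one may quote Hennion's theorem directly; see, e.g., \cite{Baladi18}.)

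I expect the only genuine subtlety to be the \(n\)-dependence of the family \(Q_n\) of functionals in Lemma~\ref{lem:LY}: there is no single seminorm that is simultaneously compact relative to \(\norm{\cdot}_{\fB_{\zeta,r}}\) and usable for all \(n\), which is precisely why the argument is routed through Nussbaum's formula and the inequality is applied one iterate at a time. A minor point to record is that the \(n\)-dependent constant \(C_n\) (and the factor \(2C\)) are harmless, since they vanish after taking \(n\)-th roots and letting \(n\to\infty\); everything else is bookkeeping with the lemmas already in place.
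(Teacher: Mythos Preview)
Your proposal is correct and follows essentially the same approach as the paper's own proof: both combine the two leading terms of Lemma~\ref{lem:LY} via \(\widetilde{\eta}\widetilde{\lambda}^{r-1}\le\widetilde{\Lambda}\), use Lemma~\ref{lem:compact} for the \(\fC^{r-1}\) part together with the finite-rank nature of \(\sum_{j\in Q_n}\abs{\rho_j(\cdot)}\) to build a finite cover of the unit ball, and then invoke Nussbaum's characterisation of the essential spectral radius. The only cosmetic difference is that the paper fixes \(\varepsilon=\widetilde{\Lambda}^n\) from the outset rather than letting \(\varepsilon\to 0\) at the end, and does not name the intermediate seminorm \(\norm{\cdot}_{*,n}\) or invoke Hennion explicitly.
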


\begin{proof}
    Let \(B_0 = \left\{ \smash{h \in \fB_{\zeta,r} : \norm{h}_{\fB_{\zeta,r}} \leq 1} \right\}\)
    and, for \(n\in\bN\), let \(B_n = \cL_{\varphi}^n B_0\).
    We will show that, for each \(n\in \bN\) large, \(B_n\) admits a finite cover such that the diameter of each element of the cover, measured in the \(\norm{\cdot}_{\fB_{\zeta,r}}\)-norm, is not greater than \(C\widetilde{\Lambda}^n\).
    According to Nussbaum~\cite{Nussbaum70}, this suffices to prove that the essential spectral radius of \(\cL_{\varphi}:\fB_{\zeta,r} \to \fB_{\zeta,r}\) is not greater than \(\widetilde{\Lambda}\).

    Since \(\widetilde{\eta}\widetilde{\lambda}^{r-1} \leq \widetilde{\Lambda}\),
    Lemma~\ref{lem:LY} tells that, for all \(h \in \fB_{\zeta,r}\), \(n\in\bN\),
    \[
        \begin{aligned}
            \norm{\cL_{\varphi}^n h}_{\fB_{\zeta,r}}
            \le
            C \widetilde{\Lambda}^n \norm{h}_{\fB_{\zeta,r}}
            + C_n \norm{h}_{\fC^{r-1}}
            + \sum_{j\in Q_n} |\rho_{j}(h)|.
        \end{aligned}
    \]
    Fix \(n\in \bN\).
    Using the compact embedding of Lemma~\ref{lem:compact} we can choose a finite set \({\{S_k\}}_k\) of subsets of \(B_0\) such that
    \[
        C_n \norm{h-g}_{\fC^{r-1}} \leq \widetilde{\Lambda}^n,
        \quad \quad
        \text{for all  \(h,g \in S_k\), for each \(k\)}.
    \]
    Observe that the set \( \{ \sum_{j\in Q_{n}} \rho_{j}(h) :h \in B_0 \} \subset \bC\) is bounded because \(B_0\) is bounded and the \(\rho_{j}\) are  linear functionals.
    This means that we can further refine the finite covering \({\{S_k\}}_k\) of \(B_0\) such that it also satisfies
    \[
        \sum_{j \in Q_{n}}|\rho_{j}(h-g)|\leq \widetilde{\Lambda}^n,
        \quad \quad
        \text{for all  \(h,g \in S_k\), for each \(k\)}.
    \]

    Observe that \({\{\cL^n_\varphi S_k\}}_k\) is a finite cover of \(B_n\).
    For each \(k\), the combination of the above estimates implies that, for all \(h,g\in S_k\),
    \[
        \norm{\cL^n_\varphi h - \cL^n_\varphi g}_{\fB_{\zeta,r}}
        = \norm{\cL^n_\varphi (h - g)}_{\fB_{\zeta,r}}
        \leq (C+2) \widetilde{\Lambda}^n.
    \]
    I.e., as required, we have estimated the diameter of each of the sets \(\cL^n_\varphi S_k\).
\end{proof}

This concludes the proof of Theorem~\ref{thm:custom} since the three statements of the theorem are given by Lemma~\ref{lem:well-def}, Lemma~\ref{lem:boundedness} and Lemma~\ref{lem:ess-spec}, respectively. 
The inclusion in \(L^{\infty}\) is given by Lemma~\ref{lem:BV}.

\section{Distortion}
\label{sec:distort}

This section is devoted to the proof of Lemma~\ref{lem:super-Da}. 
We work in the same setting as Section~\(\ref{sec:custom}\), but none of this section relies on previous results.

\begin{lemma}%
    \label{lem:deriv-L}
    For all \(h\in \fB_0\), 
    \[
        \apart{\left(\cL_{\varphi}h\right)}
        = \cL_{\varphi}\left(\tfrac{\varphi^{\prime}}{\varphi \cdot \sT'} \cdot h\right)
        + \cL_{\varphi}\left(\tfrac{1}{\sT'} \cdot \apart{h} \right).
    \]
\end{lemma}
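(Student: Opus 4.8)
The plan is to verify the identity by a direct term-by-term chain-rule computation in the defining sum~\eqref{eq:L-defin}, tracking only the absolutely continuous part of the derivative. Before computing, I would check that both sides are meaningful. By \eqref{eq:good-B} we have \(\cL_{\varphi}h\in\fB_0\), so \(\apart(\cL_{\varphi}h)\) is well defined and agrees with the classical derivative off a finite set. On the right, since \(\varphi\) admits a \(\cC^\infty\) extension to each \(\overline{\omega}\), is uniformly bounded away from zero, and \(\sT\) is \(\cC^\infty\) and strictly monotone on each \(\omega\) (hence \(\sT'\) is \(\cC^\infty\) and non-vanishing there), the functions \(\frac{\varphi'}{\varphi\cdot\sT'}\cdot h\) and \(\frac{1}{\sT'}\cdot\apart h\) are again piecewise \(\cC^\infty\) with discontinuities contained in \(\Delta\) (using \(\Gamma\subseteq\Delta\) and \(\apart\fB_0\subseteq\fB_0\) from \eqref{eq:good-B}); thus they lie in \(\fB_0\), and so do \(\cL_{\varphi}\) applied to them.

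Since \(\apart\) is a local operator and every function in sight is piecewise \(\cC^\infty\) with jumps in a finite set, it suffices to prove the identity pointwise on a co-finite subset of \([0,1]\) on which all these functions (and their classical derivatives) are continuous. Fix such a point \(x\); for each \(\omega\in\Omega\) with \(x\) in the interior of \(\sT\omega\), the corresponding summand of \(\cL_{\varphi}h\) near \(x\) is \((\varphi\cdot h)\circ(\sT|_{\omega})^{-1}\) with \(\characteristic{\sT\omega}\equiv 1\) locally, so the discontinuity of \(\characteristic{\sT\omega}\) does not affect \(\apart\). Applying the product rule \((\varphi h)'=\varphi' h+\varphi\,\apart h\) together with the inverse function theorem \(\big((\sT|_{\omega})^{-1}\big)'(x)=1/\sT'\big((\sT|_{\omega})^{-1}(x)\big)\), the derivative of this summand at \(x\) is
\[
    \left[\frac{\varphi' h+\varphi\,\apart h}{\sT'}\right]\!\big((\sT|_{\omega})^{-1}(x)\big),
\]
the sign of \(\sT'\) being absorbed automatically, which is why no absolute value appears.

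Finally I would recognise the two pieces: using \(\varphi\cdot\frac{\varphi'}{\varphi\cdot\sT'}=\frac{\varphi'}{\sT'}\) and \(\varphi\cdot\frac{1}{\sT'}=\frac{\varphi}{\sT'}\), the \(\varphi' h\) term is exactly the \(\omega\)-summand of \(\cL_{\varphi}\big(\tfrac{\varphi'}{\varphi\cdot\sT'}\cdot h\big)\) at \(x\), and the \(\varphi\,\apart h\) term is the \(\omega\)-summand of \(\cL_{\varphi}\big(\tfrac{1}{\sT'}\cdot\apart h\big)\) at \(x\). Summing over the \(\omega\) with \(x\in\sT\omega\) (all other summands vanish near \(x\)) gives the claimed equality at \(x\), hence as elements of \(\fB_0\). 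The only point needing a little care is the middle step: making sure the distributional jump contributions — both from differentiating the indicators \(\characteristic{\sT\omega}\) and from the discontinuities of \(h\) inside \(\sT\omega\) — are irrelevant, which is precisely why the statement is phrased with \(\apart\) rather than with the full weak derivative \(\dD=\apart{}+\jpart\).
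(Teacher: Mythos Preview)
Your proposal is correct and follows essentially the same approach as the paper: a direct term-by-term chain-rule and product-rule computation on each summand \((\varphi\cdot h)\circ(\sT|_{\omega})^{-1}\cdot\characteristic{\sT\omega}\), then summing over \(\omega\). The paper's proof is more terse and omits the well-definedness checks and the discussion of jump contributions, but the core calculation is identical.
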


\begin{proof}
    Since
    \( \cL_{\varphi}h  =  \sum_{k=1}^{n} \left(\varphi \cdot h\right) \circ  {\left.\sT\right|}_{\omega_{k}}^{-1} \cdot \characteristic{\sT\omega_k}\),
    we calculate the derivative of each summand, restricted to \(\sT\omega_k\),
    \[
        \begin{aligned}
            \apart \left( \left(\varphi \cdot h\right) \circ {\left.\sT\right|}_{\omega_{k}}^{-1}\right)
             & =
            \frac{\apart{\left(\varphi \cdot h\right)}}{\sT'} \circ {\left.\sT\right|}_{\omega_{k}}^{-1} \\
             & =
            \left(\frac{ \varphi^{\prime}}{\sT'} \cdot h\right) \circ {\left.\sT\right|}_{\omega_{k}}^{-1}
            +
            \left(\frac{\varphi}{\sT'} \cdot \apart{h} \right) \circ {\left.\sT\right|}_{\omega_{k}}^{-1}.
        \end{aligned}
    \]
    Summing over \(k\) we obtain the result.
\end{proof}

\begin{lemma}
    \label{lem:form-of-A}
    There exist functions \(A_{l,p,n}: [0,1] \to \bC\), \(0 \le p \le r\) and \(0 \le l \le p\) such that 
    \[
        D_{a}^{p}\left(\cL_{\varphi}^n h\right)
    = \sum_{l=0}^{p} \cL_{\varphi}^n(A_{l,p,n} \cdot \apart^{l}h).
    \]
    Furthermore, these functions are \(\cC^{\infty}\) on \(\omega\) for each \(\omega \in \Omega_n\) and satisfy,
    \begin{equation}
        \label{eq:A_lpn}
        \begin{aligned}
            A_{0,p+1,n}
             & = \frac{A_{0,p,n}^{\prime}}{(T^n)'}
            + \frac{A_{0,p,n}}{(T^{n})^{\prime}}
            \cdot  \frac{\varphi_{n}^{\prime}}{\varphi_{n}}, \\
            A_{l,p+1,n}
             & = \frac{A_{l,p,n}^{\prime}}{(T^n)'}
            +\frac{A_{l,p,n}}{(T^{n})^{\prime}}
            \cdot \frac{\varphi_{n}^{\prime}}{\varphi_{n}}
            + \frac{A_{l-1,p,n}}{(T^n)'},
            \quad \text{when \(1\leq l \leq p\)},             \\
            A_{p+1,p+1,n}
             & =  \frac{A_{p,p,n}}{(T^n)'}.
        \end{aligned}
    \end{equation}
\end{lemma}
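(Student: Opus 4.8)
The plan is to prove Lemma~\ref{lem:form-of-A} by induction on \(p\), using Lemma~\ref{lem:deriv-L} (suitably iterated to the \(n\)-th power) as the engine. First I would establish the ``iterated'' version of Lemma~\ref{lem:deriv-L}: for all \(h\in\fB_0\),
\[
    \apart\bigl(\cL_\varphi^n h\bigr)
    = \cL_\varphi^n\Bigl(\tfrac{\varphi_n^{\prime}}{\varphi_n\cdot(T^n)^{\prime}}\cdot h\Bigr)
    + \cL_\varphi^n\Bigl(\tfrac{1}{(T^n)^{\prime}}\cdot\apart h\Bigr).
\]
This follows either by applying Lemma~\ref{lem:deriv-L} with \(\cL_\varphi\) replaced by \(\cL_\varphi^n\) — which is itself a transfer operator for the piecewise monotone map \(T^n\) with weight \(\varphi_n\), over the partition \(\Omega_n\) — or by iterating the \(n=1\) case and applying the chain rule; the key identities are \((T^n)^{\prime}=\prod_{k=0}^{n-1}T^{\prime}\circ T^k\) and \(\varphi_n=\prod_{k=0}^{n-1}\varphi\circ T^k\), which show \(\varphi_n^{\prime}/\varphi_n\) and \(1/(T^n)^{\prime}\) are the correct coefficients. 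Note \(1/(T^n)^{\prime}\) and \(\varphi_n^{\prime}/\varphi_n\) are \(\cC^\infty\) on each \(\omega\in\Omega_n\), using that \(T^n|_\omega\) is \(\cC^r\) (indeed \(\cC^\infty\)) with nonvanishing derivative and that \(\varphi\) is \(\cC^\infty\) and bounded away from zero on each partition element.

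The base case \(p=0\) is trivial with \(A_{0,0,n}=1\). For the inductive step, suppose \(\apart^p(\cL_\varphi^n h)=\sum_{l=0}^p\cL_\varphi^n(A_{l,p,n}\cdot\apart^l h)\) with the \(A_{l,p,n}\) as claimed. Apply \(\apart\) to both sides and, to each summand \(\cL_\varphi^n(A_{l,p,n}\cdot\apart^l h)\), apply the iterated Lemma~\ref{lem:deriv-L} with \(h\) there replaced by \(A_{l,p,n}\cdot\apart^l h\). This produces
\[
    \cL_\varphi^n\Bigl(\tfrac{\varphi_n^{\prime}}{\varphi_n(T^n)^{\prime}}A_{l,p,n}\apart^l h\Bigr)
    + \cL_\varphi^n\Bigl(\tfrac{1}{(T^n)^{\prime}}\apart\bigl(A_{l,p,n}\apart^l h\bigr)\Bigr),
\]
and expanding \(\apart(A_{l,p,n}\apart^l h)=A_{l,p,n}^{\prime}\apart^l h+A_{l,p,n}\apart^{l+1}h\) via the product rule lets me collect the coefficient of \(\apart^l h\) for each \(l\). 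Summing over \(l\) and reindexing, the coefficient of \(\apart^0 h\) picks up only the \(l=0\) terms, giving \(A_{0,p+1,n}=A_{0,p,n}^{\prime}/(T^n)^{\prime}+A_{0,p,n}\varphi_n^{\prime}/(\varphi_n(T^n)^{\prime})\); the coefficient of \(\apart^l h\) for \(1\le l\le p\) gets a contribution from the \(l\)-th summand (the two terms \(A_{l,p,n}^{\prime}/(T^n)^{\prime}\) and \(A_{l,p,n}\varphi_n^{\prime}/(\varphi_n(T^n)^{\prime})\)) plus the carry-over \(A_{l-1,p,n}/(T^n)^{\prime}\) from differentiating and multiplying up the \((l-1)\)-st summand; and the top coefficient \(\apart^{p+1}h\) comes solely from \(A_{p,p,n}\apart^p h\), yielding \(A_{p+1,p+1,n}=A_{p,p,n}/(T^n)^{\prime}\). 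This is exactly \eqref{eq:A_lpn}. Regularity is preserved at each step: each recursion builds \(A_{l,p+1,n}\) from \(A_{\cdot,p,n}\), its derivative, and the \(\cC^\infty\) functions \(1/(T^n)^{\prime}\), \(\varphi_n^{\prime}/\varphi_n\), so by induction all \(A_{l,p,n}\) are \(\cC^\infty\) on each \(\omega\in\Omega_n\); and from \(A_{p,p,n}=A_{p-1,p-1,n}/(T^n)^{\prime}\) together with \(A_{0,0,n}=1\) one gets \(A_{p,p,n}=((T^n)^{\prime})^{-p}\) by a one-line induction, which is the claimed formula \(A_{p,p,n}=((T^n)^{\prime})^{-p}\) appearing in Lemma~\ref{lem:super-Da}.

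What remains — and this is the genuinely substantive part, carried out separately to obtain the full strength of Lemma~\ref{lem:super-Da} — is the \emph{uniform} bound \(\norm{A_{l,p,n}}_{L^\infty}\le C\) with \(C\) independent of \(n\). The recursion \eqref{eq:A_lpn} a priori allows the \(L^\infty\) norms to grow with \(n\) because each application of \(\apart\) differentiates the previous coefficients, and \(A_{l,p,n}^{\prime}\) involves \((1/(T^n)^{\prime})^{\prime}\) and higher derivatives of \(1/(T^n)^{\prime}\). The expectation is that this is controlled by a bounded-distortion argument: uniform expansion, \(\abs{(T^n)^{\prime}}\ge C\lambda^{-n}\), forces \(1/(T^n)^{\prime}\) to be exponentially small, while the standard distortion estimates for \(\cC^r\) (here \(\cC^\infty\)) uniformly expanding maps bound quantities such as \(\abs{(T^n)^{\prime\prime}/((T^n)^{\prime})^2}\) and their higher-order analogues uniformly in \(n\); iterating \eqref{eq:A_lpn} and tracking these ratios should yield the uniform bound, with the exponential decay of \(1/(T^n)^{\prime}\) absorbing any polynomial-in-\(n\) losses. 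I expect this distortion control — verifying that all the coefficient functions arising from \(\le r\) rounds of the recursion stay uniformly bounded as \(n\to\infty\) — to be the main obstacle; it is exactly the kind of estimate flagged in the introduction as ``the required distortion result'' and deferred to Section~\ref{sec:distort}.
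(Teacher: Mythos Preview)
Your proposal is correct and follows essentially the same route as the paper: induction on \(p\) with base case \(A_{0,0,n}=1\), applying the derivative formula of Lemma~\ref{lem:deriv-L} (in its \(\cL_\varphi^n\) incarnation) to each summand, expanding \(\apart(A_{l,p,n}\,\apart^l h)\) by the product rule, and reindexing to read off the recursion~\eqref{eq:A_lpn}, with the \(\cC^\infty\) regularity following from that of \(\varphi_n\) and \((T^n)'\) and their nonvanishing. Your closing paragraph on the uniform-in-\(n\) bound is accurate but lies outside the scope of this lemma; the paper likewise defers it, handling it via the change of variables \(B_{l,p,n,\omega}=A_{l,p,n}\circ(T^n|_\omega)^{-1}\) and the distortion Lemma~\ref{lem:distort-crux}.
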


\begin{proof}
    Let \(n\in \bN\).
    First observe that \(A_{0,0,n} = 1\) and, in particular, is \(\cC^{\infty}\) on \(\omega\) for each \(\omega\).
    In other words the lemma holds for the case \(p=0\).
    Now suppose that the lemma holds for some fixed \(p\).
    This means that,    
    \[
        \begin{aligned}
            \apart^{p+1}\left(\cL^{n}_{\varphi} h\right)
             & =  \sum_{\ell=0}^{p} \apart \cL^{n}_{\varphi}(A_{\ell,p,n} \cdot \apart^{\ell} h) \\
             & = \sum_{\ell =0}^{p}
            \sum_{\omega \in \Omega_{n}}
            \apart  \left(   (\varphi_{n} \cdot A_{\ell,p,n} \cdot \apart^{l}h ) \circ {\left.\sT^{n}\right|}_{\omega}^{-1} \right)\cdot \characteristic{\sT^{n}\omega}.
        \end{aligned}
    \]
    By Lemma~\ref{lem:deriv-L} the term \(\apart  \left(   (\varphi_{n} \cdot A_{\ell,p,n} \cdot \apart^{l}h ) \circ {\left.\sT^{n}\right|}_{\omega}^{-1} \right)\) is equal to
    \[
        \left(
        \frac{\varphi_{n}}{(T^{n})'}
        \left[A_{\ell,p,n}^{'} \cdot \apart^{\ell}h
                +  A_{\ell,p,n} \cdot \apart^{\ell +1}h
                + \frac{\varphi_{n}'}{\varphi_{n}} \cdot A_{\ell,p,n}
                \cdot \apart^{\ell}h  \right]
        \right) \circ {\left.\sT^{n}\right|}_{\omega}^{-1}.
    \]
    Comparing this with the expression for \(D_{a}^{p+1}\cL_{\varphi}^n(h)\) (i.e., collecting same order derivatives together), we obtain the claimed relationship between the terms \eqref{eq:A_lpn}.
    That the \(A_{l,p,n}\) are \(\cC^{\infty}\) in each \(\omega \in \Omega_n\) follows from the proven relationships \eqref{eq:A_lpn}.
    Indeed, both \(\varphi_n\) and \((T^{n})^{\prime}\) are so and bounded away from zero by assumption.
\end{proof}

The following is \cite[Lemma 6.1]{GL06} transposed to the present setting.

\begin{lemma}
    \label{lem:distort-crux}
    Let \({\{I_k\}}_{k=0}^{\infty}\) be intervals and let \(S_k : I_k \to I_{k+1}\) be \(\cC^{r}\) contracting transformations with uniform contraction\footnote{I.e., there exists \(\lambda < 1\), \(C>0\) such that \((S^{k})^{\prime}(x) \le C\lambda^{k}\) for all \(k \in \mathbb{N}\), \(x \in I_{0}\).}.
    Furthermore let \(\theta_{k}:I_k\to (0,\infty)\) be \(\cC^{r}\) functions.
    Suppose that there exists \(C>0\) such that, for any \(x \in I_k\) for any \(k\), the \(\cC^{r}\) norm of \(\theta_{k}\) is bounded on a neighbourhood of \(x\) by \(C\theta_{k}(x)\). 
    Let \(S^{k} = S_{k-1} \circ \cdots \circ S_{0}\).
    
    There exists \(L>0\) such that, for all \(x \in I_0\),
    \begin{equation*}
        \norm{ \textstyle \prod_{k=0}^{n-1}\theta_{k}\circ S^{k} }_{\cC^{r}(I_0)} 
        \le 
        L \textstyle  \prod_{k=0}^{n-1}\theta_{k}\circ S^{k}(x).
    \end{equation*}
\end{lemma}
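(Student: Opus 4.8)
\textit{Proof sketch.}
The plan is to pass to the logarithm and reduce everything to uniform-in-\(n\) bounds on a sum of contracted smooth functions. Put \(\psi_{k} = \log\theta_{k}\), which is \(\cC^{r}\) and real-valued since \(\theta_{k}>0\), and write
\[
    \textstyle\prod_{k=0}^{n-1}\theta_{k}\circ S^{k} = \exp\left(\Psi_{n}\right),
    \qquad
    \Psi_{n} = \textstyle\sum_{k=0}^{n-1}\psi_{k}\circ S^{k}.
\]
Fix the point \(x\in I_{0}\) of the statement. Then \(\norm{\exp\Psi_{n}}_{\cC^{r}(I_{0})} = e^{\Psi_{n}(x)}\,\norm{\exp(\Psi_{n}-\Psi_{n}(x))}_{\cC^{r}(I_{0})}\) and \(e^{\Psi_{n}(x)} = \prod_{k=0}^{n-1}\theta_{k}\circ S^{k}(x)\), so it suffices to produce a constant \(L\), independent of \(n\) and \(x\), with \(\norm{\exp(\Psi_{n}-\Psi_{n}(x))}_{\cC^{r}(I_{0})}\le L\). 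Since \(\exp\) is smooth, the chain rule (Faà di Bruno) shows that this will follow once we bound, uniformly in \(n\), the quantity \(\sup_{I_{0}}\abs{\Psi_{n}-\Psi_{n}(x)}\) together with \(\sup_{I_{0}}\abs{D^{j}\Psi_{n}}\) for \(1\le j\le r\).

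First I would record the two uniform ingredients. The hypothesis on \(\theta_{k}\) implies that near any point \(z\in I_{k}\), on a neighbourhood whose size depends only on \(C\), the function \(\theta_{k}\) is two-sidedly comparable to \(\theta_{k}(z)\) (the upper bound is the hypothesis; the lower bound follows from it and the mean value theorem applied to \(\theta_{k}'\)) and has \(\cC^{r}\)-norm at most \(C\theta_{k}(z)\); hence \(\psi_{k}=\log\theta_{k}\) satisfies \(\abs{D^{j}\psi_{k}}\le\kappa\) on \(I_{k}\) for \(1\le j\le r\), with \(\kappa=\kappa(C,r)\) independent of \(k\). On the dynamical side, the uniform contraction gives \(\abs{(S^{k})'}\le C\lambda^{k}\) on \(I_{0}\), and --- using that the individual maps \(S_{k}\) have uniformly bounded \(\cC^{r}\)-norm, which in the application to Lemma~\ref{lem:super-Da} holds because the \(S_{k}\) are inverse branches of the uniformly expanding \(\cC^{\infty}\) map \(\sT\) --- an easy induction on \(j\) via the chain rule for \(S^{k}=S_{k-1}\circ S^{k-1}\) yields \(\norm{D^{j}S^{k}}_{\cC^{0}(I_{0})}\le C'\lambda^{k}\) for all \(1\le j\le r\), with \(C'\) independent of \(k\).

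With these in hand the two bounds are routine geometric-series estimates. For the \(\cC^{0}\) part, the mean value theorem and the first ingredient give, for \(y\in I_{0}\),
\[
    \abs{\Psi_{n}(y)-\Psi_{n}(x)}
    \le \sum_{k=0}^{n-1}\abs{\psi_{k}(S^{k}y)-\psi_{k}(S^{k}x)}
    \le \kappa\sum_{k=0}^{n-1}\abs{S^{k}y-S^{k}x}
    \le \kappa C\abs{I_{0}}\sum_{k=0}^{\infty}\lambda^{k},
\]
finite and independent of \(n\). For \(1\le j\le r\), Faà di Bruno expresses \(D^{j}(\psi_{k}\circ S^{k})\) as a finite sum of terms \(\psi_{k}^{(m)}(S^{k})\cdot\prod_{i}D^{j_{i}}S^{k}\) with \(1\le m\le j\) and each \(j_{i}\ge 1\); every factor \(D^{j_{i}}S^{k}\) is \(\le C'\lambda^{k}\) and \(\abs{\psi_{k}^{(m)}}\le\kappa\), so \(\abs{D^{j}(\psi_{k}\circ S^{k})}\le C''\lambda^{k}\), whence \(\sup_{I_{0}}\abs{D^{j}\Psi_{n}}\le C''\sum_{k=0}^{\infty}\lambda^{k}<\infty\), again uniformly in \(n\). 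Feeding these bounds into the Faà di Bruno expansion of \(\exp(\Psi_{n}-\Psi_{n}(x))\) yields the desired \(L\), and multiplying back by \(\prod_{k=0}^{n-1}\theta_{k}\circ S^{k}(x)\) completes the proof.

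The routine parts are the two geometric series; the points requiring care are (i) upgrading the \emph{renormalised} pointwise hypothesis on \(\theta_{k}\) to honest uniform bounds on \(D^{j}\log\theta_{k}\), which needs the uniform lower neighbourhood bound, and (ii) the fact that it is \(\lambda^{k}\) --- not merely boundedness --- that must be extracted from every derivative of the composition \(S^{k}\): this is what makes the sums converge and, crucially, makes \(L\) independent of \(n\), which is the whole content of the lemma. I expect the induction establishing \(\norm{D^{j}S^{k}}_{\cC^{0}}\le C'\lambda^{k}\), together with the accompanying Faà di Bruno bookkeeping, to be the main place where care is needed.
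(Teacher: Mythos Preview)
The paper does not supply a proof of this lemma: it is recorded as \cite[Lemma~6.1]{GL06} transposed to the present setting, with no argument given. Your sketch is the standard argument and is correct: take logarithms, bound \(\Psi_n-\Psi_n(x)\) in \(\cC^0\) and each \(D^j\Psi_n\) for \(1\le j\le r\) uniformly in \(n\) by geometric series in \(\lambda^k\), then exponentiate via Fa\`a di Bruno.

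One point you identify is worth making explicit. As written, the lemma's hypotheses control only \((S^k)'\); the induction yielding \(\norm{D^j S^k}_{\cC^0(I_0)}\le C'\lambda^k\) for \(j\ge 2\) genuinely requires uniform \(\cC^r\) bounds on the individual maps \(S_k\). You correctly import this from the intended application (the \(S_k\) are inverse branches of a uniformly expanding \(\cC^\infty\) transformation), which is also how \cite{GL06} sets things up, so this is not a gap in your proof but an implicit hypothesis in the statement here. A minor simplification: the two-sided neighbourhood comparability of \(\theta_k\) is not actually needed, since applying the hypothesis at each point \(y\) gives \(\abs{\theta_k^{(j)}(y)}\le C\theta_k(y)\) directly, and the derivatives of \(\log\theta_k\) are polynomials in the ratios \(\theta_k^{(i)}/\theta_k\), each bounded by \(C\).
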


\begin{lemma}
    \label{lem:diff-quotient}
    There exists \(C > 0\) such that the following holds.
    If \(A\) is a positive \(\cC^r\) function defined on an interval, such that for all \(x\), the \(\cC^r\) norm of \(A\) in a neighbourhood of \(x\) is bounded by \(L A(x)\), for some \(L\), then, 
    \[
        \norm{{A^\prime}/{A}}_{\cC^{r-1}}
        \leq CL^{r}.
    \]
\end{lemma}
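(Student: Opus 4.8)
The plan is to exploit the identity $A'/A = (\log A)'$ together with the classical Faà di Bruno formula, and to observe that the hypothesis gives pointwise control of every logarithmic derivative $A^{(k)}/A$, $0 \le k \le r$.

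First I would record two elementary consequences of the hypothesis. Since the $\cC^r$ norm of $A$ on a neighbourhood of $x$ dominates $\abs{A(x)}$ and $A$ is positive, the hypothesis $\norm{A}_{\cC^r(\text{nbhd of }x)} \le LA(x)$ forces $L \geq 1$. Moreover, because $x$ belongs to every neighbourhood of itself, the hypothesis gives $\abs{A^{(k)}(x)} \leq L A(x)$ for all $x$ and all $0 \leq k \leq r$; equivalently, $\abs{A^{(k)}(x)/A(x)} \leq L$ for $1 \leq k \leq r$.

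Next, for $1 \leq m \leq r$ I would expand $(\log A)^{(m)}$ by applying Faà di Bruno to $\log \circ A$, using $(\log)^{(i)}(t) = (-1)^{i-1}(i-1)!\, t^{-i}$. This gives
\[
    (\log A)^{(m)} = \sum_{\pi} (-1)^{\abs{\pi}-1}(\abs{\pi}-1)! \prod_{b\in\pi} \frac{A^{(\abs{b})}}{A},
\]
the sum being over partitions $\pi$ of $\{1,\ldots,m\}$, with $\abs{\pi}$ the number of blocks and $\abs{b}$ the size of a block. Each product has $\abs{\pi} \leq m \leq r$ factors, each a logarithmic derivative $A^{(\abs{b})}/A$ with $1 \leq \abs{b} \leq r$, hence of modulus at most $L$ by the previous step; so each product is bounded by $L^{\abs{\pi}} \leq L^r$ (here we use $L \geq 1$). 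The combinatorial prefactor is at most $(r-1)!$ and the number of partitions is at most the Bell number $B_r$, so $\norm{(\log A)^{(m)}}_{L^\infty} \leq B_r (r-1)!\, L^r$ for each $1\leq m \leq r$.

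Finally, since $(A'/A)^{(j)} = (\log A)^{(j+1)}$ for $0 \leq j \leq r-1$, summing over $j$ yields $\norm{A'/A}_{\cC^{r-1}} \leq C L^r$ with $C = r!\, B_r$, a constant depending only on the (fixed) integer $r$. There is no serious obstacle here beyond bookkeeping: the two points requiring care are that the hypothesis controls $A^{(k)}$ \emph{at the point $x$ itself} (not merely on a punctured neighbourhood) and that $L \geq 1$, both immediate from the fact that a $\cC^r$ norm dominates the sup norm. If one prefers to avoid invoking Faà di Bruno, the same bound follows by induction on the number of derivatives, using $(A^{(k)}/A)' = A^{(k+1)}/A - (A^{(k)}/A)(A'/A)$ to estimate each $B_k = A^{(k)}/A$ in $\cC^{r-k}$, which again produces only factors bounded by $L$ and a combinatorial constant depending on $r$.
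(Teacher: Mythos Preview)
Your proposal is correct and is essentially the same argument as the paper's: the paper shows by induction that the $q$\textsuperscript{th} derivative of $A'/A$ is a sum of products of factors $A^{(q_i)}/A$ with $q_1+\cdots+q_j=q+1$, then bounds each factor by $L$ and absorbs the combinatorics into a constant depending only on $r$. Your route through $A'/A=(\log A)'$ and Fa\`a di Bruno produces exactly this same structure (indeed, your closing remark about the inductive alternative via $(A^{(k)}/A)'=A^{(k+1)}/A-(A^{(k)}/A)(A'/A)$ \emph{is} the paper's proof), and you are slightly more careful in making explicit that $L\ge 1$, which the paper uses silently when passing from $L^j$ to $L^r$.
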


\begin{proof}
    The \(q\)\textsuperscript{th} derivative of the quotient \(\frac{A^\prime}{A}\) is equal to a sum of terms of the form
    \[
        \frac{A^{(q_1)}}{A} \cdot \frac{A^{(q_2)}}{A} \cdots \frac{A^{(q_j)}}{A}
    \]  
    where the \(q_1 + q_j + \cdots + q_j = q+1\).
    That this is true for \(q=0\) is immediate. 
    If we assume that the statement is true for \(q\), then we can observe that differentiating only produces terms of the same type. 
    Consequently \(\norm{{A^\prime}/{A}}_{\cC^{r-1}}\) is bounded by \(L^r\) multiplied by a combinatorial constant \(C\) which depends only on \(r\).
\end{proof}

Lemma~\ref{lem:form-of-A} proved the existence of the \(A_{l,p,n}\) and, from the formula~\eqref{eq:A_lpn} for \(A_{p+1,p+1,n}\), the fact that \(A_{p,p,n}  = {((T^n)')}^{-p}\).
To complete the proof of Lemma~\ref{lem:super-Da}, it remains to show that there exists \(C>0\) such that, for \(\norm{A_{l,p,n}}_{L^\infty}\le C\), in particular that the bound is independent of \(n\).

Let \(n\in \bN_0\), \(\omega \in \Omega_n\), \(x\in\omega\).
Since \({\left. T^n \right|}_{\omega}\) is invertible,
\begin{equation}
    \label{eq:def-B}
    B_{l,p,n,\omega} = A_{l,p,n}\circ {\left. T^n \right|}_{\omega}^{-1}
\end{equation}
is well defined on \(\sT^n \omega\).

\begin{lemma}
    \label{lem:control-B}
    Let \(0\leq p \leq r\).
    There exists \(C_p > 0 \) such that,
    for all \(n \in \bN_0\), \(\omega \in \Omega_n\), \(0\leq l \leq p\),
    \[
        \norm{B_{l,p,n,\omega}}_{\cC^{r-p}(\sT^n \omega)} \leq C_p.
    \]
\end{lemma}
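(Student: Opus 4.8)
The plan is to conjugate the recursion~\eqref{eq:A_lpn} of Lemma~\ref{lem:form-of-A} by the inverse branch $g = g_{n,\omega} := {(T^{n}|_{\omega})}^{-1} : T^{n}\omega \to \omega$, and then to induct on $p$. Write $\psi_{n,\omega} := \varphi_{n}\circ g$. Since $\tfrac{1}{(T^{n})'}\circ g = g'$, since $\bigl(\tfrac{A_{l,p,n}'}{(T^{n})'}\bigr)\circ g = (A_{l,p,n}'\circ g)\cdot g' = (A_{l,p,n}\circ g)' = B_{l,p,n,\omega}'$ by the chain rule, and since $g'\cdot\bigl(\tfrac{\varphi_{n}'}{\varphi_{n}}\circ g\bigr) = \tfrac{\psi_{n,\omega}'}{\psi_{n,\omega}}$, composing the three identities~\eqref{eq:A_lpn} with $g$ turns them into
\[
    B_{l,p+1,n,\omega} = B_{l,p,n,\omega}' + B_{l,p,n,\omega}\cdot\tfrac{\psi_{n,\omega}'}{\psi_{n,\omega}} + B_{l-1,p,n,\omega}\cdot g' \qquad (1\le l\le p),
\]
the cases $l=0$ and $l=p+1$ amounting to dropping the last summand, respectively keeping only the last summand, together with $B_{0,0,n,\omega} = 1$. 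The virtue of this form is that the only coefficients are $g'$ and $\tfrac{\psi_{n,\omega}'}{\psi_{n,\omega}}$, and the heart of the matter is that $G := \sup_{n,\omega}\norm{g'}_{\cC^{r-1}(T^{n}\omega)}$ and $D := \sup_{n,\omega}\norm{\smash{\tfrac{\psi_{n,\omega}'}{\psi_{n,\omega}}}}_{\cC^{r-1}(T^{n}\omega)}$ are finite.

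Granting $G, D < \infty$, the lemma follows by induction on $p$, with $C_{p}$ independent of $l$, $n$, $\omega$. For $p=0$ one has $\norm{B_{0,0,n,\omega}}_{\cC^{r}} = 1 =: C_{0}$. For the inductive step, estimate the three summands of the displayed recursion in $\cC^{r-p-1}$: the first is at most $\norm{B_{l,p,n,\omega}}_{\cC^{r-p}}\le C_{p}$ (using $\norm{f'}_{\cC^{m-1}}\le\norm{f}_{\cC^{m}}$); for the other two, a Leibniz bound $\norm{fh}_{\cC^{m}}\le C_{m}\norm{f}_{\cC^{m}}\norm{h}_{\cC^{m}}$ combined with $r-p-1 < r-p$ and $r-p-1\le r-1$ yields bounds $C\,C_{p}D$ and $C\,C_{p}G$. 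Hence $\norm{B_{l,p+1,n,\omega}}_{\cC^{r-p-1}}\le C_{p+1}$ for a suitable $C_{p+1}$. Each step loses exactly one derivative, which is why the statement is phrased with $\cC^{r-p}$; in particular the case $p=r$ delivers the uniform $L^{\infty}$ bound on the $A_{l,p,n}$ still needed to complete Lemma~\ref{lem:super-Da}.

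It remains to produce $G$ and $D$. Since $T$ is $\cC^{\infty}$, uniformly expanding, with finitely many strictly monotone branches, $g$ decomposes as an $n$-fold composition $g = S_{n-1}\circ\cdots\circ S_{0}$ of inverse branches of $T$ with a common contraction rate, and (being strictly monotone on an interval) $g'$ has constant sign, with $\abs{g'} = \prod_{k=0}^{n-1}\abs{S_{k}'}\circ S^{k}$ where $S^{k} := S_{k-1}\circ\cdots\circ S_{0}$. The $\abs{S_{k}'}$ form a finite family of positive $\cC^{\infty}$ functions on compact intervals bounded away from $0$, so have uniformly bounded distortion in the sense required by Lemma~\ref{lem:distort-crux}; applying that lemma with $\theta_{k} = \abs{S_{k}'}$ gives $\norm{g'}_{\cC^{r}(T^{n}\omega)}\le L\abs{g'(x)}\le L\norm{1/(T^{n})'}_{L^{\infty}}$, whence $G < \infty$. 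For $D$, one checks $T^{j}\circ g = {(T^{n-j}|_{T^{j}\omega})}^{-1} = S^{n-j}$, so $\psi_{n,\omega} = \prod_{m=1}^{n}\varphi\circ S^{m}$ and therefore $\tfrac{\psi_{n,\omega}'}{\psi_{n,\omega}} = \sum_{m=1}^{n}(S^{m})'\cdot\bigl(\tfrac{\varphi'}{\varphi}\circ S^{m}\bigr)$ (valid also for complex $\varphi$, being the log-derivative of a product). The same application of Lemma~\ref{lem:distort-crux} bounds $\norm{(S^{m})'}_{\cC^{r-1}}\le L\norm{1/(T^{m})'}_{L^{\infty}}$, which decays geometrically in $m$, while $\tfrac{\varphi'}{\varphi}$ is $\cC^{\infty}$ and bounded on each $\overline{\omega}$ (here $\varphi$ bounded away from zero is used) and $S^{m}$ has uniformly bounded $\cC^{r-1}$ norm, so a standard chain-rule (Faà di Bruno) estimate bounds $\norm{\smash{\tfrac{\varphi'}{\varphi}\circ S^{m}}}_{\cC^{r-1}}$ uniformly in $m$; summing the geometric series gives $D < \infty$. (When $\varphi$ is real and positive one may instead bound $\norm{\abs{\psi_{n,\omega}}}_{\cC^{r}}$ directly by Lemma~\ref{lem:distort-crux} and feed it into Lemma~\ref{lem:diff-quotient}.)

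The main obstacle is exactly this pair of uniform distortion estimates: one must carefully identify $g = {(T^{n}|_{\omega})}^{-1}$ with the composition $S_{n-1}\circ\cdots\circ S_{0}$ and $T^{j}\circ g$ with an inverse branch of $T^{n-j}$ so that $\psi_{n,\omega}$ factors as claimed, and one must verify the hypotheses of Lemma~\ref{lem:distort-crux} with constants uniform over all $\omega\in\Omega_{n}$ and all $n$, which is where the finiteness of $\Omega$, $\cC^{\infty}$-regularity on the compact closures $\overline{\omega}$, uniform expansion, and $\varphi$ bounded away from zero all enter.
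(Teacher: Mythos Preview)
Your proof is correct and follows essentially the same route as the paper: conjugate the recursion of Lemma~\ref{lem:form-of-A} by the inverse branch to obtain the recursion for the $B_{l,p,n,\omega}$ (this is the paper's equation~\eqref{eq:iterate-B}, with your $g'$ and $\psi_{n,\omega}'/\psi_{n,\omega}$ being the paper's $\Theta_n$ and $\Phi_n'/\Phi_n$), then induct on $p$, losing one derivative at each step and invoking Lemma~\ref{lem:distort-crux} for the uniform-in-$n$ control of the coefficients.

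The one genuine difference is in how you bound $D=\sup_{n,\omega}\norm{\psi_{n,\omega}'/\psi_{n,\omega}}_{\cC^{r-1}}$. The paper first applies Lemma~\ref{lem:distort-crux} to obtain the pointwise distortion property for $\Phi_n$ and then feeds this into Lemma~\ref{lem:diff-quotient}; both of those lemmas are stated for positive real functions, so the paper's argument implicitly treats $\varphi$ as positive. You instead expand the logarithmic derivative as $\sum_{m=1}^{n}(S^{m})'\cdot\bigl((\varphi'/\varphi)\circ S^{m}\bigr)$ and bound the series term by term, using Lemma~\ref{lem:distort-crux} only for $(S^{m})'$ together with a Fa\`a di Bruno estimate for the composition. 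This buys you a direct treatment of complex $\varphi$ bounded away from zero, at the cost of a slightly longer computation; the paper's route is shorter but leans on the positivity hypothesis of Lemma~\ref{lem:diff-quotient}. Your identification $T^{j}\circ g=S^{n-j}$ and the resulting factorisation $\psi_{n,\omega}=\prod_{m=1}^{n}\varphi\circ S^{m}$ are correct and are exactly what is needed to make this work.
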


\begin{proof}
    First note that \(B_{0,0,n} = 1\) for all \(n\in \bN_0\) and so the statement of the lemma holds for \(p=0\).

    Proceeding by induction on \(p\), let us suppose that the statement of the lemma is true for \(p\).
    By Lemma~\ref{lem:form-of-A},
    \[
        A_{l,p+1,n}
        = \frac{A_{l,p,n}^{\prime}}{(T^n)'}
        + \frac{A_{l,p,n}}{(T^{n})^{\prime}}
        \cdot \frac{\varphi_{n}^{\prime}}{\varphi_{n}}
        + \frac{A_{l-1,p,n}}{(T^n)'},
    \]
    and consequently,
    \begin{equation}
        \label{eq:iterate-B}
        B_{l,p+1,n,\omega}
        = B_{l,p,n,\omega}^{\prime}
        + B_{l,p,n,\omega} \cdot \left ( {\Phi_n^\prime} / {\Phi_n} \right )
        + B_{l-1,p,n,\omega} \cdot  \Theta_n,
    \end{equation}
    where, for convenience we write \(\Phi_n = \varphi_n \circ {\left. T^n \right|}_{\omega}^{-1}\) and \(\Theta_n = \frac{1}{(T^n)'}\circ {\left. T^n \right|}_{\omega}^{-1}\).
    (In the case \(l=0\) or \(l=p+1\) not all of the three terms are present but this only makes the following estimate easier.)
    Taking the \(\cC^{r-(p+1)}\) norm%
    \footnote{\(\cC^{r}\) is the closure of \(\cC^{\infty}\) with respect to the norm \({\|h\|}_{\cC^{r}} = \sup_{0 \le k \le r}2^{r-k} {\|h^{(k)}\|}_{L^{\infty}}\)and so \((\cC^{r}, {\|\cdot\|}_{\cC^{r}})\) is a Banach algebra.} %
    and using the inductive hypothesis,
    \[
        \norm{B_{l,p+1,n}}_{\cC^{r-(p+1)}}
        \leq
        C_p \left(2 + \norm{{\Phi_n^\prime} / {\Phi_n}}_{\cC^{r}}   +    \norm{ \Theta_n}_{\cC^{r}}  \right).
    \]
    It was assumed that \(\varphi\) is \(\cC^{\infty}(\mathcal{I})\) and bounded away from zero, consequently there exists \(C>0\) such that, for all \(x\), the \(\cC^r\) norm of \(\varphi\) in a neighbourhood of \(x\) is bounded by \(C \varphi(x)\).
    The same property holds also for \(\frac{1}{\sT^\prime}\).
    This means that Lemma~\ref{lem:distort-crux} applies in both cases. 
    This yields directly \(  \norm{ \Theta_n}_{\cC^{r}}  \leq L \). 
    As for the quotient \({\Phi_n^\prime} / {\Phi_n}\), we also apply Lemma~\ref{lem:diff-quotient} for the family of functions \(\Phi_{n}\), which yields \(\norm{{\Phi_n^\prime} / {\Phi_n}}_{\cC^{r}}  \leq CL^{r} \), independently on \(n\).
\end{proof}

Let \(n\in \bN_0\), \(\omega \in \Omega_n\), \(x \in \omega\).
By definition \eqref{eq:def-B},
\( A_{l,p,n}(x) = B_{l,p,n,\omega}(\sT^n x) \).
Lemma~\ref{lem:control-B} provides, in particular, a uniform \(L^{\infty}\) bound on \(B_{l,p,n,\omega}\) and this concludes the proof of Lemma~\ref{lem:super-Da}.


\section*{Acknowledgements} \small 

We thank Carlangelo Liverani for motivating us to consider this problem (by encouraging us to prove what we now know to be impossible) and for many helpful discussions. 
We are grateful to Viviane Baladi, Roberto Castorrini, Marco Lenci, Stefano Luzzatto, David Seifert and Daniel Smania for helpful comments, corrections and suggestions. We also would like to thank the anonymous referees for correcting some mistakes and pointing out relevant missing references.
Partially supported by PRIN Grant ``Regular and stochastic behaviour in dynamical systems" (PRIN 2017S35EHN) and MIUR Excellence Department Project awarded to the Department of Mathematics, University of Rome Tor Vergata, CUP E83C18000100006.

\section*{Statements and Declarations} \small 

The authors have no relevant financial or non-financial interests to disclose.
Data sharing not applicable to this article as no datasets were generated or analysed during the current study.
Also, no llamas were involved in the preparation of this manuscript.

\normalsize


\end{document}